\newtheorem{defi}{Definition}[section] 
\newtheorem{teo}[defi]{Theorem}
\newtheorem{cor}[defi]{Corollary} 
\newtheorem{lemma}[defi]{Lemma}
\newtheorem{prop}[defi]{Proposition}
\newtheorem{oss}[defi]{Remark}
\newcommand{\Pp}{\mathbb{P}}
\newcommand{\R}{\mathbb{R}}
\newcommand{\K}{\kappa}
\newcommand{\dPSL}{\mathbb{P}SL(2,\mathbb{R})\times \mathbb{P}SL(2,\mathbb{R})}
\newcommand{\Sym}{\text{Sym}}
\newcommand{\PSL}{\mathbb{P}SL}
\newcommand{\T}{\mathrm{Teich}}
\newcommand{\dT}{\T(S) \times \T(S)}
\newcommand{\Ima}{\mathrm{Im}}
\newcommand{\Isom}{\mathrm{Isom}}
\newcommand{\codim}{\mathrm{codim}}
\DeclareMathAlphabet{\mathpzc}{OT1}{pzc}{m}{it}
\title[Prescribing metrics on the boundary of Anti-de Sitter 3-manifolds]{Prescribing metrics on the boundary of \\ Anti-de Sitter 3-manifolds} 
\author{Andrea Tamburelli}
\date{\today}
\thanks{}
\begin{document}

\begin{abstract} 
We prove that given two metrics $g_{+}$ and $g_{-}$ with curvature $\kappa <-1$ on a closed, oriented surface $S$ of genus $\tau\geq 2$, there exists an $AdS_{3}$ manifold $N$ with smooth, space-like, strictly convex boundary such that the induced metrics on the two connected components of $\partial N$ are equal to $g_{+}$ and $g_{-}$. Using the duality between convex space-like surfaces in $AdS_{3}$, we obtain an equivalent result about the prescription of the third fundamental form. This answers partially Question 3.5 in \cite{domande}. 
\end{abstract}

\maketitle

\setcounter{tocdepth}{1}
\tableofcontents

\section*{Introduction}\label{Intro}
The $3$-dimensional Anti-de Sitter space $AdS_{3}$ is the Lorentzian analogue of hyperbolic space, i.e. it is the local model of Lorentzian $3$-manifolds with constant sectional curvature $-1$. An $AdS_{3}$ spacetime is an oriented and time-oriented manifold locally modelled on $AdS_{3}$. A particular class of Anti-de Sitter $3$-manifold, called globally hyperbolic maximal compact (GHMC), has attracted much attention since the pioneering work of Mess (\cite{Mess}), who pointed out many connections to Teichm\"uller theory and many similarities to hyperbolic quasi-Fuchsian manifolds. \\
\indent A GHMC $AdS_{3}$ spacetime $M$ is topologically a product $S\times \R$, where $S$ is a closed, oriented surface of genus $\tau\ge 2$, diffeomorphic to a Cauchy surface embedded in $M$. The holonomy representation of the fundamental group of $S$ into the isometry group of orientation and time-orientation preserving isometries of $AdS_{3}$, which can be identified with $\dPSL$, provides a bijection between the space of GHMC $AdS_{3}$ structures on $M$ and the product of two copies of the Teichm\"uller space of $S$. Moreover, $M$ contains a convex core $C(M)$, whose boundary (when $C(M)$ is not a totally geodesic $2$-manifold) consists of the disjoint union of two hyperbolic surfaces pleated along a geodesic lamination.\\
\indent As a consequence, it is possible to formulate many classical questions of quasi-Fuchsian manifolds even in this Lorentzian setting. For example one can ask if it is possible to prescribe the induced metrics on the boundary of the convex core and the answer is very similar in both settings, where the existence of a quasi-Fuchsian manifold (as a consequence of results in \cite{EM} and \cite{Lab}) and a GHMC $AdS_{3}$ manifold (\cite{Diallo}) with a prescribed metric on the boundary of the convex core has been proved, but uniqueness is still unknown. \\
\indent Another interesting question deals with the prescription of the metrics on the boundary of a larger compact, convex subset $K$ with two smooth, strictly convex, space-like boundary components in a GHMC $AdS_{3}$ manifold. By the Gauss formula, the boundaries have curvature $\kappa <-1$. We can ask if it is possible to realize every couple of metrics, satisfying the condition on the curvature, on a surface $S$ via this construction. The analogous question has a positive answer in a hyperbolic setting (\cite{Lab}), where even a uniqueness result holds (\cite{SConvex}). In this paper, we will follow a construction inspired by the work of Labourie (\cite{Lab}), in order to obtain a positive answer in the Anti-de Sitter world. The main result of the paper is thus the following:\\
\\
\indent {\bf Corollary 3.3}  {\it For every couple of metrics $g_{+}$ and $g_{-}$ on $S$ with curvature less than $-1$, there exists a globally hyperbolic convex compact $AdS_{3}$ manifold $K\cong S\times [0,1]$, whose induced metrics on the boundary are exactly $g_{\pm}$.}\\
\\
\indent Using the duality between space-like surfaces in Anti-de Sitter space, we obtain an analogous result about the prescription of the third fundamental form:\\
\\
\indent {\bf Corollary 3.4} {\it For every couple of metrics $g_{+}$ and $g_{-}$ on $S$ with curvature less than $-1$, there exists a globally hyperbolic convex compact $AdS_{3}$ manifold $K \cong S\times [0,1]$, such that the third fundamental forms on the boundary components are $g_{+}$ and $g_{-}$.}\\
\\
We outline here the main steps of the proof for the convenience of the reader. \\
\indent The first observation to be done is that Corollary \ref{main} is equivalent to proving that there exists a GHMC $AdS_{3}$ manifold $M$ containing a future-convex space-like surface isometric to $(S, g_{-})$ and a past-convex space-like surface isometric to $(S, g_{+})$. Adapting the work of Labourie (\cite{Lab}) to this Lorentzian setting, we prove that the space of isometric embeddings $I(S,g_{\pm})^{\pm}$ of $(S, g_{\pm})$ into a GHMC $AdS_{3}$ manifold as a future-convex (or past-convex) space-like surface is a manifold of dimension $6\tau -6$. On the other hand, by the work of Mess (\cite{Mess}), the space of GHMC $AdS_{3}$ structures is parameterised by two copies of Teichm\"uller space, hence a manifold of dimension $12\tau -12$. This allows us to translate our original question into a question about the existence of an intersection between subsets in $\dT$. More precisely, we will define in Section \ref{parametrization} two maps
\[
	\phi^{\pm}_{g_{\pm}}: I(S, g_{\pm})^{\pm} \rightarrow \dT
\]
sending an isometric embedding of $(S, g_{\pm})$ to the holonomy of the GHMC $AdS_{3}$ manifold containing it. Corollary \ref{main} is then equivalent to the following:\\
\\
\indent {\bf Theorem 3.2} {\it For every couple of metrics $g_{+}$ and $g_{-}$ on $S$ with curvature less than $-1$, we have
\[
	\phi^{+}_{g_{+}}(I(S,g_{+})^{+}) \cap \phi^{-}_{g_{-}}(I(S,g_{-})^{-}) \ne \emptyset \  . 
\]}

In order to prove this theorem we will use tools from topological intersection theory, which we recall in Section \ref{intersectiontheory}. For instance, Theorem \ref{mainteo} is already known to hold under particular hypothesis on the curvatures (\cite{landslide2}), hence we only need to check that the intersection persists when deforming one of the two metrics on the boundary, as the space of smooth metrics with curvature less than $-1$ is connected (see e.g. Lemma 2.3 in \cite{casofuchsiano}). More precisely, given any smooth paths of metrics $g^{\pm}_{t}$ with curvature less than $-1$, we will define the manifolds
\[
	 W^{\pm}=\bigcup_{t \in [0,1]} I(S, g^{\pm}_{t})^{\pm}
\]
and the maps
\[
	\Phi^{\pm}: W^{\pm} \rightarrow \dT
\]
with the property that the restrictions of $\Phi^{\pm}$ to the two boundary components coincide with $\phi_{g_{0}^{\pm}}^{\pm}$ and $\phi_{g_{1}^{\pm}}^{\pm}$. We will then prove the following:\\
\\
\indent {\bf Proposition 5.1} {\it The maps $\Phi^{\pm}$ are smooth.}\\
\\
\indent Hence, we will have the necessary regularity to apply tools from intersection theory. In particular, we can talk about transverse maps and under this condition we can define the intersection number (mod $2$) of the maps $\phi_{g_{+}}^{+}$ and $\phi_{g_{-}}^{-}$ as the cardinality (mod $2$), if finite, of $(\phi_{g_{+}}^{+} \times \phi_{g_{-}}^{-})^{-1}(\Delta)$, where
\[
	 \phi_{g_{+}}^{+} \times \phi_{g_{-}}^{-}: I(S, g_{+})^{+} \times I(S, g_{-})^{-} \rightarrow (\T(S))^{2} \times (\T(S))^{2}
\]
and $\Delta$ is the diagonal in $(\T(S))^{2} \times (\T(S))^{2}$. We will compute explicitly this intersection number (see Section \ref{proofmainthm}) under particular hypothesis on the curvatures of $g_{+}$ and $g_{-}$: the reason for this being that the transversality condition is in general difficult to check when the metrics do not have constant curvature. It turns out that in that case the intersection number is $1$.\\
\indent We then start to deform one of the two metrics and check that an intersection persists. Here, one has to be careful that, since the maps are defined on non-compact manifolds, the intersection does not escape to infinity. This is probably the main technical part of the paper and requires results about the convergence of isometric embeddings (Corollary \ref{convergenza}), estimates in Anti-de Sitter geometry (Lemma \ref{stimalaminazioni}) and results in Teichm\"uller theory (Lemma \ref{compactmetric}). In particular, applying these tools, we prove \\
\\
\indent {\bf Proposition 5.13} {\it For every metric $g^{-}$ and for every smooth path of metrics $\{g_{t}^{+}\}_{t \in [0,1]}$ on S with curvature less than $-1$, the set $(\Phi^{+}\times \phi^{-})^{-1}(\Delta)$ is compact}\\
\\
\indent This guarantees that when deforming one of the two metrics the variation of the intersection locus is always contained in a compact set.  The proof of Theorem \ref{mainteo} then follows applying standard argument of topological intersection theory. \\
\\
\indent In Section \ref{halfholonomy}, we study the map
\[
	p_{1} \circ \Phi^{+}: W^{+} \rightarrow \T(S) \ ,
\]
where $p_{1}: \dT \rightarrow \T(S)$ is the projection onto the left factor. The main result we obtain is the following: \\
\\
{\bf Proposition 6.1} {\it Let $g$ be a metric on $S$ with curvature less than $-1$ and let $h$ be a hyperbolic metric on $S$. Then there exists a GHMC $AdS_{3}$ manifold $M$ with left metric isotopic to $h$ containing a past-convex space-like surface isometric to $(S,g)$.}\\
\\
\indent This is proved by showing that $p_{1} \circ \phi^{+}_{g}$ is proper of degree $1$ (mod $2$). Again, we are able to compute explicitly the degree of the map when $g$ has constant curvature and the general statement then follows since for any couple of metrics $g$ and $g'$ with curvature less than $-1$, the maps $p_{1}\circ \phi_{g}$ and $p_{1} \circ \phi_{g'}$ are connected by a proper cobordism.  

\subsection*{Outline of the paper} In Section $\ref{Anti-de Sitter and GHMC Anti-de Sitter manifolds}$ we give a brief introduction to Anti-de Sitter space. We then describe a parameterisation of the space of GHMC $AdS_{3}$ structures in Section \ref{parametrization}. In Section \ref{embeddings} we study the space of isometric embeddings.
In Section \ref{intersectiontheory} we recall the main tools of topological intersection theory. Section \ref{studymaps} contains the most technical proofs: in particular we prove the smoothness and properness of the maps $\Phi^{\pm}$ and Proposition \ref{compact} . Section \ref{halfholonomy} deals with Proposition \ref{halfhol}. The main theorem (Theorem \ref{mainteo}) is proved in Section \ref{proofmainthm}. 

\section{Anti-de Sitter space}\label{Anti-de Sitter and GHMC Anti-de Sitter manifolds}
The $3$-dimensional Anti-de Sitter space $AdS_{3}$ is the Lorentzian analogue of hyperbolic space, i.e. it is the local model of Lorentzian $3$-manifolds with constant sectional curvature $-1$. In this section we describe a geometric model of $AdS_{3}$ as interior of a quadric in the real projective space and illustrate some of its features. We then introduce $AdS_{3}$ manifolds and the notion of globally hyperbolicity. The main references for this material are \cite{Mess} and \cite{folKsurfaces}.\\
\\
\indent Consider in $\R^{4}$ the bilinear form of signature $(2,2)$
\[
	\langle x , y \rangle_{2,2}= x_{1}y_{1}+x_{2}y_{2}-x_{3}y_{3}-x_{4}y_{4} \ \ \ \  \ \ \ x,y \in \R^{4} \ .
\]
We denote with $Q$ the hyperboloid
\[
	Q=\{ x \in \R^4 \ | \ \langle x, x \rangle_{2,2}=-1 \ \} \ .
\]
The restriction of the bilinear form $\langle \cdot, \cdot \rangle_{2,2}$ to the tangent spaces of $Q$ induces a Lorentzian metric on $Q$ with constant sectional curvature $-1$. Geodesics and totally geodesic planes are obtained intersecting $Q$ with planes and hyperplanes of $\R^{4}$ through the origin. Moreover, the group of orientation and time-orientation isometry of $Q$ is the connected component of $SO(2,2)$ containing the identity.\\

We define the Anti-de Sitter space $AdS_{3}$ as the image of the projection of $Q$ into $\R\Pp^{3}$. More precisely, if we denote with $\pi: \R^{4}\setminus \{0\} \rightarrow \R\Pp^{3}$ the canonical projection, we define
\[
	AdS_{3}=\pi(\{ x \in \R^{4} \ | \ \langle x, x \rangle_{2,2}<0 \ \}) \ .
\]
It can be easily verified that $\pi: Q \rightarrow AdS_{3}$ is a double cover, hence we can endow $AdS_{3}$ with the unique Lorentzian structure that makes $\pi$ a local isometry. It then follows by the definition that geodesic and totally geodesic planes of $AdS_{3}$ are obtained intersecting $AdS_{3}$ with projective lines and planes. \\

It is then natural to define the boundary at infinity of $AdS_{3}$ as 
\[
	\partial_{\infty}AdS_{3}=\pi(\{ x \in R^{4} \ | \ \langle x,x \rangle_{2,2}=0 \ \} \ .
\]
It can be easily verified that $\partial_{\infty}AdS_{3}$ coincides with the image of the Segre embedding
\[
	s: \R\Pp^{1} \times \R\Pp^{1} \rightarrow \R\Pp^{3} \ ,
\]
hence the boundary at infinity of Anti-de Sitter space is a double-ruled quadric homeomorphic to $S^{1} \times S^{1}$. We will talk about left and right ruling in order to distinguish the two rulings. This homeomorphism can be also described geometrically in the following way. Fix a totally geodesic space-like plane $P_{0}$ in $AdS_{3}$. The boundary at infinity of $P_{0}$ is a circle. Let $\xi \in \partial_{\infty}AdS_{3}$. There exists a unique line of the left ruling $l_{\xi}$ and a unique line of the right ruling $r_{\xi}$ passing through $\xi$. The identification between $\partial_{\infty}AdS_{3}$ and $S^{1} \times S^{1}$ induced by $P_{0}$ associates to $\xi$ the intersection points $\pi_{l}(\xi)$ and $\pi_{r}(\xi)$ between $l_{\xi}$ and $r_{\xi}$ with the boundary at infinity of $P_{0}$. These two maps
\[
	\pi_{l}: \partial_{\infty}AdS_{3} \rightarrow S^{1} \ \ \ \ \ \ \pi_{r}: \partial_{\infty}AdS_{3} \rightarrow S^{1}
\]
are called left and right projections. \\
\indent The action of an orientation and time-orientation preserving isometry of $AdS_{3}$ extends continuously to the boundary at infinity and it is projective on the two rulings, thus giving an identification between $SO_{0}(2,2)$ and $\dPSL$.\\

The projective duality between points and planes of $\R\Pp^{3}$ induces a duality in $AdS_{3}$ between points and totally geodesic space-like planes. This duality induces then a duality between smooth, space-like convex surfaces in $AdS_{3}$. Namely, let $\tilde{S}\subset AdS_{3}$ be a convex space-like surface. Denote by $\tilde{S}^{*}$ the set of points which are duals to the tangent planes of $\tilde{S}$. The relation between $\tilde{S}$ and $\tilde{S}^{*}$ is summarised in the following lemma.

\begin{lemma}[see e.g. Section 11 of \cite{folKsurfaces}] \label{duality} Let $\tilde{S}\subset AdS_{3}$ be a smooth, space-like surface with curvature $\K <-1$. Then
\begin{enumerate}[(i)]
	\item the dual surface $\tilde{S}^{*}$ is smooth and locally strictly convex;
	\item the pull-back of the induced metric on $\tilde{S}^{*}$ through the duality map is the third fundamental form\footnotemark of $\tilde{S}$;
	\item if $\K$ is constant, the dual surface $\tilde{S}^{*}$ has curvature $\K^{*}=-\frac{\K}{\K+1}$.
\end{enumerate}
\end{lemma}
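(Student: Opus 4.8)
The plan is to realize the duality concretely at the level of the quadric $Q$ and to identify the duality map with the Gauss map. I would lift $\tilde S$ to a spacelike surface in $Q\subset \R^4$ and let $N$ denote a unit timelike normal, so that at each $x\in \tilde S$ one has $\langle x,x\rangle_{2,2}=\langle N,N\rangle_{2,2}=-1$, $\langle x,N\rangle_{2,2}=0$, and $T_x\tilde S=\{x,N\}^{\perp}$ is spacelike. The totally geodesic plane tangent to $\tilde S$ at $x$ is the projectivization of $\mathrm{span}(x)\oplus T_x\tilde S=N^{\perp}$, whose projective polar is the point $[N(x)]$. Since $\langle N,N\rangle_{2,2}=-1<0$, this point lies in $AdS_{3}$, and hence the duality map is (a lift of) the Gauss map $G=N:\tilde S\to Q$, with $\tilde S^{*}=\{[N(x)]:x\in \tilde S\}$.

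First I would differentiate. Writing $D$ for the flat connection of $\R^4$ and using $\langle N,N\rangle_{2,2}=-1$ together with $N\perp T_x\tilde S$, a direct computation shows that $dG(v)=D_vN$ has vanishing components along $x$ and along $N$; thus $D_vN$ is tangent to $\tilde S$ and coincides, up to sign, with the shape operator $B$ defined by $\mathrm{I}(Bv,w)=\mathrm{II}(v,w)$. The Gauss equation for a spacelike surface with timelike normal in $AdS_{3}$ reads $\K=-1-\det B$, so the hypothesis $\K<-1$ is equivalent to $\det B>0$; in particular $B$ is invertible and $G$ is an immersion, proving that $\tilde S^{*}$ is a smooth surface. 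Moreover, since $dG=\pm B$ is tangential, the metric induced on $\tilde S^{*}$ pulls back to $\langle D_vN,D_wN\rangle_{2,2}=\mathrm{I}(Bv,Bw)=\mathrm{III}(v,w)$, which is precisely the third fundamental form of $\tilde S$ and is positive definite; this gives (ii) and shows that $\tilde S^{*}$ is spacelike.

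Next I would exploit that the construction is involutive: the tangent plane to $\tilde S^{*}$ at $N(x)$ is again $\{x,N\}^{\perp}=T_x\tilde S$, with unit normal $x$, so the Gauss map of $\tilde S^{*}$ sends $N(x)\mapsto x$. Differentiating and using $D_vx=v$ shows that the shape operator $B^{*}$ of $\tilde S^{*}$ satisfies $B^{*}=B^{-1}$ (up to sign). Hence $\det B^{*}=(\det B)^{-1}>0$, and $\tilde S^{*}$ is locally strictly convex, completing (i). Finally, when $\K$ is constant, the Gauss equation for $\tilde S$ gives $\det B=-(\K+1)$, and the same equation applied to $\tilde S^{*}$ yields $\K^{*}=-1-\det B^{*}=-1+\frac{1}{\K+1}=-\frac{\K}{\K+1}$, which is (iii).

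The only genuinely delicate points are bookkeeping rather than conceptual: fixing the correct signs in the Lorentzian Gauss equation (the timelike normal produces $\K=-1-\det B$ rather than the Riemannian $\K=-1+\det B$) and verifying that the projective polarity induced by $\langle\cdot,\cdot\rangle_{2,2}$ carries the tangent plane exactly to the normal direction $N$, so that the abstract duality of the statement is the explicit Gauss map. Once these normalizations are pinned down, all three assertions follow from the identification $dG=\pm B$ and two applications of the Gauss equation.
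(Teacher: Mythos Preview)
The paper does not give its own proof of this lemma: it is simply quoted from Section~11 of \cite{folKsurfaces}. Your argument is correct and is essentially the standard proof one finds in that reference, namely identifying the projective duality with the Gauss map $x\mapsto N(x)$ on the quadric model, reading off $dG=\pm B$, and applying the Lorentzian Gauss equation $\kappa=-1-\det B$ twice (once on $\tilde S$ and once on $\tilde S^{*}$ via $B^{*}=B^{-1}$). There is nothing to compare; your sketch would serve perfectly well as a self-contained proof where the paper merely cites one.
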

\footnotetext{We recall that the third fundamental form of an embedded surface $S$ with induced metric $I$ and shape operator $B$ is the bilinear form on $TS$ defined by $III(X,Y)=I(BX,BY)$ for every vector field $X$ and $Y$ on $S$.}  

An $AdS_{3}$ manifold $M$ is a $3$-manifold endowed with a Lorentzian metric locally isometric to $AdS_{3}$. We say that $M$ is globally hyperbolic (GH) if it contains an embedded space-like surface (called Cauchy surface) which intersects every time-like line exactly once. $M$ is said to be spacially compact (C) if it admits a compact Cauchy surface.  This leads to the following definition:
\begin{defi} A globally hyperbolic maximal compact (GHMC) $AdS_{3}$ manifold is a globally hyperbolic $AdS_{3}$ manifold $M$ containing a compact Cauchy surface with the following property: if $i: M \rightarrow M'$ is an isometric embedding of $M$ into another $GHC$ $AdS_{3}$ manifold $M'$ sending a Cauchy surface into a Cauchy surface, then $i$ is an isometry.
\end{defi}

If $M$ is a GHMC $AdS_{3}$-manifold, then its universal cover can be identified with a subset $D$ of $AdS_{3}$ which can be roughly described as follows: the closure of $D$ intersects the boundary at infinity of $AdS_{3}$ along a curve $\rho$ and the interior of $D$ is the set of points such that the boundary at infinity of the dual planes are disjoint from $\rho$. This subset $D=D(\rho)$ is called the domain of dependence of the curve $\rho$. 
\begin{oss} The above description can be made more precise, taking into account the causal property of the curve $\rho$ and its regularity (see e.g. \cite{Mess})  but we will not use these notions in the rest of the paper.
\end{oss}

The duality between smooth space-like surfaces in $AdS_{3}$ induces a similar duality between smooth space-like surfaces in a GHMC $AdS_{3}$ manifold. More precisely, let $S\subset M$ be a smooth, space-like, strictly-convex surface in a GHMC $AdS_{3}$ manifold. The lift of $S$ to the universal cover of $M$ can be identified with a surface $\tilde{S}$ in $AdS_{3}$, invariant under the action of the fundamental group of $S$. The dual surface $\tilde{S}^{*}$ is also invariant, so it corresponds to a surface $S^{*}$ in $M$. Clearly, since the fundamental group of $S$ acts by isometries, an analogue of Lemma \ref{duality} holds.

\section{Equivariant isometric embeddings}\label{embeddings}
 Let $S$ be a connected, compact, oriented surface of genus $\tau \geq 2$ and let $g$ be a Riemannian metric on $S$ with curvature less than $-1$. An isometric equivariant embedding of $S$ into $AdS_{3}$ is given by a couple $(f,\rho)$, where $f:\tilde{S}\rightarrow AdS_{3}$ is an isometric embedding of the universal Riemannian cover of $S$ into $AdS_{3}$ and $\rho$ is a representation of the fundamental group of $S$ into $\dPSL$ such that
\[
	f(\gamma x)=\rho(\gamma)f(x) \ \ \  \forall \ \gamma\in \pi_{1}(S) \ \ \ \forall \ x\in \tilde{S} \ .
\]
 The group $\dPSL$ acts on a couple $(f,\rho)$ by post-composition on the embedding and by conjugation on the representation. We denote by $I(S,g)$ the set of isometric embeddings of $S$ into $AdS_{3}$ modulo the action of $\dPSL$.
\\ 
Also in an Anti-de Sitter setting, an analogue of the Fundamental Theorem for surfaces in the Euclidean space holds:

\begin{teo}There exists an isometric embedding of $(S,g)$ into an $AdS_{3}$ manifold if and only if it is possible to define a $g$-self-adjoint operator $b:TS\rightarrow TS$ satisfying 
	\begin{align*}
	&\det(b)=-\kappa-1 \ \ \  \ \ \ &\text{Gauss equation} \\
	&d^{\nabla}b=0 \ \ \ \ \ \ &\text{Codazzi equation}
	\end{align*}
Moreover, the operator $b$ determines the isometric embedding uniquely, up to global isometries.
\end{teo}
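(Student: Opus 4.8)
This is the Lorentzian counterpart of the classical fundamental theorem of surface theory, so the plan is to follow the flat-connection (moving frame) strategy, with extra care devoted to the signature, which is precisely what produces the particular sign in the Gauss equation.

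First I would treat the necessity. Given an isometric embedding $f:\tilde S\to AdS_{3}$, the space-like hypothesis provides a future-pointing time-like unit normal $\nu$, and the shape operator $b=-\nabla_{\bullet}\nu$ is $g$-self-adjoint because the ambient Levi-Civita connection is metric. Specialising the Gauss and Codazzi equations of a hypersurface in a space of constant curvature $-1$ to this space-like setting, the Gauss equation reads $\kappa=-1-\det(b)$, where the extra minus sign compared with the Riemannian case is exactly the contribution of the time-like normal $\langle\nu,\nu\rangle=-1$; this is the relation $\det(b)=-\kappa-1$. The Codazzi equation becomes $d^{\nabla}b=0$, since for a constant-curvature ambient metric the tangential component of the ambient curvature tensor in the normal direction vanishes.

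For the converse I would use that $G=SO_{0}(2,2)\cong\dPSL$ acts transitively on $AdS_{3}=G/H$, where $H=\Stab(x_{0})$ is the stabiliser of a point, with reductive decomposition $\mathfrak{g}=\mathfrak{h}\oplus\mathfrak{m}$ and $\mathfrak{m}\cong T_{x_{0}}AdS_{3}$. Choosing a local $g$-orthonormal coframe $(e^{1},e^{2})$ on $\tilde S$ with Levi-Civita connection form $\omega_{12}$, I would assemble a $\mathfrak{g}$-valued $1$-form $\omega$ whose $\mathfrak{m}$-part is the solder form determined by $(e^{1},e^{2})$, whose tangential-rotation part is $\omega_{12}$, and whose remaining boost components are the entries of the second fundamental form $II=g(b\,\bullet,\bullet)$. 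A direct computation with the structure equations of $AdS_{3}$ then shows that the Maurer--Cartan equation $d\omega+\tfrac12[\omega,\omega]=0$ holds precisely when $(g,b)$ satisfy $\det(b)=-\kappa-1$ and $d^{\nabla}b=0$. Since $\tilde S$ is simply connected, this flat form integrates to a map $F:\tilde S\to G$, unique up to left translation, with $F^{*}\theta=\omega$ for $\theta$ the Maurer--Cartan form, and $f=\pi\circ F:\tilde S\to AdS_{3}$ is an isometric immersion with first fundamental form $g$ and shape operator $b$; the condition $\kappa<-1$ forces $\det(b)>0$, so $b$ is definite, the surface is strictly convex, and the immersion is an embedding.

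To obtain equivariance I would observe that each deck transformation $\gamma\in\pi_{1}(S)$ is an isometry of $(\tilde S,g)$ preserving $b$, hence $\gamma^{*}\omega=\omega$; by uniqueness of the integration this forces $F\circ\gamma=\rho(\gamma)F$ for a unique $\rho(\gamma)\in G$, and the cocycle relation makes $\rho:\pi_{1}(S)\to G$ a representation, so $(f,\rho)$ descends to an isometric embedding of $S$ into the $AdS_{3}$ manifold with holonomy $\rho$. Uniqueness up to global isometries is then immediate: two embeddings with the same $(g,b)$ produce the same $\omega$, hence integrating maps differing by a fixed left translation, i.e.\ by a global isometry of $AdS_{3}$. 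The main obstacle is the bookkeeping in the middle step: fixing a basis of $\mathfrak{so}(2,2)$ adapted to a space-like frame with a time-like normal and checking that the flatness of $\omega$ is equivalent to the Gauss and Codazzi equations with the correct signs; once this is set up, the simply-connected integration and the passage to the representation $\rho$ are formal.
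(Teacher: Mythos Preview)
The paper does not actually prove this theorem: it is stated as the Lorentzian analogue of the classical fundamental theorem of surface theory and then used as a black box to identify $I(S,g)$ with the space of Codazzi operators satisfying the Gauss equation. So there is no proof in the paper to compare against, and your outline is essentially the standard argument one would give.

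Your treatment of necessity is correct, including the sign bookkeeping: with a time-like unit normal in an ambient space of curvature $-1$ the Gauss equation reads $\kappa=-1-\det(b)$, and the Codazzi equation reduces to $d^{\nabla}b=0$ because the ambient curvature has no tangential-normal component. The sufficiency argument via a flat $\mathfrak{so}(2,2)$-valued connection on $\tilde S$ and integration of the Maurer--Cartan equation is the right mechanism, and the passage to equivariance by pulling back $\omega$ under deck transformations is exactly how one produces the holonomy representation $\rho$.

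One step deserves more care than you give it. You write that since $\det(b)>0$ the surface is strictly convex ``and the immersion is an embedding''. Strict convexity alone does not make an isometric immersion $\tilde S\to AdS_{3}$ injective; one needs a Hadamard-type argument using completeness of $(\tilde S,\tilde g)$ and the causal/convex structure of $AdS_{3}$ (or of a suitable domain of dependence). Alternatively, and more cheaply for the statement as written, you can bypass this entirely: once you have the immersion $f$, pull back the $AdS_{3}$ metric to a tubular neighbourhood of the zero section in the normal bundle of $S$ via the normal exponential map; this produces an $AdS_{3}$ manifold in which $S$ is tautologically embedded with the prescribed data $(g,b)$. That is all the theorem asserts, and it is what the paper actually uses downstream.
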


 This theorem enables us to identify $I(S,g)$ with the space of solutions of the Gauss-Codazzi equations, which can be studied using the classical techniques of elliptic operators.
\begin{lemma}\label{manifold} The space $I(S,g)$ is a manifold of dimension $6\tau -6$.  
\end{lemma}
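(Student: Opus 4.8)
The plan is to use the Fundamental Theorem stated just above to replace $I(S,g)$ with the set $\mathcal{C}$ of $g$-self-adjoint operators $b\colon TS \to TS$ satisfying $\det b = -\K - 1$ and $d^{\nabla}b = 0$, where $\nabla$ is the Levi-Civita connection of $g$; it then suffices to prove that $\mathcal{C}$ is a smooth manifold of dimension $6\tau - 6$. The first step is to decouple the two equations. Writing $b = u\,\mathrm{Id} + b_{0}$ with $b_{0}$ the $g$-trace-free part, the Gauss equation becomes $u^{2} = -\K - 1 - \det b_{0}$; since $\K < -1$ on the closed surface $S$ and $\det b_{0}\le 0$, the right-hand side is bounded below by a positive constant, so $u = \sqrt{-\K - 1 - \det b_{0}}$ is a smooth positive function of $b_{0}$, the two sign choices corresponding to the future/past-convex alternative. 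Thus $\mathcal{C}$ is cut out, inside the space of trace-free self-adjoint operators, by the single first-order equation $d^{\nabla}(u\,\mathrm{Id} + b_{0}) = 0$.

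Second, I would pass to the complex viewpoint. Letting $J$ be the complex structure induced by $g$ and $X = (S,[g])$ the corresponding Riemann surface, the trace-free $g$-self-adjoint operators are exactly those anticommuting with $J$, and the assignment $b_{0}\mapsto q$, where $q$ is the $(2,0)$-part of $g(b_{0}\cdot,\cdot)$, identifies them with smooth quadratic differentials, i.e. sections of $K^{2}$ on $X$. Under this identification the Codazzi equation turns into a Cauchy--Riemann type equation $\bar\partial q = R(q)$, where the remainder $R$ collects the contribution of $d^{\nabla}(u\,\mathrm{Id})$ and therefore depends on $\K$, on the conformal factor of $g$, and on $q$ through $u$. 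Since the principal part is that of $\bar\partial$, the operator $q \mapsto \bar\partial q - R(q)$ is elliptic.

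Third, I would run the elliptic machinery on suitable Hölder (or Sobolev) completions. The goal is to exhibit $\mathcal{C}$ as the zero set of this elliptic operator, to check that $0$ is a regular value (so that $\mathcal{C}$ is a smooth finite-dimensional manifold), and to invoke elliptic regularity to recover smoothness of the solutions. The dimension is then the index of the linearization, which is symbol-determined and hence equal to that of $\bar\partial$ acting on quadratic differentials: its kernel is modelled on the holomorphic quadratic differentials $H^{0}(X,K^{2})$ and its cokernel $H^{1}(X,K^{2})$ vanishes for degree reasons (as $\deg K^{2} = 4\tau - 4 > 2\tau - 2$). By Riemann--Roch $\dim_{\C}H^{0}(X,K^{2}) = 3\tau - 3$, so $\dim_{\R}\mathcal{C} = 2(3\tau - 3) = 6\tau - 6$.

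The main obstacle is the nonlinear coupling introduced by the Gauss equation. Because $u$, and hence $R(q)$, involves the first derivatives of $q$, the subprincipal structure of the equation is genuinely nonlinear, and the delicate points are twofold: first, verifying that substituting $u=\sqrt{-\K-1-\det b_{0}}$ leaves the principal symbol equal to that of $\bar\partial$, so that ellipticity and the index are unaffected; and second, proving surjectivity of the linearized operator at every $b\in\mathcal{C}$ (the regular-value property), since lower-order terms can in principle create a cokernel even while preserving the index, which would obstruct the conclusion that the solution set is a global manifold of dimension exactly $6\tau-6$. The strict inequality $\K<-1$, which keeps $u$ bounded away from $0$, is precisely what makes the relevant estimates uniform over $\mathcal{C}$.
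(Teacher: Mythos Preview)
Your strategy---solve the Gauss constraint for the trace part and view Codazzi as a $\bar\partial$-type equation on quadratic differentials---is reasonable in outline, but as you yourself flag in the final paragraph, two genuine gaps remain, and the second is the heart of the matter.

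First, the symbol issue. When you substitute $u=\sqrt{-\K-1-\det b_{0}}$, the term $d^{\nabla}(u\,\mathrm{Id})$ contributes a genuinely first-order piece in $b_{0}$: the variation $\dot u$ is a pointwise linear functional of $\dot b_{0}$, and then $d^{\nabla}(\dot u\,\mathrm{Id})$ differentiates once more. So the principal symbol of your operator is \emph{not} manifestly that of $\bar\partial$; it is $\bar\partial$ plus a first-order correction depending on $b_{0}$, and you still owe a verification that this modified symbol is injective. The paper sidesteps this entirely by a different parametrization of the Gauss locus: instead of the trace/trace-free splitting, it uses the diffeomorphism $b\mapsto J=J_{0}b/\sqrt{\det b}$ onto the space $\mathpzc{A}$ of complex structures, with inverse $J\mapsto -\sqrt{-\K-1}\,J_{0}J$. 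Since $\sqrt{-\K-1}$ is a fixed function of the background metric $g$ (not of the unknown), the linearized Codazzi operator becomes simply $\dot J\mapsto -J_{0}\,d^{\nabla}\dot J$, whose symbol is transparently that of $\bar\partial$. This change of variables dissolves your first obstacle.

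Second, and more seriously: surjectivity of the linearization. Your third paragraph appeals to $H^{1}(X,K^{2})=0$, but that only kills the cokernel of $\bar\partial$ itself; a lower-order perturbation preserves the index yet can perfectly well create a nonzero cokernel. You acknowledge this in your last paragraph but offer no mechanism to rule it out, and without one you cannot conclude that $0$ is a regular value---you get only that the solution set has dimension \emph{at least} $6\tau-6$ where it is a manifold at all. The paper supplies the missing argument: it writes the formal adjoint $L^{*}$ explicitly and interprets any $\psi\in\ker L^{*}$ as a vector field whose graph in $TS$ is a pseudo-holomorphic curve for a suitable almost-complex structure. Positivity of intersections of pseudo-holomorphic curves then forces the graph of $\psi$ to coincide with the zero section (otherwise their intersection number would be nonnegative, contradicting $\chi(S)<0$), whence $\psi=0$ and the cokernel vanishes at \emph{every} $b$. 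This geometric positivity argument is the idea your proposal is missing.
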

\begin{proof} We can mimic the proof of Lemma $3.1$ in \cite{Lab}. Consider the sub-bundle $\mathpzc{F}^{g}\subset \Sym(TS)$ over $S$ of symmetric operators $b:TS\rightarrow TS$ satisfying the Gauss equation. We prove that the operator
\[
	d^{\nabla}:\Gamma^{\infty}(\mathpzc{F}^{g}) \rightarrow \Gamma^{\infty}(\Lambda^{2}TS\otimes TS)
\]
is elliptic of index $6\tau-6$, equal to the dimension of the kernel of its linearization.\\
Let $J_{0}$ be the complex structure induced by $g$. For every $b\in \Gamma^{\infty}({\mathpzc{F}^{g}})$, the operator
\[
	J=\frac{J_{0}b}{\sqrt{\det(b)}} 
\]
defines a complex structure on $S$. In particular we have an isomorphism 
	\begin{align*}
	F:\Gamma^{\infty}(\mathpzc{F}^{g}) &\rightarrow \mathpzc{A} \\
		b &\mapsto \frac{J_{0}b}{\sqrt{\det(b)}}
	\end{align*}
between smooth sections of the sub-bundle $\Gamma^{\infty}(\mathpzc{F}^{g})$ and the space $\mathpzc{A}$ of complex structures on $S$, with inverse
	\begin{align*}
	F^{-1}:\mathpzc{A} &\rightarrow \Gamma^{\infty}(\mathpzc{F}^{g})  \\
		J &\mapsto -\sqrt{-\K-1}J_{0}J \ .
	\end{align*}
This allows us to identify the tangent space of $\Gamma(\mathpzc{F}^{g})$ at $b$ with the tangent space of $\mathpzc{A}$ at $J$, which is the vector space of operators $\dot{J}: TS \rightarrow TS$ such that $\dot{J}J+J\dot{J}=0$. Under this identification the linearization of $d^{\nabla}$ is given by 
\[
	L(\dot{J})=-J_{0}(d^{\nabla}\dot{J}) \ .
\]
We deduce that $L$ has the same symbol and the same index of the operator $\overline{\partial}$, sending quadratic differentials to vector fields. Thus $L$ is elliptic with index $6\tau-6$. \\
To conclude we need to show that its cokernel is empty, or, equivalently, that its adjoint $L^{*}$ is injective. If we identify $\Lambda^{2}TS\otimes TS$ with $TS$ using the metric $g$, the adjoint operator $L^{*}$ is given by (see Lemma 3.1 in \cite{Lab} for the computation)
\[
	(L^{*}\psi)(u)=-\frac{1}{2}(\nabla_{J_{0}u}\psi+J\nabla_{J_{0}Ju}\psi) \ .
\]
The kernel of $L^{*}$ consists of all the vector fields $\psi$ on $S$ such that for every vector field $u$ 
\[
	J\nabla_{u}\psi=-\nabla_{J_{0}JJ_{0}u}\psi \ .
\]
We can interpret this equation in terms of intersection of pseudo-holomorphic curves: the Levi-Civita connection $\nabla$ induces a decomposition of $T(TS)$ into a vertical $V$ and a horizontal $H$ sub-bundle. We endow $V$ with the complex structure $J$, and $H$ with the complex structure $-J_{0}JJ_{0}$. In this way, the manifold $TS$ is endowed with an almost-complex structure and the graph of $\psi$ is a pseudo-holomorphic curve. Since pseudo-holomorphic curves have positive intersections, if the graph of $\psi$ did not coincide with the graph of the null section, their intersection would be positive. On the other hand, it is well-known that this intersection coincides with the Euler characteristic of $S$, which is negative. Hence, we conclude that $\psi$ is identically zero and that $L^{*}$ is injective. 
\end{proof}

 Similarly, we obtain the following result:
\begin{lemma}\label{manifold2}Let $\{g_{t}\}_{t\in [0,1]}$ be a differentiable curve of metrics with curvature less than $-1$. The set
\[
	W=\bigcup_{t\in [0,1]}I(S,g_{t}) 
\]
is a manifold with boundary of dimension $6\tau-5$.
\end{lemma}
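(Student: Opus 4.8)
The plan is to realise $W$ as the regular zero locus of a single parametrised Codazzi operator and to deduce everything from the surjectivity already established in the proof of Lemma \ref{manifold}. The key observation is that the previous lemma did not merely compute the index $6\tau-6$ of the fibrewise linearisation $L$; it proved that its cokernel vanishes. A vanishing cokernel is exactly the input needed to run the implicit function theorem for the one-parameter family, and it makes the parametrised problem automatically transverse, so that no new analytic estimate is required.

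Concretely, I would first replace the $t$-dependent bundle $\mathpzc{F}^{g_{t}}$ by the fixed space $\mathpzc{A}$ of complex structures on $S$, using the isomorphisms $F_{t}^{-1}\colon \mathpzc{A}\to \Gamma^{\infty}(\mathpzc{F}^{g_{t}})$, $J\mapsto -\sqrt{-\K_{t}-1}\,J_{0}^{t}J$, from Lemma \ref{manifold}, where $\K_{t}$ and $J_{0}^{t}$ denote the curvature and the complex structure of $g_{t}$. After passing to suitable Hölder (or Sobolev) completions so that all spaces become Banach manifolds, define
\[
\sigma\colon [0,1]\times \mathpzc{A}\longrightarrow \Gamma(\Lambda^{2}TS\otimes TS), \qquad \sigma(t,J)=d^{\nabla_{t}}\bigl(F_{t}^{-1}(J)\bigr),
\]
where $\nabla_{t}$ is the Levi-Civita connection of $g_{t}$. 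By the Fundamental Theorem and the identification carried out in Lemma \ref{manifold}, one has $\sigma^{-1}(0)\cong W$, with the slice over $t$ equal to $I(S,g_{t})$. Using the fixed space $\mathpzc{A}$ instead of the $t$-varying bundle $\mathpzc{F}^{g_{t}}$ factors the interval out cleanly: $[0,1]\times\mathpzc{A}$ is a manifold with boundary $\{0,1\}\times\mathpzc{A}$, and since $\{g_{t}\}$ is a differentiable family, $\sigma$ is a smooth map between Banach manifolds.

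The heart of the argument is that $0$ is a regular value of $\sigma$ and of its restriction to the boundary. At a zero $(t_{0},J_{0})$, the partial derivative of $\sigma$ in the $\mathpzc{A}$-direction is, up to the smooth conjugation by $F_{t_{0}}$, exactly the operator $L$ of Lemma \ref{manifold}, whose cokernel was shown to be trivial; hence it is already surjective, and a fortiori the full differential $d\sigma_{(t_{0},J_{0})}$ is surjective. The same computation on each slice $\{0\}\times\mathpzc{A}$ and $\{1\}\times\mathpzc{A}$ shows that $0$ is also a regular value of $\sigma|_{\partial}$. The implicit function theorem for Fredholm maps on manifolds with boundary then yields that $W=\sigma^{-1}(0)$ is a manifold with boundary $I(S,g_{0})\sqcup I(S,g_{1})$. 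For the dimension, the $\mathpzc{A}$-partial operator is Fredholm of index $6\tau-6$ with full rank, so adjoining the interval direction makes $\sigma$ Fredholm of index $6\tau-6+1=6\tau-5$; being a submersion onto $0$, its zero locus has dimension equal to the index, namely $6\tau-5$. Equivalently, the extra kernel direction shows that the projection $W\to[0,1]$ is a submersion with fibres $I(S,g_{t})$.

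The main obstacle is not conceptual but functional-analytic bookkeeping: one must choose completions in which $\mathpzc{A}$ is a Banach manifold, check that $t\mapsto(\nabla_{t},\K_{t},J_{0}^{t})$ depends smoothly on the parameter so that $\sigma$ is genuinely smooth up to the boundary, and confirm by elliptic regularity that the solutions produced lie in $\Gamma^{\infty}$ rather than merely in the completion. None of these steps requires ideas beyond those of Lemma \ref{manifold}; the only genuinely new verification, the transversality of the parametrised operator, comes for free precisely because the cokernel of $L$ was proved to vanish.
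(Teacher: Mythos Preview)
Your proposal is correct and follows essentially the same approach as the paper: both realise $W$ as the zero set of a parametrised Codazzi operator and invoke the implicit function theorem for Fredholm maps, with the crucial input being the vanishing of the cokernel of $L$ established in Lemma~\ref{manifold}. The only cosmetic difference is that the paper packages the parameter by working with a bundle $\mathpzc{F}$ over $S\times[0,1]$ and its section space, whereas you factor the interval out explicitly via the fixed space $\mathpzc{A}$ and the isomorphisms $F_{t}^{-1}$; your version is somewhat more explicit about why surjectivity is automatic and about the functional-analytic caveats, but the underlying argument is the same.
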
 
\begin{proof} Again we can mimic the proof of Lemma $3.2$ in \cite{Lab}. Consider the sub-bundle $\mathpzc{F}\subset \Sym(TS)$ over $S\times[0,1]$ of symmetric operators, whose fiber over a point $(x,t)$ consists of the operators $b:TS \rightarrow TS$, satisfying the Gauss equation with respect to the metric $g_{t}$. The same reasoning as for the previous lemma shows that
\[
	d^{\nabla}:\Gamma^{\infty}(\mathpzc{F}) \rightarrow \Gamma^{\infty}(\Lambda^{2}TS\otimes TS)
\]
is Fredholm of index $6\tau-5$. Since $W=(d^{\nabla})^{-1}(0)$, the result follows from the implicit function theorem for Fredholm operators.
\end{proof}

 Let $N$ be a GHMC $AdS_{3}$ manifold endowed with a time orientation, i.e. a nowhere vanishing time-like vector field. Let $S$ be a convex embedded surface in $N$. We say that $S$ is past-convex (resp. future-convex), if its past (resp. future) is geodesically convex. We will use the convention to compute the shape operator of $S$ using the future-directed normal. With this choice if $S$ is past-convex (resp. future-convex) then it has strictly positive (resp. strictly negative) principal curvatures. 

\begin{defi} We will denote with $I(S,g)^{+}$ and $I(S,g)^{-}$ the spaces of equivariant isometric embeddings of $S$ as a past-convex and future-convex surface, respectively.
\end{defi}

\section{Parameterisation of GHMC $AdS_{3}$ manifolds}\label{parametrization}

 In his pioneering work (\cite{Mess}), Mess studied the geometry of GHMC $AdS_{3}$ manifolds, discovering many connections with Teichm\"uller theory. We recall here some of his results, which we are going to use further. \\

 Let $N$ be a GHMC $AdS_{3}$ spacetime, i.e. $N$ is endowed with an orientation and a time-orientation. It contains a space-like Cauchy surface, which is a closed surface $S$ of genus $\tau \geq 2$. It follows that $N$ is diffeomorphic to the product $S\times \R$. It contains a convex core, i.e. a minimal convex subset homotopy equivalent to $N$, which can be either a totally geodesic surface or a topological submanifold homeomorphic to $S\times [0,1]$, whose boundary components are space-like surfaces, naturally endowed with a hyperbolic metric, pleated along measured geodesic laminations.\\
\indent The holonomy representation $\rho$ of the fundamental group $\pi_{1}(N)\cong \pi_{1}(S)$ into the group $\dPSL$ of orientation and time-orientation preserving isometries of $AdS_{3}$ induced by the $AdS_{3}$-structure can be split, by projecting to each factor, into two representations 
\[
	\rho_{l}=p_{1}\circ \rho: \pi_{1}(S) \rightarrow \PSL(2,\R) \ \ \ \rho_{r}=p_{2}\circ \rho: \pi_{1}(S) \rightarrow \PSL(2,\R)
\]
called left and right representations. Mess proved that these have Euler class $|e(\rho_{l})|=|e(\rho_{r})|=2g-2$ and, using Goldman's criterion (\cite{Goldman}), he concluded that they are discrete and faithful representations, and thus their classes represent elements of Teichm\"uller space. Moreover, every couple of points in Teichm\"uller space can be realised uniquely in this way, thus giving a parameterisation of the set of GHMC $AdS_{3}$-structures on $S\times \R$ up to isotopy by the product of two copies of the Teichm\"uller space of $S$ (Prop. 19 and Prop. 20 in \cite{Mess}). We say that a GHMC $AdS_{3}$ manifold $N$ is Fuchsian if its left and right representations represent the same class in Teichm\"uller space. This happens if and only if the convex core of $N$ is a totally geodesic surface.\\
\indent Moreover, the left and right hyperbolic metrics corresponding to the left and right representations can be constructed explicitly starting from space-like surfaces embedded in $N$. Mess gave a description in a non-smooth setting using the upper and lower boundary of the convex core of $N$ as space-like surfaces. More precisely, if $m^{\pm}$ are the hyperbolic metrics on the upper and lower boundary of the convex core and $\lambda^{\pm}$ are the measured geodesic lamination along which they are pleated, the left and right metrics $h_{l}$ and $h_{r}$ are related to $m^{\pm}$ by an earthquake along $\lambda^{\pm}$:
\[
	h_{l}=E_{l}^{\lambda^{+}}(m_{+})=E_{r}^{\lambda^{-}}(m_{-}) \ \ \ \ \ \ 
	h_{r}=E_{r}^{\lambda^{+}}(m_{+})=E_{l}^{\lambda^{-}}(m_{-}) \ .
\]

Later, this description was extended (\cite{SchKra}), thus obtaining explicit formulas for the left and right metric, in terms of the induced metric $I$, the complex structure $J$ and the shape operator $B$ of any strictly negatively curved smooth space-like surface $S$ embedded in $N$. The construction goes as follows. We fix a totally geodesic space-like plane $P_{0}$. Let $\tilde{S}\subset AdS_{3}$ be the universal cover of $S$. Let $\tilde{S}'\subset U^{1}AdS_{3}$ be its lift into the unit tangent bundle of $AdS_{3}$ and let $p:\tilde{S}' \rightarrow \tilde{S}$ be the canonical projection. For any point $(x,v)\in \tilde{S}'$, there exists a unique space-like plane $P$ in $AdS_{3}$ orthogonal to $v$ and containing $x$. We define two natural maps $\Pi_{\infty, l}$ and $\Pi_{\infty,r}$ from $\partial_{\infty}P$ to $\partial_{\infty}P_{0}$, sending a point $x\in \partial_{\infty}P$ to the intersection between $\partial_{\infty}P_{0}$ and the unique line of the left or right foliation of $\partial_{\infty}AdS_{3}$ containing $x$. Since these maps are projective, they extend to hyperbolic isometries $\Pi_{l}, \Pi_{r}:P \rightarrow P_{0}$. Identifying $P$ with the tangent space of $\tilde{S}$ at the point $x$, the pull-backs of the hyperbolic metric on $P_{0}$ by $\Pi_{l}$ and by $\Pi_{r}$ define two hyperbolic metrics on $\tilde{S}$ 
\[
	h_{l}=I((E+JB)\cdot, (E+JB)\cdot)  \ \ \ \ \text{and} \ \ \ \ h_{r}=I((E-JB)\cdot, (E-JB)\cdot)\ .
\]
The isotopy classes of the corresponding metrics on $S$ do not depend on the choice of the space-like surface $S$ and their holonomies are precisely $\rho_{l}$ and $\rho_{r}$, respectively (Lemma 3.16 in \cite{SchKra}). 
\\

 This parameterisation enables us to formulate our original question about the prescription of the metrics on the boundary of a compact $AdS_{3}$ manifold in terms of existence of an intersection of particular subsets of $\dT$.

\begin{defi}\label{defimap1met} Let $g$ be a metric on $S$ with curvature $\K<-1$. We define the maps
	\begin{align*}
	\phi^{\pm}_{g}: I(&S,g)^{\pm} \rightarrow \dT \\
			&b \mapsto (h_{l}(g,b), h_{r}(g,b)):=(g((E+Jb)\cdot, (E+Jb)\cdot), g((E-Jb)\cdot, (E-Jb)\cdot))
	\end{align*}
associating to every isometric embedding of $(S,g)$ the left and right metric of the GHMC $AdS_{3}$ manifold containing it. 
\end{defi}

We recall that we use the convention to compute the shape operator using always the future-oriented normal. In this way, the above formulas hold for both future-convex and past-convex surfaces, without changing the orientation of the surface $S$.\\

We will prove (in Section \ref{proofmainthm}) the following fact, which is the main theorem of the paper:

\begin{teo}\label{mainteo} For every couple of metrics $g_{+}$ and $g_{-}$ on $S$ with curvature less than $-1$, we have
\[
	\phi^{+}_{g_{+}}(I(S,g_{+})^{+}) \cap \phi^{-}_{g_{-}}(I(S,g_{-})^{-}) \ne \emptyset \  . 
\]

 Therefore, there exists a GHMC $AdS_{3}$ manifold containing a past-convex space-like surface isometric to $(S,g_{+})$ and a future-convex space-like surface isometric to $(S, g_{-})$.
\end{teo}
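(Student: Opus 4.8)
The plan is to translate the existence question into a topological intersection problem and then use a homotopy/cobordism argument to reduce the general case to a model case with constant curvature, where the intersection number can be computed explicitly. Concretely, the theorem asserts that the two subsets $\phi^{+}_{g_{+}}(I(S,g_{+})^{+})$ and $\phi^{-}_{g_{-}}(I(S,g_{-})^{-})$ of $\dT$ meet. Since $\dT$ has dimension $12\tau-12$ and each of $I(S,g_{\pm})^{\pm}$ has dimension $6\tau-6$ by Lemma \ref{manifold}, the dimensions add up correctly for a transverse intersection to consist of isolated points. The natural framework is intersection theory mod $2$: one considers the product map
\[
	\phi^{+}_{g_{+}} \times \phi^{-}_{g_{-}}: I(S,g_{+})^{+} \times I(S,g_{-})^{-} \rightarrow \dT \times \dT
\]
and asks whether its image meets the diagonal $\Delta$. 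The intersection number mod $2$ with $\Delta$ is the invariant I would compute, and showing it is nonzero forces a nonempty intersection.

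First I would set up the deformation. Because the space of smooth metrics with curvature less than $-1$ is connected, any two such metrics are joined by a smooth path, and the total spaces $W^{\pm}=\bigcup_{t}I(S,g^{\pm}_{t})^{\pm}$ are manifolds with boundary of dimension $6\tau-5$ by Lemma \ref{manifold2}. The maps $\Phi^{\pm}:W^{\pm}\rightarrow\dT$ restrict on the boundary components to the maps $\phi^{\pm}_{g^{\pm}_{0}}$ and $\phi^{\pm}_{g^{\pm}_{1}}$, so they provide a cobordism between the two boundary intersection problems. The key point of intersection theory is that the mod $2$ intersection number with $\Delta$ is a cobordism invariant, provided the relevant preimage stays compact throughout the deformation. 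This is exactly where Proposition 5.13 enters: it guarantees that $(\Phi^{+}\times\phi^{-})^{-1}(\Delta)$ is compact, so the intersection cannot escape to infinity as the metric is deformed. Granting smoothness of $\Phi^{\pm}$ (Proposition 5.1) and this compactness, the invariance of the intersection number reduces the claim for arbitrary $(g_{+},g_{-})$ to the claim for any single conveniently chosen pair.

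The reduction step is to pick the model pair to be metrics of constant curvature $\kappa<-1$. In that case the isometric embeddings are far more rigid, the dual surfaces have constant curvature by Lemma \ref{duality}(iii), and the existence of the relevant GHMC $AdS_{3}$ manifold is already known (for instance via the landslide/constant-curvature foliation results cited in the introduction, \cite{landslide2}). There one can actually verify transversality and carry out an explicit count, obtaining intersection number equal to $1$ mod $2$. Since $1\neq 0$, the intersection is nonempty in the model case, and by cobordism invariance it remains nonempty for every pair of metrics with curvature less than $-1$. Finally, a point in the intersection is, by the very definition of $\phi^{\pm}_{g_{\pm}}$ and the reconstruction of the left/right metrics, precisely a GHMC $AdS_{3}$ manifold whose left and right holonomies agree for both embeddings, hence a single manifold containing simultaneously a past-convex surface isometric to $(S,g_{+})$ and a future-convex surface isometric to $(S,g_{-})$; this yields the geometric conclusion.

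I expect the main obstacle to be the compactness statement, i.e. controlling the intersection locus under deformation so that no solutions run off to the boundary at infinity of $\dT$. This requires genuine $AdS_{3}$ geometry: uniform estimates on the second fundamental form and the geometry of the convex region (the estimates in Lemma \ref{stimalaminazioni}), compactness properties of families of equivariant isometric embeddings (Corollary \ref{convergenza}), together with a control on how the induced and dual metrics degenerate (Lemma \ref{compactmetric}). A secondary technical difficulty is the transversality needed to make the mod $2$ intersection number well-defined; transversality is hard to verify for general metrics, which is the reason for restricting the explicit computation to the constant-curvature model and transporting the conclusion by cobordism invariance rather than recomputing it pointwise.
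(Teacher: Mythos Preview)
Your proposal is correct and follows essentially the same strategy as the paper: reduce via cobordism invariance (using Proposition~\ref{smoothness} and Proposition~\ref{compact}) to a constant-curvature model pair, and there compute the mod~$2$ intersection number to be~$1$. The paper's execution matches your outline, with the specific choices being the self-dual pair $g'_{\pm}=\tfrac{1}{2}h$ (curvature $-2$), an explicit differential computation for transversality at $(E,-E)$, and a two-step deformation (first $g_{+}$, then $g_{-}$, each time holding the other metric fixed) so that Proposition~\ref{compact} applies at each stage.
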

\begin{cor}\label{main} For every couple of metrics $g_{+}$ and $g_{-}$ on $S$ with curvature less than $-1$, there exists a globally hyperbolic convex compact $AdS_{3}$ manifold $K\cong S\times [0,1]$, whose induced metrics on the boundary are exactly $g_{\pm}$.
\end{cor}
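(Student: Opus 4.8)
The plan is to deduce the statement directly from Theorem \ref{mainteo}, whose conclusion already produces a single GHMC $AdS_{3}$ manifold $M$ containing a past-convex space-like surface $\Sigma_{+}$ isometric to $(S,g_{+})$ and a future-convex space-like surface $\Sigma_{-}$ isometric to $(S,g_{-})$. Since $M\cong S\times \R$ is time-oriented and both surfaces are Cauchy surfaces, the natural candidate for $K$ is the region ``between'' them, with $\Sigma_{+}$ as its future boundary and $\Sigma_{-}$ as its past boundary. Concretely I would set
\[
	K=\overline{I^{-}(\Sigma_{+})}\cap \overline{I^{+}(\Sigma_{-})},
\]
the intersection of the causal past of $\Sigma_{+}$ with the causal future of $\Sigma_{-}$. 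The induced metrics on the two boundary components are then $g_{+}$ and $g_{-}$ by construction, so everything reduces to checking that $K$ is a compact, convex, globally hyperbolic manifold diffeomorphic to $S\times[0,1]$.

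The crux, and the step I expect to be the main obstacle, is a \emph{sandwich lemma}: the future-convex surface $\Sigma_{-}$ lies strictly in the past of the past-convex surface $\Sigma_{+}$. The two isometric embeddings furnished by Theorem \ref{mainteo} are a priori unrelated and could intersect; only once this ordering is established do they cobound a region of product type. I would prove it by a maximum-principle argument. Recall that, with the future-directed normal, $\Sigma_{+}$ has shape operator $B_{+}>0$ and $\Sigma_{-}$ has shape operator $B_{-}<0$, both strictly definite since the metrics have curvature $\K<-1$.

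Suppose for contradiction that some point of $\Sigma_{-}$ lies in the future of $\Sigma_{+}$. Consider on $\Sigma_{-}$ the Lorentzian time-separation from $\Sigma_{+}$; by compactness of $S$ and global hyperbolicity this function attains a positive maximum $\ell$ at a point $p$, realized by a timelike geodesic $\gamma$ orthogonal to $\Sigma_{+}$ at its past endpoint and orthogonal to $\Sigma_{-}$ at $p$. Flowing $\Sigma_{+}$ to the future by the normal exponential map for time $\ell$ produces an equidistant surface $\Sigma_{+}^{\ell}$ passing through $p$ and tangent there to $\Sigma_{-}$, with $\Sigma_{-}$ lying locally in its past (otherwise $p$ would not maximize the separation). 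Comparing the Hessians of the two tangent graphs at $p$ yields $B_{\Sigma_{+}^{\ell}}(p)\le B_{-}(p)<0$. On the other hand, since $\gamma$ is maximizing it contains no focal point of $\Sigma_{+}$ before $p$, so the Riccati equation along $\gamma$, with the constant curvature $-1$ of $AdS_{3}$, keeps the shape operator of the equidistant surfaces positive definite up to $p$, i.e. $B_{\Sigma_{+}^{\ell}}(p)>0$. This contradiction proves the lemma; in particular $\Sigma_{+}$ and $\Sigma_{-}$ are disjoint.

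Granting the sandwich lemma, the conclusion is routine. The region $K$ between two disjoint space-like Cauchy surfaces with $\Sigma_{-}$ in the past of $\Sigma_{+}$ is compact and diffeomorphic to $S\times[0,1]$ (by flowing along a time function of $M$), its boundary is $\Sigma_{+}\sqcup\Sigma_{-}$, and it inherits global hyperbolicity from $M$, any intermediate Cauchy surface serving as a Cauchy surface for $K$. Finally $K$ is convex because it is the intersection of the two geodesically convex sets $\overline{I^{-}(\Sigma_{+})}$ and $\overline{I^{+}(\Sigma_{-})}$, which are convex precisely because $\Sigma_{+}$ is past-convex and $\Sigma_{-}$ is future-convex. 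Thus $K$ is a globally hyperbolic convex compact $AdS_{3}$ manifold with the prescribed induced boundary metrics $g_{\pm}$, as required.
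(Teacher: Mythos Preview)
Your proposal is correct. The paper states Corollary \ref{main} without proof, treating the passage from Theorem \ref{mainteo} to the compact convex region $K$ as immediate, so you are supplying an argument the paper omits. Your maximum-principle/Riccati proof of the sandwich lemma (that the future-convex $\Sigma_-$ lies strictly in the past of the past-convex $\Sigma_+$) is sound: in $AdS_3$ the Riccati equation for the shape operator along the future normal flow is $B'=I+B^2$, so starting from $B_+>0$ the equidistant surfaces keep $B>0$ up to the first focal point, and the Hessian comparison at the tangency point then yields the desired contradiction. A shorter route, using tools already in the paper, is to invoke the $\K$-time foliation of \cite{folKsurfaces} (as in the proof of Corollary \ref{convergenza}): any strictly future-convex surface with curvature $<-1$ lies in the past of $\partial_-C(M)$ and any strictly past-convex one lies in the future of $\partial_+C(M)$, so $\Sigma_-$ and $\Sigma_+$ are separated by the convex core and the region between them is automatically compact, convex, and of product type.
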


If we apply the previous corollary to the dual surfaces (introduced in Section \ref{Anti-de Sitter and GHMC Anti-de Sitter manifolds}), we obtain an analogue result about the prescription of the third fundamental form:
\begin{cor}\label{III} For every couple of metrics $g_{+}$ and $g_{-}$ on $S$ with curvature less than $-1$, there exists a compact $AdS_{3}$ manifold $K\cong S\times [0,1]$, whose induced third fundamental forms on the boundary are exactly $g_{\pm}$. 
\end{cor}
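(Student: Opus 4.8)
The plan is to derive Corollary~\ref{III} from Corollary~\ref{main} by dualising the two boundary components of the compact manifold produced there. First I would apply Corollary~\ref{main} to the given pair $g_{+},g_{-}$, obtaining a GHMC $AdS_{3}$ manifold $N$ together with a convex compact domain $K\cong S\times[0,1]$ inside it, whose boundary consists of two smooth, space-like, strictly convex surfaces $S_{+}$ and $S_{-}$ carrying induced metrics $g_{+}$ and $g_{-}$ respectively. Since these metrics have curvature $\K<-1$, the surfaces $S_{\pm}$ fall within the hypotheses of Lemma~\ref{duality}.

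Next I would pass to the universal cover, lifting $S_{\pm}$ to $\pi_{1}(S)$-invariant surfaces $\tilde{S}_{\pm}\subset AdS_{3}$, and form their dual surfaces $\tilde{S}_{\pm}^{*}$. As $\pi_{1}(S)$ acts by isometries, the duality map is equivariant, so the dual surfaces are again invariant and descend to surfaces $S_{\pm}^{*}$ in $N$; by Lemma~\ref{duality}(i) they are smooth, space-like and locally strictly convex, hence their shape operators are invertible and their third fundamental forms are well defined. The crucial point is that the duality interchanges the induced metric and the third fundamental form. Indeed, if $\tilde{S}_{\pm}$ has first fundamental form $I$ and shape operator $B$, then the dual surface carries first fundamental form $I(B\cdot,B\cdot)$ and shape operator $B^{-1}$ (the former statement being exactly Lemma~\ref{duality}(ii), the latter a standard feature of this duality, consistent with the curvature relation in Lemma~\ref{duality}(iii)), so that its own third fundamental form equals $I(BB^{-1}\cdot,BB^{-1}\cdot)=I$. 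Equivalently, the duality is involutive and the relation in Lemma~\ref{duality}(ii) is symmetric in $\tilde{S}_{\pm}$ and $\tilde{S}_{\pm}^{*}$. In either formulation the third fundamental forms of $S_{+}^{*}$ and $S_{-}^{*}$ come out to be exactly $g_{+}$ and $g_{-}$.

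Finally I would verify that $S_{+}^{*}$ and $S_{-}^{*}$ bound a compact region $K^{*}\cong S\times[0,1]$ in $N$, which is then the required manifold (note that the statement only asks for a \emph{compact} $AdS_{3}$ manifold, so no global hyperbolicity or convexity of $K^{*}$ itself need be established). The natural candidate is the region $K^{*}$ dual to $K$, whose boundary is formed by the points dual to the tangent planes of $S_{+}$ and $S_{-}$, so that $\partial K^{*}=S_{+}^{*}\cup S_{-}^{*}$. I expect the main, though modest, obstacle to lie precisely in this last step: checking the relative position and causal character of the two dual surfaces, i.e. that they are disjoint, space-like and oriented so that the region between them is a product neighbourhood of a Cauchy surface in $N$. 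By contrast, the exchange $I\leftrightarrow III$ underlying the computation of the third fundamental forms is entirely contained in Lemma~\ref{duality}, together with the invertibility of $B$ guaranteed by $\K<-1$, and requires no further work.
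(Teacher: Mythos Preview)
Your proposal is correct and follows the same route as the paper, which simply says to ``apply the previous corollary to the dual surfaces'': you produce, via Corollary~\ref{main}, two strictly convex boundary surfaces with induced metrics $g_{\pm}$, and then dualise them so that, by Lemma~\ref{duality}(ii) and the involutivity of the duality, the dual surfaces carry third fundamental forms $g_{\pm}$. Your additional care about the product structure of the region bounded by $S_{+}^{*}$ and $S_{-}^{*}$ goes beyond what the paper spells out, but the underlying argument is the same.
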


\section{Topological intersection theory}\label{intersectiontheory}
\indent As outlined in the Introduction, the main tool used in the proof of the main theorem is the intersection theory of smooth maps between manifolds, which is developed for example in \cite{trasversalità}. We recall here the basic constructions and the fundamental results.\\
\\
\indent If not otherwise stated, all manifolds considered in this section are non-compact without boundary.\\
\indent  Let $X$ and $Z$ be manifolds of dimension $m$ and $n$, respectively and let $A$ be a closed submanifold of $Z$ of codimension $k$. Suppose that $m-k \geq 0$. We say that a smooth map $f:X \rightarrow Z$ is transverse to $A$ if for every $z\in \Ima(f)\cap A$ and for every $x\in f^{-1}(z)$ we have
\[
	df(T_{x}X) + T_{z}A=T_{z}Z \ .
\]
Under this hypothesis, $f^{-1}(A)$ is a submanifold of $X$ of codimension $k$. \\
When $k=m$ and $f^{-1}(A)$ consists of a finite number of points we define the intersection number between $f$ and $A$ as
\[
	\Im(f,A):= |f^{-1}(A)| \ \ \ \ \ \ \ (\text{mod}\ 2) \ .
\]

\begin{oss}\label{defdegree} When $A$ is a point $p\in Z$, $f$ is transverse to $p$ if and only if $p$ is a regular value for $f$. Moreover, if $f$ is proper, $f^{-1}(p)$ consists of a finite number of points and the above definition coincides with the classical definition of degree (mod $2$) of a smooth and proper map.
\end{oss}

 We say that two smooth maps $f: X \rightarrow Z$ and $g: Y \rightarrow Z$ are transverse if the map
\[
	f\times g : X \times Y \rightarrow Z \times Z
\]
is transverse to the diagonal $\Delta \subset Z\times Z$. Notice that if $\Ima(f) \cap \Ima(g)=\emptyset$, then $f$ and $g$ are transverse by definition.\\
Suppose now that $2\dim X=2\dim Y=\dim Z$. Moreover, suppose that the maps $f:X \rightarrow Z$ and $g: Y \rightarrow Z$ are transverse and the preimage $(f\times g)^{-1}(\Delta)$ consists of a finite number of points. We define the intersection number between $f$ and $g$ as
\[
	\Im(f,g):=\Im(f\times g, \Delta)=|(f\times g)^{-1}(\Delta)| \ \ \ \ \ \ \ (\text{mod}\  2) \ .
\]
It follows by the definition that if $\Im(f,g)\ne 0$ then $\Ima(f)\cap \Ima(g) \neq \emptyset$.\\

 One important feature of the intersection number that we will use further is the invariance under cobordism. We say that two maps $f_{0}:X_{0} \rightarrow Z$ and $f_{1}: X_{1} \rightarrow Z$ are cobordant if there exists a manifold $W$ and a smooth function $F:W \rightarrow Z$ such that $\partial W=X_{0}\cup X_{1}$ and $F_{|_{X_{i}}}=f_{i}$. 

\begin{prop}\label{invcobgen} Let $W$ be a non-compact manifold with boundary $\partial W=X_{0}\cup X_{1}$. Let $H: W \rightarrow Z$ be a smooth map and denote by $h_{i}$ the restriction of $H$ to the boundary component $X_{i}$ for $i=0,1$. Let $A \subset Z$ be a closed submanifold. 
Suppose that
\begin{enumerate}[(i)]
\begin{item} $\codim A=\dim X_{i}$;
\end{item}
\begin{item} $H$ is transverse to $A$;
\end{item}
\begin{item} $H^{-1}(A)$ is compact.
\end{item}
\end{enumerate}
Then $\Im(h_{0},A)=\Im(h_{1},A)$.
\end{prop}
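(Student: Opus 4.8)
\emph{Proof proposal.} The plan is to run the classical cobordism argument for the mod-$2$ intersection number; the one feature that requires care is the non-compactness of $W$, which is exactly what hypothesis (iii) controls.

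First I would set up the dimensions. Since $X_{0}$ and $X_{1}$ are the boundary components of $W$, we have $\dim W = \dim X_{i}+1$. Hypothesis (ii) together with the manifold-with-boundary version of the preimage theorem (see e.g. \cite{trasversalità}) then makes $H^{-1}(A)$ a submanifold with boundary of $W$ of codimension $\codim A$, so that
\[
	\dim H^{-1}(A)=\dim W - \codim A = (\dim X_{i}+1)-\dim X_{i}=1 \ ;
\]
that is, $H^{-1}(A)$ is a $1$-manifold with boundary. Here one needs transversality of $H$ \emph{and} of its restriction $H|_{\partial W}=h_{0}\sqcup h_{1}$ to $A$; the latter is part of the data, since it is precisely the condition under which the intersection numbers $\Im(h_{i},A)$ are defined. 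The same statement applied to each $h_{i}:X_{i}\rightarrow Z$ shows that $h_{i}^{-1}(A)$ is a $0$-dimensional submanifold of $X_{i}$, hence a discrete set of points.

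Next I would feed in hypothesis (iii). Compactness of $H^{-1}(A)$ makes it a \emph{compact} $1$-manifold with boundary, whose boundary the preimage theorem identifies as
\[
	\partial\bigl(H^{-1}(A)\bigr)=H^{-1}(A)\cap \partial W = h_{0}^{-1}(A)\sqcup h_{1}^{-1}(A) \ ;
\]
in particular each $h_{i}^{-1}(A)$ is finite, so the two intersection numbers are indeed well defined. The topological heart of the argument is the classification of compact $1$-manifolds: any compact $1$-manifold with boundary is a finite disjoint union of circles and closed arcs, and hence has an even number of boundary points. Applying this to $H^{-1}(A)$ gives
\[
	|h_{0}^{-1}(A)|+|h_{1}^{-1}(A)|=\bigl|\partial(H^{-1}(A))\bigr|\equiv 0 \pmod 2 \ ,
\]
so $|h_{0}^{-1}(A)|\equiv |h_{1}^{-1}(A)| \pmod 2$ and hence $\Im(h_{0},A)=\Im(h_{1},A)$, which is the claim.

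The only genuinely delicate point is the use of compactness. Because $W$ is non-compact, without (iii) the preimage $H^{-1}(A)$ could be a non-compact $1$-manifold --- for instance a half-open arc escaping to infinity --- whose boundary is a single point, and then the parity count would fail. Thus the whole proposition hinges on (iii), and in the application this is supplied by Proposition \ref{compact}. I would make sure that compactness enters the argument only through the finiteness of $h_{i}^{-1}(A)$ and the even-endpoint count, and that nowhere is $W$ itself assumed compact.
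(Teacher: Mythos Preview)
Your argument is correct and is precisely the one the paper gives: by transversality and compactness $H^{-1}(A)$ is a compact $1$-manifold with boundary $h_{0}^{-1}(A)\sqcup h_{1}^{-1}(A)$, hence a finite union of circles and arcs, forcing the two boundary counts to have the same parity. Your write-up merely spells out the dimension count and the role of (iii) more explicitly than the paper does.
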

\begin{proof}By hypothesis the pre-image $H^{-1}(A)$ is a compact, properly embedded $1$-manifold, i.e. it is a finite disjoint union of circles and arcs with ending points on a boundary component of $W$. This implies that $h_{0}^{-1}(A)$ and $h_{1}^{-1}(A)$ have the same parity. 
\end{proof}

 In particular, we deduce the following result about the intersection number of two maps:
\begin{cor} \label{invcob} Let $W$ be a non-compact manifold with boundary $\partial W=X_{0}\cup X_{1}$. Let $F: W \rightarrow Z$ be a smooth map and denote by $f_{i}$ the restriction of $F$ to the boundary component $X_{i}$ for $i=0,1$. Let $g:Y \rightarrow Z$ be a smooth map. 
Suppose that
\begin{enumerate}[(i)]
\begin{item} $2\dim X_{i}=2\dim Y=\dim Z$;
\end{item}
\begin{item} $F$ and $g$ are transverse;
\end{item}
\begin{item} $(F\times g)^{-1}(\Delta)$ is compact.
\end{item}
\end{enumerate}
Then $\Im(f_{0},g)=\Im(f_{1},g)$.
\end{cor}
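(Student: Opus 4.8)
The plan is to deduce this corollary directly from Proposition \ref{invcobgen} by passing to the product with $Y$. Concretely, I would set
\[
	H := F \times g : W \times Y \rightarrow Z \times Z
\]
and take $A := \Delta$, the diagonal in $Z \times Z$. The key observation is that $W \times Y$ is again a non-compact manifold with boundary, and since $Y$ has no boundary its boundary decomposes as
\[
	\partial(W \times Y) = (\partial W) \times Y = (X_0 \times Y) \cup (X_1 \times Y) \ .
\]
Writing $\tilde{X}_i := X_i \times Y$, the restriction of $H$ to $\tilde{X}_i$ is exactly $f_i \times g$, so that the conclusion of the corollary is equivalent to $\Im(f_0 \times g, \Delta) = \Im(f_1 \times g, \Delta)$, which by the definition of the intersection number of two maps is precisely $\Im(f_0, g) = \Im(f_1, g)$.

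Next I would verify the three hypotheses of Proposition \ref{invcobgen} for $H$ and $A = \Delta$. For the codimension condition, note that $\Delta \cong Z$ is a closed submanifold of $Z \times Z$ of codimension $\dim Z$, while the assumption $2\dim X_i = 2\dim Y = \dim Z$ gives $\dim \tilde{X}_i = \dim X_i + \dim Y = \dim Z = \codim \Delta$, as required. The transversality of $H$ to $\Delta$ is exactly the hypothesis that $F$ and $g$ are transverse, by the very definition of transversality of two maps recalled in this section. Finally, $H^{-1}(\Delta) = (F \times g)^{-1}(\Delta)$ is compact, which is hypothesis (iii).

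With all three hypotheses in place, Proposition \ref{invcobgen} applied to $H$ and $\Delta$ yields $\Im(H|_{\tilde{X}_0}, \Delta) = \Im(H|_{\tilde{X}_1}, \Delta)$, that is, $\Im(f_0, g) = \Im(f_1, g)$, which is the claim. I do not expect any genuinely hard step here: the entire content of the argument is this reduction, so the only points requiring care are the (elementary) identification of the boundary of the product manifold $W \times Y$ and the matching of the transversality assumption with the definition of transverse maps, both of which are immediate once the dimensions are tracked.
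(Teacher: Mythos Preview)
Your proof is correct and follows exactly the same approach as the paper: apply Proposition \ref{invcobgen} to $H = F \times g : W \times Y \to Z \times Z$ with $A = \Delta$, after checking the boundary, codimension, transversality, and compactness hypotheses. The paper's proof is a one-line version of what you have written out in full.
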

\begin{proof} Apply the previous proposition to the map $H=F\times g: W\times Y \rightarrow Z\times Z$ and to the submanifold $A=\Delta$, the diagonal of $Z\times Z$.
\end{proof}

 The hypothesis of transversality in the previous propositions is not restrictive, as it is always possible to perturb the maps involved on a neighbourhood of the set on which transversality fails:
\begin{teo}[Theorem p.72 in \cite{trasversalità}] \label{perturb} Let $h: W \rightarrow Z$ be a smooth map between manifolds, where only $W$ has boundary. Let $A$ be a closed submanifold of $Z$. Suppose that $h$ is transverse to $A$ on a closed set $C\subset W$. Then there exists a smooth map $\tilde{h}:W \rightarrow Z$ homotopic to $h$ such that $\tilde{h}$ is transverse to $A$ and $\tilde{h}$ agrees with $h$ on a neighbourhood of $C$.
\end{teo}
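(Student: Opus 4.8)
The plan is to deduce this transversality extension result from the Parametric Transversality Theorem together with Sard's theorem, localizing the perturbation by a bump function so that nothing is altered near $C$. First I would record the elementary but essential fact that transversality to $A$ is an \emph{open} condition: the set of points $x \in W$ at which the relation $dh(T_{x}W) + T_{h(x)}A = T_{h(x)}Z$ fails is closed, so the hypothesis that $h$ is transverse to $A$ on the closed set $C$ already guarantees that $h$ is transverse on some open neighbourhood $U \supset C$. I would then fix open sets $C \subset U_{0} \subset \overline{U_{0}} \subset U_{1} \subset \overline{U_{1}} \subset U$ and a smooth function $\gamma : W \to [0,1]$ with $\gamma \equiv 0$ on $U_{0}$ and $\gamma \equiv 1$ outside $U_{1}$, so that $\{\gamma = 0\} \subset U_{1} \subset U$.

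Next, using a Whitney embedding $Z \hookrightarrow \R^{N}$ with a tubular neighbourhood projection $\pi$, I would build a finite-dimensional family of perturbations. Let $B \subset \R^{N}$ be a small ball centred at the origin and define
\[
	H : W \times B \to Z, \qquad H(w,s) = \pi\bigl(h(w) + \gamma(w)\, s\bigr) \ .
\]
The key point is that $H$ is transverse to $A$: wherever $\gamma(w) > 0$, the partial map $s \mapsto h(w) + \gamma(w) s$ is a submersion onto a neighbourhood, so $H$ is a submersion there and is automatically transverse to everything; wherever $\gamma(w) = 0$ we have $w \in U$, so $H(w,s) = h(w)$ is transverse to $A$ by the choice of $U$. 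The same dichotomy applied to the restriction $H|_{\partial W \times B}$ shows the boundary map is transverse to $A$ as well.

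I would then invoke the Parametric Transversality Theorem. Since $H$ is transverse to $A$, the preimage $\Sigma = H^{-1}(A)$ is a submanifold of $W \times B$, and a parameter $s \in B$ is a regular value of the projection $\Sigma \to B$ (and of the corresponding projection $(\partial H)^{-1}(A) \to B$) precisely when $h_{s} := H(\cdot, s)$ and its boundary restriction are transverse to $A$. By Sard's theorem the set of such regular values has full measure in $B$, so I may choose $s_{0} \in B$ arbitrarily small for which $\tilde{h} := h_{s_{0}}$ is transverse to $A$. Shrinking $B$ keeps $h(w) + \gamma(w) s_{0}$ inside the tubular neighbourhood for all $w$, so $t \mapsto H(\cdot, t s_{0})$ is a homotopy from $h$ to $\tilde{h}$; and since $\gamma \equiv 0$ on $U_{0} \supset C$, we have $\tilde{h} = h$ on the neighbourhood $U_{0}$ of $C$, as required.

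I expect the main obstacle to be the Parametric Transversality Theorem itself, that is, the passage from transversality of the whole family $H$ to transversality of almost every slice $h_{s}$; this is exactly where Sard's theorem enters, and some care is needed because $W$ has boundary, so one must apply Sard separately to $\Sigma$ and to $(\partial H)^{-1}(A)$ in order to control both the interior and the boundary transversality of the chosen slice. The localization via $\gamma$ is routine once the openness of transversality is available, but one must verify that inserting $\gamma$ does not destroy transversality of $H$ on the transition region $U_{1} \setminus U_{0}$: there $H$ is a submersion where $\gamma > 0$ and equals the already-transverse map $h$ where $\gamma = 0$, which is precisely why $U_{1}$ must be chosen inside the neighbourhood $U$ on which $h$ is transverse.
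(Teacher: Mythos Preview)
The paper does not supply its own proof of this statement: it is quoted verbatim as ``Theorem p.~72 in \cite{trasversalità}'' (Guillemin--Pollack) and used as a black box. Your outline is precisely the standard Guillemin--Pollack argument for the Extension/Transversality Homotopy Theorem --- openness of transversality to pass from $C$ to an open $U$, a bump function $\gamma$ to localise, a Whitney embedding with tubular projection to build the family $H$, and the Parametric Transversality Theorem plus Sard to extract a good slice --- so there is nothing to compare: you have reproduced the proof the paper is citing.

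One small technical point worth tightening: when $W$ (and hence $h(W)$) is non-compact, the tubular neighbourhood of $Z$ in $\R^{N}$ need not have a uniform positive radius, so ``shrinking $B$'' may not keep $h(w)+\gamma(w)s$ inside the tubular neighbourhood for \emph{all} $w$ simultaneously. The usual remedy is to replace $\gamma(w)\,s$ by $\gamma(w)\,\delta(w)\,s$ for a smooth positive $\delta$ with $\delta(w)$ smaller than the tubular radius at $h(w)$; this preserves the submersion property wherever $\gamma>0$ and the rest of the argument goes through unchanged. This is routine and does not affect the correctness of your approach.
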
 

 Now the question arises whether the intersection number depends on the particular perturbation of the map that we obtain when applying Theorem \ref{perturb}.
\begin{prop}\label{invpert}Let $h:X\rightarrow Z$ be a smooth map between manifolds. Let $A$ be a submanifold of $Z$, whose codimension equals the dimension of $X$. Suppose that $h^{-1}(A)$ is compact. Let $\tilde{h}$ and $\tilde{h}'$ be perturbations of $h$, which are transverse to $A$ and coincide with $h$ outside the interior part of a compact set $B$ containing $h^{-1}(A)$. Then 
\[
 \Im(\tilde{h},A)=\Im(\tilde{h}',A) \ .
\]
\end{prop}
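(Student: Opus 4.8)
The plan is to reduce the statement to the cobordism invariance of the intersection number already recorded in Proposition \ref{invcobgen}. Concretely, I would view the two perturbations $\tilde h$ and $\tilde h'$ as the two boundary restrictions of a single map on the cylinder $W=X\times[0,1]$, whose boundary is $\partial W=(X\times\{0\})\cup(X\times\{1\})$. The goal is to produce a smooth homotopy $H\colon W\to Z$ with $H(\cdot,0)=\tilde h$ and $H(\cdot,1)=\tilde h'$ that is transverse to $A$ and for which $H^{-1}(A)$ is compact; Proposition \ref{invcobgen}, applied with $X_{0}=X\times\{0\}$, $X_{1}=X\times\{1\}$ and the submanifold $A$ (here $\codim A=\dim X=\dim X_{i}$), then yields $\Im(\tilde h,A)=\Im(\tilde h',A)$ immediately.

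First I would build a candidate homotopy that is stationary outside $\inte(B)$. Since $\tilde h$ and $\tilde h'$ are perturbations of $h$ and coincide with $h$ on $X\setminus \inte(B)$, they can be taken $C^{0}$-close over the compact set $B$. Fixing a Whitney embedding $Z\hookrightarrow\R^{N}$ with a tubular neighbourhood $U$ and retraction $r\colon U\to Z$, I consider the affine homotopy $\hat H(x,t)=(1-t)\tilde h(x)+t\,\tilde h'(x)$: over $\inte(B)$ the segments stay in $U$ by closeness, while for $x\notin \inte(B)$ the segment is the single point $h(x)\in Z\subset U$. Thus $H:=r\circ\hat H$ is a smooth homotopy from $\tilde h$ to $\tilde h'$ with $H(x,t)=h(x)$ for all $x\in X\setminus \inte(B)$ and all $t$. (Reparametrising $t$ so that $H$ is independent of $t$ near $t=0,1$ gives the product structure near $\partial W$.)

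Next I would upgrade $H$ to a transverse map without disturbing its behaviour near $\partial W$ and outside $\inte(B)$. Set $C=(X\times\{0,1\})\cup\bigl((X\setminus \inte(B))\times[0,1]\bigr)$, a closed subset of $W$. The map $H$ is already transverse to $A$ along $C$: on the two boundary slices it restricts to $\tilde h$ and $\tilde h'$, transverse to $A$ by hypothesis, while on $(X\setminus \inte(B))\times[0,1]$ one has $H(x,t)=h(x)\notin A$ because $h^{-1}(A)\subset \inte(B)$, so transversality holds vacuously. Applying Theorem \ref{perturb} to $H$ with this $C$, I obtain a homotopic $\tilde H\colon W\to Z$ that is globally transverse to $A$ and agrees with $H$ on a neighbourhood of $C$; in particular $\tilde H(\cdot,0)=\tilde h$, $\tilde H(\cdot,1)=\tilde h'$, and $\tilde H(x,t)=h(x)$ for every $x$ outside $\inte(B)$. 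For such $x$ one has $\tilde H(x,t)=h(x)\notin A$, so $\tilde H^{-1}(A)\subset B\times[0,1]$; being a closed subset (the preimage of the closed submanifold $A$) of the compact set $B\times[0,1]$, it is compact. The same reasoning shows $\tilde h^{-1}(A),(\tilde h')^{-1}(A)\subset B$ are finite, so both intersection numbers are defined, and all three hypotheses of Proposition \ref{invcobgen} hold, giving $\Im(\tilde h,A)=\Im(\tilde h',A)$.

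The step I expect to require the most care is the interplay between the two perturbations: choosing the homotopy $H$ and then invoking Theorem \ref{perturb} so that the perturbation achieving interior transversality alters $H$ neither on the boundary slices, where the prescribed values $\tilde h,\tilde h'$ must be preserved, nor outside $\inte(B)$, where the identity $\tilde H=h$ is exactly what confines $\tilde H^{-1}(A)$ to the compact set $B\times[0,1]$. This is precisely why the clause ``agrees with $h$ on a neighbourhood of $C$'' in Theorem \ref{perturb} is indispensable: without it the properly embedded $1$-manifold $\tilde H^{-1}(A)$ could escape to infinity, fail to be compact, and the counting-of-endpoints argument underlying Proposition \ref{invcobgen} would collapse.
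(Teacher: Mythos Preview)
Your proof is correct and follows essentially the same route as the paper: build a homotopy on $W=X\times[0,1]$ between $\tilde h$ and $\tilde h'$ that is stationary equal to $h$ outside $\inte(B)$, apply Theorem~\ref{perturb} on the closed set $C=\partial W\cup\bigl((X\setminus\inte(B))\times[0,1]\bigr)$ to achieve transversality without altering the boundary data or the behaviour outside $\inte(B)$, and then invoke Proposition~\ref{invcobgen}. The paper's proof simply asserts the existence of such a relative homotopy rather than constructing it via a Whitney embedding and tubular retraction as you do, but the argument is otherwise the same.
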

\begin{proof}Let $\tilde{H}:W=X\times [0,1]\rightarrow Y$ be an homotopy between $\tilde{h}$ and $\tilde{h}'$ such that for every $x\in (X\setminus B)\times [0,1]$ we have $\tilde{H}(x,t)=h(x)$ . Notice that $\tilde{H}^{-1}(A)$ is compact. Up to applying Theorem \ref{perturb} to the closed set $C=(X\setminus B)\times [0,1]\cup \partial W$, we can suppose that $\tilde{H}$ is transverse to $A$. By Proposition \ref{invcobgen}, we have that $\Im(\tilde{h}_{0},A)=\Im(\tilde{h}_{1},A)$ as claimed.  
\end{proof}  

 Moreover, in particular circumstances, we can actually obtain a $1-1$ correspondence between the points of $h_{0}^{-1}(A)$ and $h_{1}^{-1}(A)$. The following proposition will not be used for the proof of the main result of the paper, but it might be a useful tool to prove the uniqueness part of the question addressed in this paper, as explained in Remark \ref{unique?}.
\begin{prop}\label{unico} Under the same hypothesis as Proposition \ref{invcobgen}, suppose that the cobordism $(W, H)$ between $h_{0}$ and $h_{1}$ satisfies the following additional properties:
\begin{enumerate}[(i)] 
\begin{item} $W$ fibers over the interval $[0,1]$ with fiber $X_{t}$;
\end{item}
\begin{item} the restriction $h_{t}$ of $H$ at each fiber is tranverse to $A$ .
\end{item}
\end{enumerate}
Then $|h_{0}^{-1}(A)|=|h_{1}^{-1}(A)|$.
\end{prop}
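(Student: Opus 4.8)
The plan is to refine the cobordism argument used in the proof of Proposition \ref{invcobgen}. Recall from there that, since $H$ is transverse to $A$ and $\codim A=\dim X_i$, the preimage $H^{-1}(A)$ is a compact, properly embedded $1$-manifold whose boundary lies on $\partial W = X_0 \cup X_1$; its connected components are therefore either circles or arcs, and the endpoints of the arcs are exactly the points of $h_0^{-1}(A)\cup h_1^{-1}(A)$. The role of the two extra hypotheses is to upgrade the mod-$2$ count into an honest bijection, by showing that \emph{every} component is an arc running from $X_0$ to $X_1$, so that no $X_0$-to-$X_0$ or $X_1$-to-$X_1$ arcs (which were responsible for the loss of information mod $2$) can occur.

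Let $\pi:W\to[0,1]$ denote the fibration of hypothesis (i), so that $X_t=\pi^{-1}(t)$ and $\partial W=\pi^{-1}(\{0,1\})$. The key step is to prove that $\pi|_{H^{-1}(A)}:H^{-1}(A)\to[0,1]$ is a submersion, equivalently that the tangent line to $H^{-1}(A)$ at any point $p$ is never vertical, i.e. never contained in the fiber direction $T_pX_t$. This is exactly where the fiberwise transversality (ii) enters. Transversality of $H$ gives $T_pH^{-1}(A)=(dH_p)^{-1}(T_{H(p)}A)$, a line; on the other hand, since $\dim X_t=\codim A$ and $h_t=H|_{X_t}$ is transverse to $A$, the set $h_t^{-1}(A)$ is $0$-dimensional, so a dimension count shows $\{\,v\in T_pX_t : dH_p(v)\in T_{H(p)}A\,\}=\{0\}$. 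Were the tangent line of $H^{-1}(A)$ at $p$ vertical, it would be contained in this trivial space, a contradiction; hence $\pi|_{H^{-1}(A)}$ is a local diffeomorphism onto $[0,1]$.

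Once this is established, the conclusion is pure $1$-manifold topology. A circle component would yield a local diffeomorphism $S^1\to[0,1]$, which is impossible, since the maximum of $\pi$ along the circle is attained at an interior point, where the differential of $\pi|_{S^1}$ must vanish. So every component is an arc $c:[0,1]\to W$ with $c(0),c(1)\in\partial W$. Along such an arc $\pi\circ c$ has nowhere-vanishing derivative, hence is strictly monotone; since $H^{-1}(A)$ is properly embedded, $\pi(c(s))\in(0,1)$ for $s\in(0,1)$ while $\pi(c(0)),\pi(c(1))\in\{0,1\}$, which forces one endpoint onto $X_0$ and the other onto $X_1$. Thus each arc contributes exactly one point to $h_0^{-1}(A)$ and one to $h_1^{-1}(A)$, and conversely each point of $h_0^{-1}(A)$ (resp. $h_1^{-1}(A)$), being a boundary point of the $1$-manifold $H^{-1}(A)$, is the endpoint of a unique arc. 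This matching is a bijection $h_0^{-1}(A)\leftrightarrow h_1^{-1}(A)$, whence $|h_0^{-1}(A)|=|h_1^{-1}(A)|$.

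The main obstacle is the middle step, namely verifying that the tangent to $H^{-1}(A)$ is never vertical. Everything else is standard, but this transversality-and-dimension-count argument is the conceptual heart of the proof: it is precisely the fiberwise condition (ii), rather than the global transversality of $H$ alone, that prevents an arc from doubling back to the same boundary component and so rules out the $X_0$-to-$X_0$ and $X_1$-to-$X_1$ arcs permitted in Proposition \ref{invcobgen}.
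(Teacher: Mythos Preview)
Your proof is correct and follows essentially the same approach as the paper. Both arguments hinge on the observation that the tangent line to $H^{-1}(A)$ can never lie in a fiber $T_pX_t$, since fiberwise transversality makes $d_p h_t:T_pX_t\to T_{H(p)}Z/T_{H(p)}A$ an isomorphism; the paper applies this directly at the extremal value $t_0$ of $\pi$ along a hypothetical bad arc, while you package the same computation as the statement that $\pi|_{H^{-1}(A)}$ is a submersion and then read off the consequences.
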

\begin{proof}It is sufficient to show that in $H^{-1}(A)$ there are no arcs with ending points in the same boundary component. By contradiction, let $\gamma$ be an arc with ending point in $X_{0}$. Define
\[
	t_{0}=\sup\{t \in [0,1] \ | \ \gamma \cap X_{t} \ne \emptyset \} \ .
\]
A tangent vector $\dot{\gamma}$ at a point $p \in X_{t_{0}}\cap \gamma$ is in the kernel of the map 
\[
	d_{p}H: T_{(p,t_{0})}W  \rightarrow T_{q}(Z\times Z)/T_{q}(A) \ ,
\]
where $q=H(p)$. The contradiction follows by noticing that on the one hand $\dot{\gamma}$ is contained in the tangent space $T_{p}X_{t_{0}}$ by construction but on the other hand $d_{p}h_{t_{0}}: T_{p}X_{t_{0}} \rightarrow T_{q}(Z\times Z)/T_{q}(A)$ is an isomorphism by transversality. \\
A similar reasoning works when $\gamma$ has ending points in $X_{1}$.
\end{proof}

\section{Some properties of the maps $\phi^{\pm}$}\label{studymaps}
This section contains the most technical part of the paper. We summarise here briefly, for the convenience of the reader, what the main results of this section are.
\indent For every metric $g$ on $S$ with curvature less than $-1$ we have defined in Section \ref{parametrization} the maps $\phi_{g}^{\pm}$ which associate to every isometric embedding of $(S,g)$ into a GHMC $AdS_{3}$ manifold $M$ the class in Teichm\"uller space of the left and right metrics of $M$. It follows easily from Lemma \ref{manifold2} that for any couple of metrics $g$ and $g'$ with curvature less than $-1$ the maps $\phi^{\pm}_{g}$ and $\phi^{\pm}_{g'}$ are cobordant through a map $\Phi^{\pm}$. In this section we will define the maps $\Phi^{\pm}$ and will study some of its properties, which will enable us to apply the topological intersection theory described in the previous section. More precisely, the first step will consist of proving that the all the maps involved are smooth. This is the content of Proposition \ref{smoothness} and the proof will rely on the fact that the holonomy representation of a hyperbolic metric depends smoothly on the metric. Then we will deal with the properness of the maps $\Phi^{\pm}$ (Corollary \ref{properdef}) that will follow from a compacteness result of isometric embeddings (Corollary \ref{convergenza}). This will allow us also to have a control on the space where two maps $\phi_{g}$ and $\phi_{g'}$ intersect: when we deform one of the two metrics the intersection remains contained in a compact set (Proposition \ref{compact}). \\

 Recall that given a smooth path of metrics $\{g_{t}\}_{t \in [0,1]}$ on $S$ with curvature less than $-1$, the set
\[
	W^{\pm}=\bigcup_{t \in [0,1]}I^{\pm}(S,g_{t}) 
\]
is a manifold with boundary $\partial W^{\pm}=I(S, g_{0})^{\pm} \cup I(S, g_{1})^{\pm}$ of dimension $6\tau-5$ (Lemma \ref{manifold2}). We define the maps
	\begin{align*}
	\Phi^{\pm}: W^{\pm} &\rightarrow \dT\\
		b_{t} &\mapsto (h_{l}(g_{t}, b_{t}), h_{r}(g_{t}, b_{t})):=(g_{t}((E+Jb_{t})\cdot, (E+Jb_{t})\cdot), g_{t}((E-Jb_{t})\cdot, (E-Jb_{t})\cdot))
	\end{align*}
associating to an equivariant isometric embedding (identified with its Codazzi operator $b_{t}$) of $(S, g_{t})$ into $AdS_{3}$ the class in Teichm\"uller space of the left and right metrics of the GHMC $AdS_{3}$ manifold containing it. We remark that the restrictions of $\Phi^{\pm}$ to the boundary coincide with the maps $\phi^{\pm}_{g_{0}}$ and $\phi^{\pm}_{g_{1}}$ defined in Section \ref{parametrization}.\\
\\
\indent We deal first with the regularity of the maps.
\begin{prop}\label{smoothness}The functions $\Phi^{\pm}:W^{\pm} \rightarrow \dT$ are smooth.
\end{prop}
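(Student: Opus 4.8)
The plan is to factor $\Phi^{\pm}$ through the space of hyperbolic metrics and thereby reduce the statement to two independent smoothness claims: an algebraic one, and one expressing that the holonomy of a hyperbolic metric depends smoothly on the metric. Write $\mathcal{M}_{-1}$ for the Fr\'echet manifold of smooth hyperbolic metrics on $S$, and let $\pi_{T}: \mathcal{M}_{-1} \to \T(S)$ be the projection sending a metric to its class in Teichm\"uller space (equivalently, to its holonomy up to conjugacy). Then $\Phi^{\pm}$ factors as
\[
	W^{\pm} \xrightarrow{\ \Psi^{\pm}\ } \mathcal{M}_{-1} \times \mathcal{M}_{-1} \xrightarrow{\ \pi_{T}\times \pi_{T}\ } \dT \ ,
\]
where $\Psi^{\pm}(b_{t}) = \bigl(h_{l}(g_{t},b_{t}),\, h_{r}(g_{t},b_{t})\bigr)$, and it suffices to prove that each factor is smooth.

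First I would check that $\Psi^{\pm}$ is smooth. By Lemma \ref{manifold2}, $W^{\pm}$ is cut out as $(d^{\nabla})^{-1}(0)$ inside the smooth sections of the bundle $\mathpzc{F}$ over $S\times[0,1]$; in particular both the projection $W^{\pm}\to[0,1]$, $b_{t}\mapsto t$, and the tautological inclusion $W^{\pm}\hookrightarrow \Gamma^{\infty}(\Sym(TS))$, $b_{t}\mapsto b_{t}$, are smooth. Since $t\mapsto g_{t}$ is a smooth path of metrics and the complex structure $J_{t}$ associated to $g_{t}$ depends algebraically (hence smoothly) on $g_{t}$ via rotation by $\pi/2$ with respect to $g_{t}$ and the orientation, the assignment of the triple $(g_{t},J_{t},b_{t})$ to a point of $W^{\pm}$ is smooth into the relevant Fr\'echet space. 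The expressions $h_{l}=g_{t}((E+J_{t}b_{t})\cdot,(E+J_{t}b_{t})\cdot)$ and $h_{r}=g_{t}((E-J_{t}b_{t})\cdot,(E-J_{t}b_{t})\cdot)$ are polynomial in these data, so $\Psi^{\pm}$ is smooth into $\Gamma^{\infty}(\Sym(TS))\times\Gamma^{\infty}(\Sym(TS))$. Finally, by the construction recalled in Section \ref{parametrization} (Lemma 3.16 in \cite{SchKra}), $h_{l}$ and $h_{r}$ are positive-definite metrics of constant curvature $-1$, so $\Psi^{\pm}$ indeed takes values in $\mathcal{M}_{-1}\times\mathcal{M}_{-1}$.

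The remaining, and main, point is the smoothness of $\pi_{T}:\mathcal{M}_{-1}\to\T(S)$. I would argue this through the developing map: given a smooth family $h_{s}$ of hyperbolic metrics, one develops each $(\tilde{S},\tilde{h}_{s})$ into $\h^{2}$ by a developing map $\mathrm{dev}_{s}$ normalised to send a fixed base frame to a fixed frame at a base point of $\h^{2}$. Since $\mathrm{dev}_{s}$ is obtained by analytic continuation of the solution of the developing equation, smooth dependence of solutions on parameters yields a family $\mathrm{dev}_{s}$ depending smoothly on $s$; the holonomies $\rho_{s}(\gamma)$, characterised by $\mathrm{dev}_{s}\circ\gamma=\rho_{s}(\gamma)\circ\mathrm{dev}_{s}$, therefore vary smoothly in $\PSL(2,\R)$, and passing to conjugacy classes produces the smooth curve $\pi_{T}(h_{s})$ in $\T(S)$. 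Equivalently, one may invoke the standard fact that $\mathcal{M}_{-1}\to\T(S)$ is a smooth principal $\Diff_{0}(S)$-bundle. Composing $\pi_{T}\times\pi_{T}$ with $\Psi^{\pm}$ then gives the smoothness of $\Phi^{\pm}$.

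The hard part is this last step: one must control the infinite-dimensional (Fr\'echet) analysis carefully enough to guarantee that the holonomy map is smooth, not merely continuous, and that the developing maps can genuinely be chosen to depend smoothly on the metric. Everything else reduces either to the algebraic form of the left and right metric formulas or to the manifold structure already furnished by Lemma \ref{manifold2}.
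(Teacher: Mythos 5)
Your proposal is correct and follows essentially the same route as the paper: factor $\Phi^{\pm}$ through the space of hyperbolic metrics via the algebraic map given by the left/right metric formulas, then prove that the holonomy (equivalently, the Teichm\"uller class) depends smoothly on a hyperbolic metric. Your developing-map argument is exactly the paper's construction, which analytically continues local isometries $f_{i}:U_{i}\to\mathbb{H}^{2}$ along a loop and observes that the resulting holonomy isometry depends smoothly on the metric.
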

\begin{proof} Let $\mathpzc{M}_{S}$ be the set of hyperbolic metrics on $S$. We can factorise the map $\Phi^{\pm}$ as follows:
\[
	W^{\pm} \xrightarrow{\Phi'^{\pm}} \mathpzc{M}_{S}\times \mathpzc{M}_{S} \xrightarrow{\pi} \dT\\
\]
where $\Phi'^{\pm}$ associates to an isometric embedding of $(S, g_{t})$ (determined by an operator $b_{t}$ satisfying the Gauss-Codazzi equation) the couple of hyperbolic metrics $(g_{t}((E+J_{t}b_{t})\cdot, (E+J_{t}b_{t})\cdot), g_{t}((E-J_{t}b_{t})\cdot, (E-J_{t}b_{t})\cdot))$, and $\pi$ is the projection to the corresponding isotopy class, or, equivalently, the map which associates to a hyperbolic metric its holonomy representation. Since the maps $\Phi'^{\pm}$ are clearly smooth by definition, we just need to prove that the holonomy representation depends smoothly on the metric. Let $h$ be a hyperbolic metric on $S$. Fix a point $p \in S$ and a unitary frame $\{v_{1}, v_{2}\}$ of the tangent space $T_{p}S$. We consider the ball model for the hyperbolic plane and we fix a unitary frame $\{w_{1}, w_{2}\}$ of $T_{0}\mathbb{H}^{2}$. We can realise every element of the fundamental group of $S$ as a closed path passing through $p$. Let $\gamma$ be a path passing through $p$ and let $\{U_{i}\}_{i=0, \dots n}$ be a finite covering of $\gamma$ such that every $U_{i}$ is homeomorphic to a ball. We know that there exists a unique map $f_{0}: U_{0} \rightarrow B_{0}\subset \mathbb{H}^{2}$ such that
\[
	\begin{cases}
	f_{0}(p)=0 \\ 
	d_{p}f_{0}(v_{i})=w_{i}\\
	f_{0}^{*}g_{\mathbb{H}^{2}}=h \ .
	\end{cases}
\]
Then, for every $i\geq 1$ there exists a unique isometry $f_{i}: U_{i} \rightarrow B_{i}\subset \mathbb{H}^{2}$ which coincides with $f_{i-1}$ on the intersection $U_{i} \cap U_{i-1}$. Let $q=f_{n}(p) \in \mathbb{H}^{2}$. The holonomy representation sends the homotopy class of the path $\gamma$ to the isometry $I_{q}: \mathbb{H}^{2}\rightarrow \mathbb{H}^{2}$ such that $I_{q}(q)=0$. Moreover, its differential maps the frame $\{u_{i}=df_{n}(v_{i})\}$ to the frame $w_{i}$. The isometry $I_{q}$ depends smoothly on $q$ and on the frame $u_{i}$, which depend smoothly on the metric because each $f_{i}$ does.
\end{proof}

The next step is about the properness of the maps $\Phi^{\pm}$.  This will involve the study of sequences of isometric embeddings of a disc into a simply-connected spacetime, which have been extensively and profitably analysed in \cite{isomsch}. In particular, the author proved that, under reasonable hypothesis, a sequence of isometric embeddings of a disc into a simply-connected spacetime has only two possible behaviours: it converges $C^{\infty}$, up to subsequences, to an isometric embedding, or it is degenerate in a precise sense:

\begin{teo}[Theorem 5.6 in \cite{isomsch}]\label{degenere} Let $\tilde{f}_{n}: D\rightarrow X$  be a sequence of uniformly elliptic \footnotemark[1] immersions of a disc $D$ in a simply connected Lorentzian spacetime $(X, \tilde{g})$. Assume that the metrics $\tilde{f}_{n}^{*}\tilde{g}$ converge $C^{\infty}$ towards a Riemannian metric $\tilde{g}_{\infty}$ on $D$ and that there exists a point $x \in D$ such that the sequence of the $1$-jets $j^{1}\tilde{f}_{n}(x)$ converges. If the sequence $\tilde{f}_{n}$ does not converge in the $C^{\infty}$ topology in a neighbourhood of $x$, then there exists a maximal geodesic $\gamma$ of $(D, \tilde{g}_{\infty})$ and a geodesic arc $\Gamma$ of $(X, \tilde{g})$ such that the sequence $(\tilde{f}_{n})_{|_{\gamma}}$ converges towards an isometry $\tilde{f}_{\infty}: \gamma \rightarrow \Gamma$.
\end{teo}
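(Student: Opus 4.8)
The plan is to read the statement as a compactness-and-degeneration \emph{dichotomy} for a uniformly elliptic geometric PDE, and to separate the two alternatives according to whether the second fundamental forms of the $\tilde{f}_n$ stay locally bounded near $x$. First I would record that uniform ellipticity provides a uniform positive lower bound on $\det B_n$, where $B_n$ denotes the shape operator of $\tilde{f}_n$, while the Gauss equation together with the $C^\infty$ convergence of the induced metrics $\tilde{f}_n^*\tilde{g}$ toward $\tilde{g}_\infty$ gives a matching upper bound; hence $\det B_n$ stays between two positive constants near $x$. In suitable local coordinates an isometric immersion with prescribed induced metric is then encoded by a uniformly elliptic Monge--Ampère type equation for $\tilde{f}_n$, and the convergence of the $1$-jet $j^1\tilde{f}_n(x)$ supplies the anchoring data, i.e.\ $C^0$ and $C^1$ control at the base point.

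The dichotomy then hinges on the quantity $M_n(r)=\sup_{B_r(x)}|B_n|$. If $M_n(r)$ stays bounded for some fixed $r>0$, then the immersions have uniformly bounded second derivatives on $B_r(x)$; interior Schauder estimates for the uniformly elliptic equation, combined with the $C^1$ control at $x$ and the $C^\infty$ convergence of the metrics, upgrade this to uniform $C^{k}$ bounds for every $k$ on a smaller ball by bootstrapping, so that a subsequence of $\tilde{f}_n$ converges in $C^\infty$ near $x$. This contradicts the standing assumption that no such convergence holds, so necessarily $|B_n|\to\infty$ along points accumulating at $x$: the sequence must concentrate curvature.

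The remaining, and principal, work is the blow-up analysis of this concentration. Because $\det B_n$ is uniformly trapped between positive constants while $|B_n|\to\infty$, the largest principal curvature must diverge and the smallest one, being $\det B_n$ divided by the largest, must tend to $0$. The principal direction $e_n$ associated with the vanishing curvature selects, in the limit, a line field on a neighbourhood of $x$, and I would take its integral curve through $x$ to be the candidate geodesic $\gamma$. Since along $e_n$ the normal curvature degenerates, the surface becomes asymptotically totally geodesic in that direction, and I would use the Codazzi equation $d^\nabla B_n=0$ to control the variation of the principal curvatures and directions and thereby establish, in the limit, both that $\gamma$ is a geodesic of $(D,\tilde{g}_\infty)$ and that the restrictions $(\tilde{f}_{n})_{|_{\gamma}}$ converge to a map $\tilde{f}_\infty:\gamma\to X$ which is an isometry onto a geodesic arc $\Gamma$ of $(X,\tilde{g})$; maximality of $\gamma$ follows by prolonging the argument as long as the limit line field remains defined.

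The \textbf{main obstacle} is precisely this blow-up step: one must rule out more complicated concentration behaviour, such as concentration at an isolated point or the bubbling-off of a nontrivial limit immersion, and prove that the curvature blows up exactly transversally to a single geodesic. This requires a priori estimates strong enough to pass to the limit in the rescaled picture, and, because the ambient metric $\tilde{g}$ is Lorentzian, careful control ensuring that the limit arc $\Gamma$ is a genuine geodesic of definite causal type rather than a degenerate object. A conceptually cleaner route, consistent with the pseudo-holomorphic curve viewpoint already exploited in the proof of Lemma \ref{manifold}, is to lift each $\tilde{f}_n$ to a pseudo-holomorphic curve in the appropriate almost-complex bundle over $X$ and invoke Gromov-type compactness: smooth convergence of the curves realises the first alternative, while degeneration forces a limit curve whose restriction is an isometry onto a geodesic $\Gamma$, realising the second.
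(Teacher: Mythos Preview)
The paper does not contain a proof of this theorem: it is quoted verbatim as Theorem~5.6 of \cite{isomsch} (Schlenker, \emph{Surfaces convexes dans des espaces lorentziens \`a courbure constante}, Comm.\ Anal.\ Geom.\ 1996) and used as a black box in the proof of Corollary~\ref{convergenza}. There is therefore nothing in the present paper to compare your proposal against.

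That said, your outline is broadly in the spirit of the original argument in \cite{isomsch}: the dichotomy is indeed governed by boundedness of the second fundamental form, uniform ellipticity together with the Gauss equation pins $\det B_n$ between positive constants, and when $|B_n|\to\infty$ one principal curvature diverges while the other tends to zero, forcing the surface to collapse onto a geodesic in the direction of the small eigenvalue. Your identification of the main obstacle is also accurate: the delicate point is controlling the limit of the principal line field and ruling out more exotic degenerations. The pseudo-holomorphic lift you mention at the end is in fact closer to how Schlenker organises the argument in \cite{isomsch}; the Gauss lift of a spacelike immersion into the bundle of timelike unit vectors is a pseudo-holomorphic curve for a suitable almost-complex structure, and Gromov-type compactness for such curves is what drives the dichotomy. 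If you want to carry this through, you should consult \cite{isomsch} directly rather than attempt to reconstruct the estimates from scratch.
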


\footnotetext[1]{We recall that a sequence of isometric immersions is said to be uniformly elliptic if the corresponding shape operators have uniformly positive determinant.} 

 We start with a straightforward application of the Maximum Principle, which we recall here in the form useful for our purposes (see e.g. Prop. 4.6 in \cite{folKsurfaces}).

\begin{prop}[Maximum Principle] Let $\Sigma_{1}$ and $\Sigma_{2}$ two future-convex space-like surfaces embedded in a GHMC $AdS_{3}$ manifold $M$. If they intersect in a point $x$ and $\Sigma_{1}$ is in the future of $\Sigma_{2}$ then the product of the principal curvatures of $\Sigma_{2}$ is smaller than the product of the principal curvatures of $\Sigma_{1}$.
\end{prop}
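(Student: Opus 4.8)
The plan is to reduce the statement to a second-order comparison at the contact point and then to exploit the fact that future-convexity makes both shape operators definite, which is exactly what turns a comparison of Hessians (a priori only controlling mean curvatures) into a comparison of the \emph{products} of principal curvatures. Since the conclusion is local and $M$ is locally isometric to $AdS_{3}$, I would first lift a neighbourhood of $x$ through the developing map, so that $\Sigma_{1}$ and $\Sigma_{2}$ become space-like surfaces $\tilde{\Sigma}_{1},\tilde{\Sigma}_{2}\subset AdS_{3}$ meeting at a point $\tilde{x}$ with $\tilde{\Sigma}_{1}$ in the future of $\tilde{\Sigma}_{2}$. Because $\tilde{\Sigma}_{1}$ lies locally in the closed future of $\tilde{\Sigma}_{2}$ and the two surfaces meet at $\tilde{x}$, this is a one-sided contact, which forces the surfaces to be tangent at $\tilde{x}$: they share the tangent plane and the future-directed unit normal $N$.

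Next I would introduce Fermi coordinates based on the totally geodesic space-like plane $P$ tangent to both surfaces at $\tilde{x}$, writing points near $\tilde{x}$ as $\exp_{y}(sN(y))$ with $y\in P$ and $s$ the arc length along the future-directed normal geodesics. In these coordinates the metric has the form $-ds^{2}+g_{s}$ with $g_{0}$ the metric of $P$ and $\partial_{s}g_{s}|_{s=0}=0$, since $P$ is totally geodesic, and each surface becomes a graph $s=u_{i}(y)$ with $u_{i}(\tilde{x})=0$ and $\nabla u_{i}(\tilde{x})=0$ by tangency. The hypothesis that $\tilde{\Sigma}_{1}$ is in the future translates into $u_{1}\ge u_{2}$. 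A short computation at the contact point then gives that the second fundamental form with respect to $N$ is $II_{i}=-\mathrm{Hess}\,u_{i}$ (the timelike normal reversing the Euclidean graph convention), so that, raising an index with the common induced metric $g_{0}$ at $\tilde{x}$, the shape operators are $B_{i}=-g_{0}^{-1}\mathrm{Hess}_{\tilde{x}}u_{i}$.

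The comparison is now immediate. The function $w=u_{1}-u_{2}\ge 0$ attains an interior minimum at $\tilde{x}$ with vanishing gradient, so $\mathrm{Hess}_{\tilde{x}}w\ge 0$ and therefore $B_{2}-B_{1}=g_{0}^{-1}\mathrm{Hess}_{\tilde{x}}w\ge 0$, i.e. $B_{2}\ge B_{1}$ as symmetric endomorphisms. Here is where future-convexity enters decisively: both $B_{1}$ and $B_{2}$ are negative definite, as fixed by the sign convention of Section \ref{embeddings}. Setting $P:=-B_{1}\ge -B_{2}=:Q>0$ and diagonalising $Q^{-1/2}PQ^{-1/2}\ge I$, one gets $\det P\ge \det Q$, and since $\det(-B)=\det B$ in dimension two this yields $\det B_{2}\le \det B_{1}$, i.e. the product of the principal curvatures of $\Sigma_{2}$ is at most that of $\Sigma_{1}$. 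The same diagonalisation gives $\det P>\det Q$ as soon as $P\ne Q$, hence a strict inequality whenever $\mathrm{Hess}_{\tilde{x}}w\ne 0$.

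I expect the main obstacle to lie not in this algebra but in two points requiring genuine care. First, the precise Lorentzian sign in the identity $II_{i}=-\mathrm{Hess}\,u_{i}$, which must be checked against the future-normal convention so that the final inequality points in the direction claimed. Second, upgrading to a strict inequality in the degenerate case of second-order contact $\mathrm{Hess}_{\tilde{x}}w=0$, where the pointwise argument is inconclusive and one must invoke the strong maximum principle (Hopf's lemma) for the quasilinear elliptic equation governing convex space-like graphs.
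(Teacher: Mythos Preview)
The paper does not give its own proof of this proposition; it is simply recalled from the literature with a reference to Proposition~4.6 of \cite{folKsurfaces}. Your argument is the standard one and is correct: localise via the developing map, write both surfaces as graphs $s=u_i$ over their common tangent totally geodesic plane, use the interior minimum of $u_1-u_2$ to obtain $B_2\ge B_1$ as symmetric operators, and then exploit negative-definiteness (future-convexity) to pass from the operator inequality to $\det B_2\le\det B_1$ via $-B_1\ge -B_2>0$.

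The two caveats you raise are exactly the right ones. The Lorentzian sign in $II_i=-\mathrm{Hess}\,u_i$ is consistent with the paper's convention (a quick check on the future sheet of the unit hyperboloid in $\mathbb{R}^{2,1}$, where $u=\sqrt{1+r^2}$ has $\mathrm{Hess}_0 u=I$ while $B=-E$, confirms it). For the strict inequality when $\mathrm{Hess}_{\tilde x}(u_1-u_2)=0$, one indeed needs the strong maximum principle rather than the pointwise second-order test; note, however, that for the only application made in the paper (bracketing a surface between constant-curvature leaves of the $\kappa$-time foliation), the non-strict inequality already suffices.
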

%reference

\begin{prop}Let $\Sigma$ be a future-convex space-like surface embedded into a GHMC $AdS_{3}$ manifold $M$. Suppose that the Gaussian curvature of $\Sigma$ is bounded between $-\infty < \K_{min}\leq \K_{max} < -1$. Denote with $S_{min}$ and $S_{max}$ the unique future-convex space-like surfaces with constant curvature $\K_{min}$ and $\K_{max}$ embedded in $M$ (Corollary 4.7 in \cite{folKsurfaces}). Then $\Sigma$ is in the past of $S_{max}$ and in the future of $S_{min}$.
\end{prop}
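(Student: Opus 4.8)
The plan is to use the family of constant-curvature surfaces as barriers and to extract the two inclusions from the Maximum Principle, after a careful bookkeeping of signs.

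First, I would record the relevant structure of the foliation. By Corollary 4.7 of \cite{folKsurfaces}, the region of $M$ lying in the past of the convex core — which is precisely where future-convex space-like surfaces live, and in particular where $\Sigma$ sits — is foliated by future-convex constant-curvature surfaces $\{S_{c}\}_{c\in(-\infty,-1)}$, with $S_{c}$ of curvature $c$. Along this foliation the curvature increases towards the future, approaching $-1$ as one moves towards the convex core and tending to $-\infty$ towards the past singularity; equivalently, $S_{c}$ lies in the future of $S_{c'}$ if and only if $c>c'$. In particular $S_{max}$ is the leaf farthest to the future and $S_{min}$ the one farthest to the past among those with curvature in $[\K_{min},\K_{max}]$, so the statement is exactly that $\Sigma$ is sandwiched between the two. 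Throughout, the product of the principal curvatures of a future-convex surface equals $-\K-1$ by the Gauss equation, and this is the quantity compared in the Maximum Principle.

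Next I would set up the extremal tangencies. Since $\Sigma$ is compact and contained in the foliated region, the map $c\colon\Sigma\to(-\infty,-1)$ sending $p$ to the curvature of the leaf through $p$ is smooth and attains a maximum $c_{0}$ at some $p_{0}$ and a minimum $c_{1}$ at some $p_{1}$. At $p_{0}$ the surface $\Sigma$ is tangent to $S_{c_{0}}$, and maximality of $c_{0}$ forces $\Sigma$ to lie (weakly) in the past of $S_{c_{0}}$, so $S_{c_{0}}$ is in the future of $\Sigma$. The Maximum Principle, applied with $\Sigma_{1}=S_{c_{0}}$ and $\Sigma_{2}=\Sigma$, then gives that the product of the principal curvatures of $\Sigma$ at $p_{0}$ is at most that of $S_{c_{0}}$; by the Gauss equation this reads $-\K_{\Sigma}(p_{0})-1\le -c_{0}-1$, i.e. $c_{0}\le \K_{\Sigma}(p_{0})\le \K_{max}$. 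Hence $S_{c_{0}}$ lies in the past of $S_{max}$, and since $\Sigma$ lies in the past of $S_{c_{0}}$ it lies in the past of $S_{max}$. The argument at $p_{1}$ is symmetric: there $S_{c_{1}}$ is in the past of $\Sigma$, the Maximum Principle gives $\K_{\Sigma}(p_{1})\le c_{1}$, and combined with $\K_{\Sigma}(p_{1})\ge \K_{min}$ this yields $c_{1}\ge \K_{min}$, so that $\Sigma$ lies in the future of $S_{c_{1}}$ and therefore of $S_{min}$.

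The delicate part, and the one I would treat most carefully, is the simultaneous bookkeeping of the three sign conventions: the future-directed normal (which makes future-convex surfaces have negative principal curvatures but positive product of principal curvatures, equal to $-\K-1>0$), the direction in which the curvature varies along the foliation, and the sense of the inequality in the Maximum Principle. A single reversed sign swaps the roles of past and future and makes the conclusion vacuous or false. A secondary point that must be justified is that $\Sigma$ is entirely contained in the foliated region, so that $c(\cdot)$ is globally defined; this rests on the fact that a future-convex space-like surface lies in the past of the convex core, which is exactly the region covered by the foliation $\{S_{c}\}$.
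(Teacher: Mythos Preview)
Your proof is correct and follows essentially the same route as the paper's: your leaf-curvature function $c$ on $\Sigma$ is exactly the $\K$-time $T$ of \cite{folKsurfaces} restricted to $\Sigma$, and your extremal values $c_0,c_1$ are the paper's $t_{\max},t_{\min}$, with the Maximum Principle applied at the tangency points in the same way. Your explicit discussion of the sign conventions and of why $\Sigma$ sits in the foliated region is a welcome addition, but there is no substantive difference in strategy.
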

\begin{proof} Consider the unique (Corollary 4.7 in \cite{folKsurfaces}) $\K$-time 
\[
	T:I^{-}(\partial_{-}C(M))\rightarrow (-\infty,-1)\ ,
\]
i.e. the unique function defined on the past of the convex core of $M$ such that the level sets $T^{-1}(\K)$ are future-convex space-like surfaces of constant curvature $\K$. The restriction of $T$ to $\Sigma$ has a maximum $t_{max}$ and a minimum $t_{min}$. Consider the level sets $L_{min}=T^{-1}(t_{min})$ and $L_{max}=T^{-1}(t_{max})$. By construction $\Sigma$ is in the future of $L_{min}$ and they intersect in a point $x$, hence, by the Maximum Principle and the Gauss equation, we obtain the following inequality for the Gaussian curvature of $\Sigma$ at the point $x$:
\[
	t_{min}\geq \K(x) \geq \K_{min} \ .
\]
Similarly we obtain that $t_{max}\leq \K(y)\leq \K_{max}$, where $y$ is the point of intersection between $L_{max}$ and $\Sigma$. But this implies that $\Sigma$ is in the past of the level set $T^{-1}(\K_{max})$ and in the future of the level set $T^{-1}(\K_{min})$, which correspond respectively to the surfaces $S_{max}$ and $S_{min}$ by uniqueness.
\end{proof}

\begin{cor}\label{convergenza}Let $g_{n}$ be a compact family of metrics in the $C^{\infty}$ topology with curvatures $\K < -1$ on a surface $S$. Let $f_{n}: (S, g_{n})\rightarrow M_{n}=(S\times \R, h_{n})$ be a sequence of isometric embeddings of $(S, g_{n})$ as future-convex space-like surfaces into GHMC $AdS_{3}$ manifolds. If the sequence $h_{n}$ converges to an $AdS$ metric $h_{\infty}$ in the $C^{\infty}$-topology, then $f_{n}$ converges $C^{\infty}$, up to subsequences, to an isometric embedding into $M_{\infty}=(S\times \R, h_{\infty})$.
\end{cor}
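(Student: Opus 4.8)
The plan is to apply the compactness--degeneration dichotomy of Theorem \ref{degenere} to the lifts of the $f_{n}$, and to rule out the degenerate alternative by means of the barrier estimate proved above. First I would record the uniform estimates. Since $\{g_{n}\}$ is compact in the $C^{\infty}$ topology, after extracting a subsequence we may assume that $g_{n}\to g_{\infty}$ in $C^{\infty}$ and that the curvatures satisfy $-\infty<\K_{min}\le \K_{n}\le \K_{max}<-1$ uniformly in $n$. By the Gauss equation the shape operators $b_{n}$ obey $\det(b_{n})=-\K_{n}-1\in[-\K_{max}-1,-\K_{min}-1]\subset(0,+\infty)$, so the immersions are uniformly elliptic in the sense required by Theorem \ref{degenere}.

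Next I would pass to the universal covers. Lifting each $f_{n}$ and identifying $\widetilde{M_{n}}$ with its image under the developing map inside the fixed spacetime $AdS_{3}$, I obtain equivariant isometric immersions $\tilde f_{n}:D=\tilde S\to AdS_{3}$ whose pulled-back metrics converge to $\tilde g_{\infty}$; since $h_{n}\to h_{\infty}$ and the holonomy of an $AdS_{3}$-structure depends continuously on the metric, the holonomies $\rho_{n}$ converge, up to conjugacy, to $\rho_{\infty}$. To verify the remaining hypothesis of Theorem \ref{degenere} I need the $1$-jets to converge at a point, and here the barrier estimate proved above is used for the first time: each $\Sigma_{n}=f_{n}(S)$ lies between the constant curvature surfaces $S_{min}^{n}$ and $S_{max}^{n}$ of $M_{n}$, and as $h_{n}\to h_{\infty}$ these barriers converge to the corresponding surfaces of $M_{\infty}$, so for $n$ large all the $\Sigma_{n}$ are contained in a single compact region $\mathcal{K}\subset M_{\infty}$. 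Fixing $\tilde p\in D$ and post-composing with isometries of $AdS_{3}$ that send $\tilde f_{n}(\tilde p)$ and a tangent frame there to fixed values, the confinement to $\mathcal{K}$ keeps these isometries in a compact subset of $\dPSL$; hence the conjugated holonomies still converge and $j^{1}\tilde f_{n}(\tilde p)$ converges.

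With all hypotheses in place, Theorem \ref{degenere} yields the dichotomy: either $\tilde f_{n}$ converges in $C^{\infty}$ on a neighbourhood of every point, or there is a maximal geodesic $\gamma$ of $(D,\tilde g_{\infty})$ and a geodesic arc $\Gamma\subset AdS_{3}$ such that $\tilde f_{n}|_{\gamma}$ converges to an isometry onto $\Gamma$. Excluding this degenerate alternative is the main obstacle, and it is precisely where the barriers re-enter. Geometrically, the degenerate behaviour forces a principal curvature of $\Sigma_{n}$ to vanish along $\gamma$ while, by uniform ellipticity, the other blows up, so that the surface flattens onto the geodesic $\Gamma$. I would rule this out by combining the confinement of $\Sigma_{n}$ to the compact slab $\mathcal{K}$, foliated by the level sets of the $\K$-time function, with the Maximum Principle: comparison with the uniformly space-like leaves of bounded second fundamental form, together with $\det(b_{n})$ bounded away from $0$ and $\infty$, should provide uniform bounds on the extrinsic geometry of $\Sigma_{n}$ along $\gamma$ and keep the tangent planes uniformly space-like, which is incompatible with the collapse onto $\Gamma$. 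This is the technical heart of the argument.

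Once degeneration is excluded, $\tilde f_{n}\to \tilde f_{\infty}$ in $C^{\infty}$. The limit $\tilde f_{\infty}$ is an isometric immersion with induced metric $g_{\infty}$; it is $\rho_{\infty}$-equivariant because $\rho_{n}\to\rho_{\infty}$, and its principal curvatures stay positive by uniform ellipticity, so it is future-convex. Since the $\Sigma_{n}$ are embedded Cauchy surfaces and $C^{\infty}$ convergence to a non-degenerate limit preserves injectivity, $\tilde f_{\infty}$ is an embedding. Therefore it descends to a future-convex space-like isometric embedding of $(S,g_{\infty})$ into $M_{\infty}=(S\times\R,h_{\infty})$, which is the required conclusion.
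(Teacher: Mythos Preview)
Your setup is fine through the point where you invoke Theorem \ref{degenere}: uniform ellipticity, convergence of the pulled-back metrics, and the normalization of the $1$-jet are all handled adequately (indeed the paper normalizes more directly, simply using that the isometry group acts transitively on frames, without needing the barrier confinement at that stage). The genuine gap is in how you propose to exclude the degenerate alternative.

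You correctly observe that degeneration means one principal curvature tends to $0$ and the other to $\infty$ while the product stays bounded. But your proposed resolution---using the Maximum Principle and confinement between the leaves of the $\K$-time foliation to ``provide uniform bounds on the extrinsic geometry''---does not work as stated. The Maximum Principle in this setting compares only the \emph{product} of the principal curvatures, which you already know is bounded; it says nothing about the individual curvatures. And being trapped between two smooth barriers does not by itself bound the second fundamental form of $\Sigma_{n}$: the surface could oscillate with arbitrarily large principal curvatures while staying inside the slab. Something more is needed, and you have not supplied it.

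The paper's argument is quite different and avoids this issue entirely. It does not attempt to bound the principal curvatures. Instead it exploits the \emph{global} nature of the degenerate limit: the maximal geodesic $\tilde\gamma$ in $(\tilde S,\tilde g_{\infty})$ has infinite length, so its isometric image $\tilde\Gamma$ is a complete space-like geodesic of $AdS_{3}$. The barrier argument is used not to bound curvatures but to trap the location of $\tilde\Gamma$: one fixes a single constant-curvature surface $\Sigma_{\epsilon}$ in $M_{\infty}$ with curvature $-1-2\epsilon$, observes that for $n$ large its curvature in $M_{n}$ lies in $(-1-3\epsilon,-1-\epsilon)$, hence $\Sigma_{\epsilon}$ is convex in $M_{n}$ and lies in the future of $f_{n}(S)$. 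Thus the projection of $\tilde\Gamma$ lies in the past of $\Sigma_{\epsilon}$, which is disjoint from the convex core of $M_{\infty}$. This contradicts the elementary Lemma \ref{geodetiche}: every complete space-like geodesic in a GHMC $AdS_{3}$ manifold is contained in the convex core. Note also that the paper uses a \emph{fixed} barrier $\Sigma_{\epsilon}$ in $M_{\infty}$ rather than arguing, as you do, that the barriers $S^{n}_{min},S^{n}_{max}$ converge---the latter would require a separate continuity statement for constant-curvature surfaces that you have not justified.
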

\begin{proof} Consider the equivariant isometric embeddings $\tilde{f}_{n}: (\tilde{S}, \tilde{g}_{n}) \rightarrow \tilde{AdS}_{3}$ obtained by lifting $f_{n}$ to the universal cover. We denote with $\tilde{S}_{n}$ the images of the disc $\tilde{S}$ under the map $\tilde{f}_{n}$ and let $\tilde{h}_{n}$ be the lift of the Lorentzian metrics $h_{n}$ on $\tilde{AdS}_{3}$. By hypothesis $\tilde{f}_{n}^{*}\tilde{h}_{n}=\tilde{g}_{n}$ admits a subsequence converging to $\tilde{g}_{\infty}$. \\
\indent Fix a point $x \in \tilde{S}$. Since the isometry group of $\tilde{AdS}_{3}$ acts transitively on points and frames, we can suppose that $\tilde{f}_{n}(x)=y \in \tilde{AdS}_{3}$ and $j^{1}\tilde{f}_{n}(x)=z$ for every $n \in \mathbb{N}$. \\
\indent Moreover, the condition on the curvature of the metrics $g_{n}$ guarantees that the sequence $\tilde{f}_{n}$ is uniformly elliptic. \\
Therefore, we are under the hypothesis of Theorem \ref{degenere}.\\
\indent The previous proposition allows us to determine precisely in which region of $M_{n}$ each surface $f_{n}(S)$ lies.
Since the family of metrics $g_{n}$ is compact, the curvatures $\K_{n}$ of the surfaces $f_{n}(S)$ in $M_{n}$ are uniformly bounded $\K^{min} \leq \K_{n} \leq \K^{max}\leq -1-3\epsilon$ for some $\epsilon>0$. By the previous proposition each surface $f_{n}(S)$ is in the past of $\Sigma_{n}^{max}$ and in the future of $\Sigma_{n}^{min}$, where $\Sigma_{n}^{min}$ and $\Sigma_{n}^{max}$ are the unique future-convex space-like surfaces of $M_{n}$ with constant curvature $\K^{min}$ and $\K^{max}$. Let $\Sigma_{\epsilon}$ be the unique future-convex space-like surface in $M_{\infty}$ with constant curvature $-1-2\epsilon$. We think of $\Sigma$ as a fixed surface embedded in $S\times \R$ and we change the Lorentzian metric of the ambient space. Since $h_{n}$ converges to $h_{\infty}$, the metrics induced on $\Sigma_{\epsilon}$ by $h_{n}$ converge to the metric induced on $\Sigma_{\epsilon}$ by $h_{\infty}$. In particular, for $n$ sufficiently large the curvature of $\Sigma_{\epsilon}$ as surface embedded in $M_{n}=(S \times \R, h_{n})$ is bounded between $-1-3\epsilon$ and $-1-\epsilon$. Therefore, $\Sigma_{\epsilon}$ is convex in $M_{n}$ and by the previous proposition $\Sigma_{\epsilon}$ is in the future of $\Sigma_{n}^{max}$ for every $n$ sufficiently big. This implies that each surface $f_{n}(S)$ is in the past of the surface $\Sigma_{\epsilon}$. \\
\indent We can now conclude that the sequence $f_{n}$ must converge to an isometric embedding. Suppose by contradiction that the sequence $\tilde{f}_{n}$ is not convergent in the $C^{\infty}$ topology in a neighbourhood of $x$, then there exists a maximal geodesic $\tilde{\gamma}$ of $(\tilde{S}, \tilde{g}_{\infty})$ and a geodesic segment $\tilde{\Gamma}$ in $\tilde{AdS}_{3}$ such that $(\tilde{f}_{n})_{|_{\tilde{\gamma}}}$ converges to an isometry $\tilde{f}_{\infty}: \tilde{\gamma} \rightarrow \tilde{\Gamma}$. This implies that $\tilde{\Gamma}$ has infinite length. The projection of $\tilde{\Gamma}$ must be contained in the past of $\Sigma_{\epsilon}$, because each $f_{n}(S)$ is contained there for $n$ sufficiently large. But the past of $\Sigma_{\epsilon}$ is disjoint from the convex core of $M_{\infty}$ and this contradicts the following lemma.
\end{proof}

\begin{lemma}\label{geodetiche} In a GHMC $AdS_{3}$-manifold every complete space-like geodesic is contained in the convex core.
\end{lemma}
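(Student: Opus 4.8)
The plan is to pass to the universal cover and to exploit Mess's description of it as the domain of dependence $D=D(\rho)$ of an acausal limit curve $\rho\subset\partial_{\infty}AdS_{3}$, thereby reducing the statement to an elementary fact about where the endpoints of a complete space-like geodesic must lie.

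\emph{First} I would lift. Let $\gamma\colon\R\to M$ be a complete space-like geodesic and let $\tilde{\gamma}$ be a lift to the universal cover, which we identify with $D\subset AdS_{3}$. Since the covering projection is a local isometry, $\tilde{\gamma}$ is a geodesic of $AdS_{3}$ entirely contained in $D$. As $\tilde{\gamma}$ is defined on all of $\R$ at unit speed and lands inside a space-like geodesic of $AdS_{3}$, which is itself an isometric copy of $\R$, the map $\tilde{\gamma}$ is onto that geodesic; in particular its image is a \emph{whole} complete space-like geodesic of $AdS_{3}$, not a proper sub-arc. Recalling that geodesics are intersections of $AdS_{3}$ with projective lines, $\tilde{\gamma}$ is the $AdS_{3}$-part of a projective line meeting the quadric $\partial_{\infty}AdS_{3}$ in two distinct points $a,b$, which are precisely the endpoints at infinity of $\tilde{\gamma}$.

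\emph{Next} I would locate these endpoints. By construction $\tilde{\gamma}(\R)\subset D$, and by Mess's description the closure $\overline{D}$ meets $\partial_{\infty}AdS_{3}$ exactly along $\rho$; hence the limit points $a,b$ of $\tilde{\gamma}$ lie on $\rho$. Since $\rho$ is acausal, being the graph of an orientation-preserving homeomorphism of the circle, two distinct points of $\rho$ never lie on a common line of the left or right ruling, so they are joined by a \emph{unique} space-like geodesic; this forces that geodesic to be $\tilde{\gamma}$ itself. \emph{Finally} I would invoke convexity: writing $\tilde{C}$ for the preimage of the convex core $C(M)$, i.e. the convex hull of $\rho$, I choose points $a_{n},b_{n}\in\tilde{C}$ with $a_{n}\to a$ and $b_{n}\to b$; the space-like geodesic segments $[a_{n},b_{n}]$ lie in $\tilde{C}$ by convexity and converge to the geodesic with endpoints $a,b$, namely $\tilde{\gamma}$. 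As $\tilde{C}$ is closed we conclude $\tilde{\gamma}\subset\tilde{C}$, and projecting down yields $\gamma\subset C(M)$.

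The step I expect to require the most care is this last one: justifying, in the Lorentzian setting, that the convex hull of the acausal curve $\rho$ genuinely contains the space-like geodesics joining its points at infinity (equivalently, that $\tilde{C}$ is the closure of such chords), together with the verification in the first paragraph that the lift of a complete geodesic is the entire $AdS_{3}$ geodesic rather than a segment. Both are standard consequences of Mess's analysis of the convex core (\cite{Mess}; see also \cite{folKsurfaces}), but they are exactly where the acausality of $\rho$ and the convexity of $D$ are genuinely used.
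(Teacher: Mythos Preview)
Your proposal is correct and follows essentially the same approach as the paper: lift the geodesic to the domain of dependence $D(\rho)$, observe that its endpoints at infinity must lie on the limit curve $\rho$, and conclude by convexity of the convex hull of $\rho$. The paper's proof is much terser (three sentences), but the logical skeleton is identical; you have simply supplied the details---why the lift is the entire $AdS_{3}$ geodesic and why endpoints on $\rho$ force containment in the hull---that the paper leaves implicit.
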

\begin{proof} Let $c$ be a complete space-like geodesic in a GHMC $AdS_{3}$ manifold $M$. By a result of Mess (\cite{Mess}), we can realise $M$ as the quotient of the domain of dependence $D(\rho)\subset AdS_{3}$ of a curve $\rho$ on the boundary at infinity by the action of the fundamental group of $S$. The lift $\bar{c}$ of $c$ has ending points on the curve $\rho$, hence $\bar{c}$ is contained in the convex hull of $\rho$ into $AdS_{3}$ and its projection is contained in the convex core of $M$. 
\end{proof}

\begin{oss}Clearly, the same result holds for equivariant isometric embeddings of past-convex space-like surfaces, as it is sufficient to reverse the time-orientation.
\end{oss}

\begin{cor}\label{properdef}The functions $\Phi^{\pm}:W^{\pm} \rightarrow \dT$ are proper.
\end{cor}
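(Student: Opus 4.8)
The plan is to prove properness directly from the sequential characterisation: given a compact set $K\subset\dT$, I would show that any sequence $(b_n)_{n\in\N}$ in $(\Phi^{\pm})^{-1}(K)$ admits a subsequence converging inside $(\Phi^{\pm})^{-1}(K)$. Each $b_n$ is the Codazzi operator of an equivariant isometric embedding of $(S,g_{t_n})$ for some $t_n\in[0,1]$; since $[0,1]$ is compact, after passing to a subsequence I may assume $t_n\to t_\infty$, and then $g_{t_n}\to g_{t_\infty}$ in the $C^\infty$ topology. In particular $\{g_{t_n}\}$ is a compact family of metrics with curvature $\K<-1$, so the hypotheses on the source data required by Corollary \ref{convergenza} are met.

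Next I would transport the compactness of $K$ to the ambient spacetimes. The image $\Phi^{\pm}(b_n)=(h_l^n,h_r^n)$ lies in the compact set $K$, hence, up to a further subsequence, converges to a pair $(h_l^\infty,h_r^\infty)\in K$. By Mess's parameterisation each such pair determines a GHMC $AdS_3$ structure, and the key analytic point is to upgrade convergence in $\dT$ to $C^\infty$ convergence of the corresponding Lorentzian metrics $h_n\to h_\infty$ on the fixed manifold $S\times\R$. Concretely, convergence in $\dT$ amounts to convergence of the conjugacy classes of the holonomy representations $\rho_n\to\rho_\infty$; the associated domains of dependence $D(\rho_n)$ then converge, and choosing developing maps compatibly yields convergence of the quotient metrics. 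I expect this to be the main obstacle, since it requires the GHMC structure to depend continuously (in the $C^\infty$ topology on $S\times\R$) on its Teichm\"uller parameters, together with some care with the ``up to isotopy'' ambiguity inherent in the parameterisation.

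With $h_n\to h_\infty$ established, Corollary \ref{convergenza} applies and gives, up to subsequence, $C^\infty$ convergence of the isometric embeddings $f_n$ to an isometric embedding $f_\infty$ into $M_\infty=(S\times\R,h_\infty)$. Since the family $\{g_{t_n}\}$ is compact, the curvatures of the surfaces are uniformly bounded above by $-1-3\e$ for some $\e>0$, so the limit remains uniformly elliptic and therefore future-convex (resp. past-convex); thus the associated operator $b_\infty$ lies in $I(S,g_{t_\infty})^{\pm}\subset W^{\pm}$, and the convergence $f_n\to f_\infty$ in $C^\infty$ is precisely the convergence $b_n\to b_\infty$ in $W^{\pm}$. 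Finally, continuity of $\Phi^{\pm}$ together with $K$ being closed forces $\Phi^{\pm}(b_\infty)=(h_l^\infty,h_r^\infty)\in K$, so the limit lies in $(\Phi^{\pm})^{-1}(K)$. This shows $(\Phi^{\pm})^{-1}(K)$ is sequentially compact, hence compact, and $\Phi^{\pm}$ is proper. As noted, the only genuinely delicate step is the passage from convergence of the Teichm\"uller data to $C^\infty$ convergence of the ambient $AdS_3$ metrics, which is exactly what makes Corollary \ref{convergenza} applicable.
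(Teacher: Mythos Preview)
Your proposal is correct and follows essentially the same route as the paper: pass to a subsequence so that the Teichm\"uller parameters converge, interpret this as convergence of the ambient GHMC $AdS_3$ metrics, and then invoke Corollary~\ref{convergenza} to extract a convergent subsequence of embeddings. The paper's proof is terser and simply asserts that convergence in $\dT$ means the sequence of GHMC manifolds ``is convergent'' before applying Corollary~\ref{convergenza}; you are right to flag the passage from Teichm\"uller convergence to $C^\infty$ convergence of the Lorentzian metrics on $S\times\R$ as the one step requiring care, and your outline of how to handle it (via continuity of the domain of dependence and developing maps in the holonomy) is the standard way to justify what the paper leaves implicit.
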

\begin{proof}We prove the claim for the function $\Phi^{-}$, the other case being analogous. Let $(h_{l}(g_{t_{n}}, b_{t_{n}}), h_{r}(g_{t_{n}}, b_{t_{n}}))\in \dT$ be a convergent sequence in the image of the map $\Phi^{-}$. This means that the sequence of GHMC $AdS_{3}$ manifolds $M_{n}$ parametrised by $(h_{l}(g_{t_{n}}, b_{t_{n}}), h_{r}(g_{t_{n}}, b_{t_{n}}))$ is convergent. By definition of the map $\Phi^{-}$, each $M_{n}$ contains an embedded future-convex, space-like surface isometric to $(S, g_{t_{n}})$, whose immersion $f_{n}$ into $M_{n}$ is represented by the Codazzi operator $b_{t_{n}}$. By Corollary \ref{convergenza}, the sequence of isometric immersions $f_{n}$ is convergent up to subsequences, thus $\Phi^{-}$ is proper. 
\end{proof}

 This allows us to show that for every metric $g_{-}$ and for every smooth path of metrics $\{g_{t}^{+}\}_{t \in [0,1]}$ on $S$ with curvature $\K <-1$ the intersection between $\Phi^{+}(W^{+})$ and $\phi^{-}_{g_{-}}(I(S, g_{-})^{-})$ is compact. This will follow combining some technical results about the geometry of $AdS_{3}$ manifolds and length-spectrum comparisons.
 
\begin{defi} Let $g$ be a metric with negative curvature on $S$. We define the length function
\[
	\ell_{g}: \pi_{1}(S) \rightarrow \R^{+} 
\]
which associates to every homotopy non-trivial loop on $S$, the length of its $g$-geodesic representative.
\end{defi}

We recall that when $g$ is a hyperbolic metric, Thurston proved (see e.g. \cite{lavoriThurston}) that the length function can be extended uniquely to a function on the space of measured geodesic laminations on $S$, which we still denote with $\ell_{g}$.\\

We will need the following technical results: 

\begin{lemma}[Lemma 9.6 in \cite{landslide2}]\label{stimametriche} Let $N$ be a globally hyperbolic compact $AdS_{3}$-manifold foliated by future-convex space-like surfaces. Then, the sequence of metrics induced on each surface decreases when moving towards the past. In particular, if $\Sigma_{1}$ and $\Sigma_{2}$ are two future-convex space-like surfaces with $\Sigma_{1}$ in the future of $\Sigma_{2}$, then for every closed geodesic $\gamma$ in $\Sigma_{1}$ we have
\[
	\ell_{g_{2}}(\gamma')\leq \ell_{g_{1}}(\gamma) \ ,
\]
where $\gamma'$ is the closed geodesic on $\Sigma_{2}$ homotopic to $\gamma$ and $g_{1}$ and $g_{2}$ are the induced metric on $\Sigma_{1}$ and $\Sigma_{2}$, respectively. 
\end{lemma}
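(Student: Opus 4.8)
The plan is to realize the claimed monotonicity as an infinitesimal statement about the first variation of the induced metric along the foliation, and then to deduce the length comparison by projecting a closed geodesic from the upper leaf down to the lower one along the normal flow.

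First I would fix the geometric setup. Since $N$ is globally hyperbolic and foliated by future-convex space-like surfaces, I identify the leaves with a fixed copy of $S$ by means of the normal flow: I denote by $\nu_{t}$ the future-directed unit normal of the leaf $\Sigma_{t}$ and reparametrize the foliation so that the leaves evolve by $\partial_{t}x_{t}=\varphi_{t}\nu_{t}$ for a positive lapse $\varphi_{t}>0$, with increasing $t$ meaning moving towards the future. For $t_{1}>t_{2}$ this flow provides a diffeomorphism $\Phi:\Sigma_{t_{1}}\to\Sigma_{t_{2}}$ carrying each point to the leaf below it along the normal direction, and it pulls back the induced metrics to a smooth family $I_{t}$ of Riemannian metrics on the fixed surface $S$.

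The core computation is the first variation of the induced metric. Differentiating the family $I_{t}=x_{t}^{*}\langle\cdot,\cdot\rangle$, where $x_{t}:S\to\Sigma_{t}$ is the flow parametrization, and using that $\nu_{t}$ is orthogonal to the leaves so that the derivatives of the lapse drop out, I obtain \[ \partial_{t}I_{t}(X,Y)=2\varphi_{t}\,\mathrm{II}_{t}(X,Y), \] where $\mathrm{II}_{t}$ is the second fundamental form of $\Sigma_{t}$ computed with the future-directed normal. By the convexity convention recalled in Section \ref{embeddings}, a future-convex leaf has a shape operator of fixed sign, so $\mathrm{II}_{t}$ is definite; after pinning down the sign (see below) one sees that $\partial_{t}I_{t}$ is positive definite, i.e. the induced metric increases as a quadratic form towards the future and decreases towards the past. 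In particular, since $\varphi_{t}>0$, for $t_{1}>t_{2}$ one gets the pointwise comparison $I_{t_{2}}\leq I_{t_{1}}$ on $S$.

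Finally I would translate this into the length estimate. Let $\gamma$ be a closed geodesic on $\Sigma_{1}=\Sigma_{t_{1}}$; its image $\Phi(\gamma)$ on $\Sigma_{2}=\Sigma_{t_{2}}$ is a closed curve freely homotopic to $\gamma$, hence to $\gamma'$, because the normal flow is isotopic to the identity. The pointwise inequality $I_{t_{2}}\leq I_{t_{1}}$ gives \[ \ell_{g_{2}}(\Phi(\gamma))=\int\sqrt{I_{t_{2}}(\dot{\gamma},\dot{\gamma})}\;\leq\;\int\sqrt{I_{t_{1}}(\dot{\gamma},\dot{\gamma})}=\ell_{g_{1}}(\gamma), \] and since $\gamma'$ is the length-minimizing representative in its homotopy class on $\Sigma_{2}$ we have $\ell_{g_{2}}(\gamma')\leq\ell_{g_{2}}(\Phi(\gamma))$. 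Chaining the two inequalities yields $\ell_{g_{2}}(\gamma')\leq\ell_{g_{1}}(\gamma)$, as claimed. I expect the sign determination in the variation formula to be the one genuinely delicate point, since the outcome depends on the convention for the shape operator (future- versus past-directed normal) and on correctly handling the Lorentzian signature; the cleanest way to lock the sign is to test the formula against the explicit foliation by hyperboloids of constant radius in a Minkowski chart, where $I_{r}=r^{2}g_{\mathbb{H}^{2}}$ and the monotonicity towards the past is immediate.
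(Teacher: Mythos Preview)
The paper does not give its own proof of this statement; it simply quotes it as Lemma~9.6 of \cite{landslide2}. Your argument is correct and is the standard one (and is essentially what the cited reference does): the first-variation identity $\partial_{t}I_{t}=2\varphi_{t}\,\mathrm{II}_{t}$, together with the definiteness of $\mathrm{II}_{t}$ on a strictly convex leaf, gives pointwise monotonicity of the pulled-back metrics, and the length inequality follows by projecting $\gamma$ along the flow and then minimizing in the free homotopy class on $\Sigma_{2}$.

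One word of caution on your proposed sign check. The Minkowski hyperboloids $\{-t^{2}+|x|^{2}=-r^{2}\}$ come in two sheets: the sheet in the \emph{past} cone is past-convex (its past is the affine-convex side), so the correct future-convex test case is the sheet in the \emph{future} cone, where $r$ increases towards the future and $I_{r}=r^{2}g_{\mathbb{H}^{2}}$ indeed decreases towards the past, matching the lemma. Equivalently, in the paper's convention (future-convex $\Rightarrow$ $B$ negative definite with the future normal) the Lorentzian sign $\langle\nu,\nu\rangle=-1$ forces $\mathrm{II}=-I(B\cdot,\cdot)$, so $\mathrm{II}>0$ on a future-convex leaf and $\partial_{t}I_{t}>0$, confirming your claimed direction of monotonicity.
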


\begin{lemma}\label{compactmetric} Let $g_{n}$ be a compact family of smooth metrics on $S$ with curvature less than $-1$. Let $m_{n}$ be a family of hyperbolic metrics such that
\[
	\ell_{g_{n}}(\gamma) \leq \ell_{m_{n}}(\gamma)
\]
for every $\gamma \in \pi_{1}(S)$. Then $m_{n}$ lies in a compact subset of the Teichm\"uller space of $S$.
\end{lemma}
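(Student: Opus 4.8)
The plan is to reduce the statement to a uniform positive lower bound on the systoles of the metrics $m_{n}$ and then invoke Mumford's compactness criterion, which asserts that a family of hyperbolic metrics whose systoles are bounded below by a fixed positive constant remains in a compact subset of $\T(S)$. The hypothesis $\ell_{g_{n}}(\gamma)\le \ell_{m_{n}}(\gamma)$ transfers such a bound from the $g_{n}$ to the $m_{n}$, so the heart of the matter is to produce a uniform lower bound on the length spectrum of the compact family $\{g_{n}\}$.

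First I would establish the following claim: there exists $\ell_{0}>0$ such that $\ell_{g_{n}}(\gamma)\ge \ell_{0}$ for every $n$ and every homotopically nontrivial $\gamma\in\pi_{1}(S)$. Fix a reference metric $g_{*}$ in the $C^{\infty}$-closure $K$ of $\{g_{n}\}$, which is compact by hypothesis. By compactness of $K$ and of the unit tangent bundle of $(S,g_{*})$, the continuous, strictly positive function $(x,v,g)\mapsto g_{x}(v,v)/(g_{*})_{x}(v,v)$ is bounded between $C^{-1}$ and $C$ for some $C\ge 1$, so that $C^{-1}g_{*}\le g\le C\,g_{*}$ as quadratic forms for every $g\in K$. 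Consequently the $g$-length and the $g_{*}$-length of any fixed curve differ by at most a factor $C^{1/2}$; passing to length-minimizing (geodesic) representatives then gives $\ell_{g}(\gamma)\ge C^{-1/2}\ell_{g_{*}}(\gamma)$ for all $g\in K$ and all $\gamma$. Since $g_{*}$ is a smooth metric on a closed surface, its systole $s_{*}=\inf_{\gamma}\ell_{g_{*}}(\gamma)$ is strictly positive (loops shorter than twice the injectivity radius of $g_{*}$ are contractible), and setting $\ell_{0}=C^{-1/2}s_{*}$ proves the claim.

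Combining the claim with the hypothesis yields $\ell_{m_{n}}(\gamma)\ge \ell_{g_{n}}(\gamma)\ge \ell_{0}$ for every nontrivial $\gamma$, hence $\mathrm{sys}(m_{n})\ge \ell_{0}$ for all $n$. Mumford's compactness criterion then shows that the $m_{n}$ remain in a compact subset of $\T(S)$, which is the assertion of the lemma.

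I expect the only genuine obstacle to lie in the uniform bilipschitz comparison of the first step: one must use that ``compact family'' is meant in the $C^{\infty}$ (or at least $C^{0}$) topology, so that the constant $C$ can be chosen uniformly in $n$, and one must order the comparison correctly, first bounding the lengths of arbitrary representatives and only afterwards passing to the $g$- and $g_{*}$-geodesics (so that the minimizing property is used in the right direction). Once the uniform systole bound for $\{g_{n}\}$ is secured, the transfer to $\{m_{n}\}$ through the length inequality and the appeal to Mumford's theorem are immediate.
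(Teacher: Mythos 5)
Your reduction to a systole bound fails at the last step, and the failure is fatal rather than cosmetic. Mumford's compactness criterion asserts that the locus of hyperbolic structures with systole $\geq \ell_{0}$ is compact in the \emph{moduli space} $\T(S)/\mathrm{MCG}(S)$, not in the Teichm\"uller space $\T(S)$. The $\ell_{0}$-thick part of $\T(S)$ is invariant under the mapping class group, which is infinite and acts properly discontinuously, so it is never compact: for a fixed hyperbolic metric $m$ and any infinite sequence $\varphi_{n}$ of mapping classes, the marked metrics $\varphi_{n}^{*}m$ all have the same systole yet leave every compact subset of $\T(S)$. Since your argument retains from the hypothesis only the unmarked quantity $\mathrm{sys}(m_{n})\geq \ell_{0}$, it cannot distinguish $m_{n}$ from $\varphi_{n}^{*}m_{n}$ and therefore cannot conclude compactness of the \emph{marked} structures, which is what the lemma claims and what the paper later uses. (Your first step, the uniform bilipschitz comparison giving $\ell_{g_{n}}(\gamma)\geq C^{-1/2}\ell_{g_{*}}(\gamma)\geq \ell_{0}$, is correct; contrary to your closing remark, that is not where the difficulty lies.)

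The marked information you discarded is exactly what the paper's proof exploits. There one takes the hyperbolic metric $h_{n}$ in the conformal class of $g_{n}$; a maximum-principle argument applied to the conformal factor yields $\ell_{h_{n}}(\gamma)\leq \sqrt{|\K|}\,\ell_{g_{n}}(\gamma)$ for every $\gamma$, where $\K$ is the infimum of the curvatures of the family. The bound $\ell_{h_{n}}\leq \sqrt{|\K|}\,\ell_{g_{n}}$ with $\{g_{n}\}$ compact places $h_{n}$ in a compact set of $\T(S)$ (lengths bounded \emph{above} is the easy, ``forward'' properness of length functions), and combining it with the hypothesis $\ell_{g_{n}}\leq \ell_{m_{n}}$ gives a uniform bound $d_{Th}(h_{n},m_{n})\leq \log\sqrt{|\K|}$ on Thurston's asymmetric distance; Thurston's properness theorem (the distance from a compact set to a divergent sequence tends to $+\infty$) then pins $m_{n}$ in a compact set. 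If you want to salvage your route, you must keep the full marked inequality $\ell_{m_{n}}(\gamma)\geq C^{-1/2}\ell_{g_{*}}(\gamma)$ for \emph{every} $\gamma$ (which your bilipschitz step does provide) and then prove or cite that such a ``backward Thurston ball'' $\{m \ : \ \ell_{m}(\gamma)\geq c\,\ell_{g_{*}}(\gamma) \ \forall \gamma\}$ is compact in $\T(S)$ --- a statement of the same nature as the one the paper quotes from Thurston, and strictly stronger than Mumford's criterion.
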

\begin{proof} The idea is to use Thurston asymmetric metric on Teichm\"uller space. To this aim, we will deduce from the hypothesis a comparison between the length spectrum of $m_{n}$ and that of the hyperbolic metrics $h_{n}$ in the conformal class of $g_{n}$. \\
\indent Let $\K<-1$ be the infimum of the curvatures of the family $g_{n}$. Since $g_{n}$ is a compact family, $\K> -\infty$. Let $\bar{g}_{n}=-\frac{1}{\K}h_{n}$ be the metrics of constant curvature $\K$ in the conformal class of $g_{n}$. We claim that 
\[
	\ell_{h_{n}}(\gamma) \leq \sqrt{|\K|}\ell_{m_{n}}(\gamma)
\]
for every $\gamma \in \pi_{1}(S)$. For instance, if we write $\bar{g}_{n}=e^{2u_{n}}g_{n}$, the smooth function $u_{n}: S \rightarrow \R$ satisfies the differential equation
\[
	e^{2u_{n}(x)}\K= \K_{g_{n}}(x)+ \Delta_{g_{n}}u_{n}(x) \ ,
\]
where $\K_{g_{n}}$ is the curvature of $g_{n}$. Since $\K_{g_{n}}\geq \K$, $\Delta_{g_{n}}u_{n}$ is positive at the point of maximum of $u_{n}$ and $\K<-1$, we deduce that $e^{2u_{n}}\leq 1$, hence
\[
	\ell_{\bar{g}_{n}}(\gamma)\leq \ell_{g_{n}}(\gamma)
\]
for every $\gamma \in \pi_{1}(S)$. It is then clear that
\[
	\ell_{\bar{g}_{n}}(\gamma)=\frac{1}{\sqrt{\K}}\ell_{h_{n}}(\gamma)
\]
for every $\gamma \in \pi_{1}(S)$ and the claim follows. \\
\indent Moreover, by the inequality
\[
	\ell_{h_{n}}(\gamma) \leq \sqrt{|\K|} \ell_{g_{n}}(\gamma) \ \ \ \  \forall \ \gamma \in \pi_{1}(S)
\]
we deduce that $h_{n}$ is contained in a compact set of Teichm\"uller space: if that were not the case, there would exists a curve $\gamma$ such that $\ell_{h_{n}}(\gamma)\xrightarrow{n \to \infty} +\infty$, which is impossible because $g_{n}$ is a compact family. \\
\indent We can conclude now using Thurston asymmetric metric: given two hyperbolic metrics $h$ and $h'$, Thurston asymmetric distance between $h$ and $h'$ is defined as
\[
	d_{Th}(h, h')=\sup_{\gamma \in \pi_{1}(S)}\log\left(\frac{\ell_{h}(\gamma)}{\ell_{h'}(\gamma)}\right) \ .
\]
It is well-known (\cite{Thurston}) that if $h'_{n}$ is a divergent sequence than $d_{Th}(K, h_{n}) \to + \infty$, where $K$ is any compact set in Teichm\"uller space. Now, by the length spectrum comparison 
\[
	 \ell_{h_{n}}(\gamma) \leq \sqrt{|\K|}\ell_{m_{n}}(\gamma)   \ \ \ \  \forall \ \gamma \in \pi_{1}(S) \ ,
\]
we deduce that $d_{Th}(h_{n}, m_{n}) \leq \log(\sqrt{|\K|})< +\infty$, hence $m_{n}$ must be contained in a compact set. 
\end{proof}

We will need also the following fact about the geometry of the convex core of a GHMC $AdS_{3}$ manifold.

\begin{lemma}[Prop. 5 in \cite{Diallo}] \label{stimalaminazioni} Let $M$ be a GHMC $AdS_{3}$ manifold. Denote by $m^{+}$ and $m^{-}$ the hyperbolic metrics on the upper and lower boundary of the convex core of $M$. Let $\lambda^{+}$ and $\lambda^{-}$ be the measured geodesic laminations on the upper and lower boundary of the convex core of $M$. For all $\epsilon>0$, there exists some $A>0$ such that, if $m^{+}$ is contained in a compact set and $\ell_{m^{+}}(\lambda^{+}) \geq A$, then $\ell_{m^{-}}(\lambda^{+})\leq \epsilon \ell_{m^{+}}(\lambda^{+})$.
\end{lemma}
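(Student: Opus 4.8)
The plan is to rewrite the statement as a comparison of length functions and then exploit Mess's earthquake description of the convex core together with the monotonicity of the induced metrics in a globally hyperbolic foliation. First I would recall from the parametrisation of Section \ref{parametrization} that the left and right hyperbolic metrics are obtained from the convex core data by earthquakes,
\[
	h_{l}=E_{l}^{\lambda^{+}}(m^{+})=E_{r}^{\lambda^{-}}(m^{-}), \qquad h_{r}=E_{r}^{\lambda^{+}}(m^{+})=E_{l}^{\lambda^{-}}(m^{-}) \ .
\]
Since an earthquake preserves the length of its own support, this yields $\ell_{h_{l}}(\lambda^{+})=\ell_{h_{r}}(\lambda^{+})=\ell_{m^{+}}(\lambda^{+})$, and, inverting the relations on the lower boundary, expresses the lower metric as $m^{-}=E_{l}^{\lambda^{-}}(h_{l})=E_{r}^{\lambda^{-}}(h_{r})$. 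Thus the quantity to estimate, $\ell_{m^{-}}(\lambda^{+})$, is exactly the length of $\lambda^{+}$ after earthquaking $h_{l}$ along the \emph{other} bending lamination $\lambda^{-}$.

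The crude inequality $\ell_{m^{-}}(\lambda^{+})\le \ell_{m^{+}}(\lambda^{+})$ (that is, the claim with $\epsilon=1$) I would obtain directly from Lemma \ref{stimametriche}: approximating each boundary component of the convex core by smooth convex space-like surfaces of the constant-curvature foliation and using that the induced metric, and hence every closed-geodesic length, decreases towards the past. The real content is to upgrade the constant from $1$ to an arbitrarily small $\epsilon$ once the bending $\ell_{m^{+}}(\lambda^{+})$ is large. For this I would argue by contradiction and compactness. Suppose there are $\epsilon_{0}>0$ and a sequence of GHMC manifolds $M_{n}$ with $m_{n}^{+}$ in a fixed compact subset of $\T(S)$, with $\ell_{m_{n}^{+}}(\lambda_{n}^{+})\to +\infty$, but with $\ell_{m_{n}^{-}}(\lambda_{n}^{+})>\epsilon_{0}\,\ell_{m_{n}^{+}}(\lambda_{n}^{+})$. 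Normalising so that $\ell_{m_{n}^{+}}(\lambda_{n}^{+})=1$, the compactness of $\{m_{n}^{+}\}$ forces the normalised laminations into a compact subset of the space of measured geodesic laminations, so up to subsequence $m_{n}^{+}\to m_{\infty}^{+}$ and the normalised $\lambda_{n}^{+}$ converge, while the underlying bending weights diverge.

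I would then analyse this degeneration. As the weight of $\lambda_{n}^{+}$ blows up, the metrics $h_{l,n},h_{r,n}=E_{l/r}^{\lambda_{n}^{+}}(m_{n}^{+})$ leave every compact set of $\T(S)$ and converge projectively to $[\lambda^{+}]$ in Thurston's compactification, and Kerckhoff's cosine formula controls $\ell_{m_{n}^{-}}(\lambda_{n}^{+})=\ell_{E_{l}^{\lambda_{n}^{-}}(h_{l,n})}(\lambda_{n}^{+})$ in terms of $\ell_{h_{l,n}}(\lambda_{n}^{+})$ and the intersection data $i(\lambda_{n}^{+},\lambda_{n}^{-})$ together with the crossing angles of the two laminations. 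The genuinely Anti-de Sitter input, which I expect to drive the collapse, is that the time-width of the convex core is bounded by $\pi/2$ and that flowing along the spacelike normal from the upper to the lower boundary contracts lengths in the pleated directions by trigonometric factors that degenerate as the width approaches $\pi/2$; heavy bending along $\lambda^{+}$ should push the width, in the directions carrying the measure of $\lambda^{+}$, towards $\pi/2$, producing exactly the contraction $\ell_{m^{-}}(\lambda^{+})\le \epsilon\,\ell_{m^{+}}(\lambda^{+})$ and contradicting the standing assumption.

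The main obstacles I anticipate are twofold. First, the convex core boundary is only pleated, not smooth, so every metric estimate and every normal-flow contraction must be justified by approximation with the smooth constant-curvature surfaces, in the spirit of Corollary \ref{convergenza}. Second, and more seriously, one must control the interplay of the two bending laminations $\lambda^{+}$ and $\lambda^{-}$: it is the earthquake along $\lambda^{-}$, governed by $i(\lambda^{+},\lambda^{-})$ and by the relative angles at which the leaves cross, that decides whether the length of $\lambda^{+}$ actually collapses, and pinning down this interaction quantitatively (rather than merely bounding it by $i(\lambda^{+},\lambda^{-})$) is where I expect the bulk of the technical work to lie.
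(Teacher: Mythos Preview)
The paper does not prove this lemma: it is stated with attribution to Proposition~5 of \cite{Diallo} and no argument is given in the present paper. There is therefore nothing here to compare your attempt against; the result is simply imported as a black box.

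On your sketch itself: it is explicitly a plan rather than a proof, and you correctly flag the main missing ingredient, namely a quantitative control of how the earthquake along $\lambda^{-}$ affects the length of $\lambda^{+}$. One point deserves care even at the level of the outline. Your route to the crude bound $\ell_{m^{-}}(\lambda^{+})\le \ell_{m^{+}}(\lambda^{+})$ via Lemma~\ref{stimametriche} is not immediate: that lemma concerns a foliation by \emph{future-convex} surfaces, and the constant-curvature foliations live in the past and in the future of the convex core, not in the region between its two pleated boundaries; the upper boundary $\partial_{+}C(M)$ is past-convex, so the monotonicity you invoke does not directly link $m^{+}$ to $m^{-}$ across the core. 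The heuristic about the time-width approaching $\pi/2$ and the resulting trigonometric contraction is the right geometric picture, but turning it into an estimate is precisely the content of Diallo's argument, which you would need to reproduce or consult.
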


\begin{prop}\label{compact}For every metric $g^{-}$ and for every smooth path of metrics $\{g_{t}^{+}\}_{t \in [0,1]}$ on $S$ with curvature $\K < -1$, the set $(\Phi^{+}\times \phi^{-}_{g^{-}})^{-1}(\Delta)$ is compact. 
\end{prop}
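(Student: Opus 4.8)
The plan is to reduce the statement to a compactness statement about the image in $\dT$, and then to control that image through the geometry of the convex core. A pair $(b^{+}, b^{-}) \in W^{+} \times I(S,g^{-})^{-}$ lies in $(\Phi^{+}\times \phi^{-}_{g^{-}})^{-1}(\Delta)$ exactly when $\Phi^{+}(b^{+}) = \phi^{-}_{g^{-}}(b^{-})$, i.e. the two isometric embeddings determine the same GHMC $AdS_{3}$ manifold $M$, with common holonomy $(h_{l}, h_{r}) \in \dT$. This set is closed, being the preimage of the closed diagonal. Hence, by the properness of $\Phi^{+}$ and $\phi^{-}_{g^{-}}$ (Corollary \ref{properdef}), it suffices to show that the set of common images $(h_{l}, h_{r})$ is contained in a compact subset $\mathcal{K} \subset \dT$: then $(\Phi^{+}\times \phi^{-}_{g^{-}})^{-1}(\Delta)$ is a closed subset of the compact set $(\Phi^{+})^{-1}(\mathcal{K}) \times (\phi^{-}_{g^{-}})^{-1}(\mathcal{K})$.

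So I take a sequence $M_{n}$ of such manifolds, each containing a past-convex surface $\Sigma^{+}_{n}$ isometric to $(S, g^{+}_{t_{n}})$ and a future-convex surface $\Sigma^{-}_{n}$ isometric to $(S, g^{-})$, and I must bound the boundary metrics $m^{\pm}_{n}$ of the convex core $C(M_{n})$ together with their bending laminations $\lambda^{\pm}_{n}$. First I would locate the two surfaces with respect to the convex core: a future-convex surface with curvature bounded above by $-1-\delta$ lies in the past of $\partial_{-}C(M_{n})$ (by the position proposition applied to the constant-curvature foliation), and symmetrically $\Sigma^{+}_{n}$ lies in the future of $\partial_{+}C(M_{n})$. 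Applying Lemma \ref{stimametriche} (and its time-reversed version) then gives the length comparisons
\[
	\ell_{g^{-}}(\gamma) \le \ell_{m^{-}_{n}}(\gamma), \qquad \ell_{g^{+}_{t_{n}}}(\gamma) \le \ell_{m^{+}_{n}}(\gamma) \qquad \forall\, \gamma \in \pi_{1}(S).
\]
Since $g^{-}$ is fixed and $\{g^{+}_{t}\}_{t\in[0,1]}$ is a compact family of metrics with curvature less than $-1$, Lemma \ref{compactmetric} yields that both $m^{+}_{n}$ and $m^{-}_{n}$ stay in compact subsets of $\T(S)$.

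It remains to bound the bending laminations, and this is where I would use Lemma \ref{stimalaminazioni}. Since $m^{+}_{n}$ and $m^{-}_{n}$ both lie in a compact set $\mathcal{C} \subset \T(S)$, their Thurston distances are uniformly bounded, so there is a constant $C>0$ with $\ell_{m^{-}_{n}}(\mu) \ge C^{-1}\ell_{m^{+}_{n}}(\mu)$ for every measured lamination $\mu$. Suppose, for contradiction, that $\ell_{m^{+}_{n}}(\lambda^{+}_{n}) \to +\infty$. Applying Lemma \ref{stimalaminazioni} with $\epsilon = (2C)^{-1}$ gives, for $n$ large, $\ell_{m^{-}_{n}}(\lambda^{+}_{n}) \le (2C)^{-1}\ell_{m^{+}_{n}}(\lambda^{+}_{n})$, contradicting the uniform comparison. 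Hence $\ell_{m^{+}_{n}}(\lambda^{+}_{n})$ is bounded; together with the compactness of $\{m^{+}_{n}\}$ this forces $\lambda^{+}_{n}$ into a compact subset of the space of measured laminations. Since $h_{l} = E_{l}^{\lambda^{+}}(m^{+})$ and $h_{r} = E_{r}^{\lambda^{+}}(m^{+})$ and the earthquake map is continuous, the pairs $(h_{l}, h_{r})$ remain in a compact subset of $\dT$, as required.

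The main obstacle is the bound on the bending lamination in the last paragraph: controlling $m^{\pm}_{n}$ alone does not suffice, since an unbounded earthquake could still push a bounded metric off to infinity in $\T(S)$. The content of Lemma \ref{stimalaminazioni} — that a long bending lamination on one boundary becomes proportionally short on the other — is precisely what rules this out once the two boundary metrics are known to be comparable. The remaining points (the exact position of $\Sigma^{\pm}_{n}$ relative to the convex core and the applicability of Lemma \ref{stimametriche} up to the pleated boundary leaf) are geometrically routine, but should be stated with some care.
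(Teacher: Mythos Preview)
Your proposal is correct and follows essentially the same route as the paper: bound $m^{\pm}_{n}$ via Lemmas \ref{stimametriche} and \ref{compactmetric}, then use Lemma \ref{stimalaminazioni} together with a Thurston-metric comparison between $m^{+}_{n}$ and $m^{-}_{n}$ to bound the bending lamination, and conclude through the earthquake description of the holonomy. The only cosmetic differences are that you front-load the reduction to compactness of the image via properness (Corollary \ref{properdef}), whereas the paper finishes by invoking Corollary \ref{convergenza} directly, and that you run the contradiction in the reverse order (comparison first, then Lemma \ref{stimalaminazioni}) --- but the content is the same.
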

\begin{proof}We need to prove that every sequence of isometric embeddings $(b^{+}_{t_{n}}, b^{-}_{n})$ in $(\Phi^{+} \times \phi^{-}_{g^{-}})^{-1}(\Delta)$ admits a convergent subsequence. By definition, for every $n \in \mathbb{N}$, there exists a GHMC $AdS_{3}$ manifold $M_{n}$ containing a past-convex surface isometric to $(S, g^{+}_{t_{n}})$ with shape operator $b^{+}_{t_{n}}$ and a future-convex surface isometric to $(S, g^{-})$ with shape operator $b^{-}_{n}$. By Lemma \ref{stimametriche} and Lemma \ref{compactmetric}, the metrics $m_{n}^{+}$ and $m_{n}^{-}$ on the upper and lower boundary of the convex core of $M_{n}$ are contained in a compact set of $\T(S)$ . \\ 
\indent We are going to prove now that the sequences of left and right metrics of $M_{n}$ are contained in a compact set of Teichm\"uller space, as well. Suppose by contradiction that the sequence of left metric $h_{l_{n}}$ of $M_{n}$ is not contained in a compact set. By Mess parameterisation (see Section \ref{parametrization}, or \cite{Mess}), the left metrics are related to the metrics $m_{n}^{+}$ and to the measured geodesic laminations $\lambda_{n}^{+}$ of the upper-boundary of the convex core by an earthquake: 
\[
	h_{l_{n}}=E_{\lambda_{n}^{+}}^{l}(m_{n}^{+}) \ .
\]
Since $h_{l_{n}}$ is divergent, the sequence of measured laminations $\lambda_{n}^{+}$ is divergent, as well. In particular, this implies that $\ell_{m_{n}^{+}}(\lambda_{n}^{+})$ goes to infinity. Therefore, by Lemma \ref{stimalaminazioni}, for every $\epsilon>0$ there exists $n_{0}$ such that the inequality $\ell_{m_{n}^{-}}(\lambda^{+}_{n})\leq \epsilon\ell_{m_{n}^{+}}(\lambda^{+}_{n})$ holds for $n\geq n_{0}$. From this we deduce a contradiction, because we prove that the inequality 
\[
		\ell_{m_{n}^{-}}(\lambda^{+}_{n})\leq \epsilon\ell_{m_{n}^{+}}(\lambda^{+}_{n})  \ \ \ \ \ \forall \ n\geq n_{0}
\]
implies that the sequence $m_{n}^{-}$ is divergent, which contradicts what we proved in the previous paragraph. For instance, if $m_{n}^{-}$ were contained in a compact set of Teichm\"uller space, there would exist (using again Thurston's asymmetric metric) a constant $C>1$ such that
\[
	\frac{\ell_{m_{n}^{+}}(\gamma)}{\ell_{m_{n}^-}(\gamma)} \leq C  \ \ \ \ \forall \ n\geq n_{0} \ .
\]
By density this inequality must hold also for every measured geodesic lamination on $S$. But we have seen that for every $\epsilon >0$ we can find $n_{0}$ such that for every $n \geq n_{0}$ we have
\[
	\frac{\ell_{m_{n}^{+}}(\lambda_{n}^{+})}{\ell_{m_{n}^-}(\lambda_{n}^{+})} \geq \frac{1}{\epsilon} \ ,
\]
thus obtaining a contradiction. \\
A similar argument proves that also the sequence of right metrics $h_{r_{n}}$ must be contained in a compact set of $\T(S)$. \\
\indent Since the sequences of left and right metrics of $M_{n}$ converge, up to subsequence, we can concretely realise the corresponding subsequence $M_{n}$ as $(S \times \R, h_{n})$ such that $h_{n}$ converges in the $C^{\infty}$-topology to an Anti-de Sitter metric $h_{\infty}$ and each $M_{n}$ contains a future-convex space-like surface with embedding data $(g^{-}, b^{-}_{n})$ and a past-convex space-like surface with embedding data $(g^{+}_{t_{n}}, b_{t_{n}}^{+})$. The proof is then completed applying Corollary \ref{convergenza}. 
\end{proof}
    
\section{Prescription of an isometric embedding and half-holonomy representation}\label{halfholonomy}
This section is dedicated to the proof of the following result about the existence of an $AdS_{3}$ manifold with prescribed left metric containing a convex space-like surface with prescribed induced metric:

\begin{prop}\label{halfhol} Let $g$ be a metric on $S$ with curvature less than $-1$ and let $h$ be a hyperbolic metric on $S$. There exists a GHMC $AdS_{3}$ manifold $M$ with left metric isotopic to $h$ containing a past-convex space-like surface isometric to $(S,g)$. 
\end{prop}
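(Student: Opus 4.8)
The plan is to show that the map $p_{1}\circ\phi^{+}_{g}:I(S,g)^{+}\to\T(S)$, which sends a past-convex isometric embedding of $(S,g)$ to the isotopy class of the left metric of the GHMC $AdS_{3}$ manifold containing it, is proper of degree $1$ (mod $2$). Since $I(S,g)^{+}$ and $\T(S)$ both have dimension $6\tau-6$ (Lemma \ref{manifold}), the degree mod $2$ is well defined once properness is established (Remark \ref{defdegree}); and a proper map of nonzero degree between connected manifolds of equal dimension is surjective, because its image is closed and, by Sard's theorem, contains the dense set of regular values, each of which has nonempty preimage when the degree is odd. Surjectivity of $p_{1}\circ\phi^{+}_{g}$ is precisely the statement that every hyperbolic metric $h$ occurs as the left metric of some $M$ containing a past-convex surface isometric to $(S,g)$.

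\textbf{Properness.} Let $b_{n}\in I(S,g)^{+}$ be such that $h_{l_{n}}:=h_{l}(g,b_{n})$ converges in $\T(S)$; I must extract a convergent subsequence of $b_{n}$. Each $b_{n}$ corresponds to a past-convex surface $\Sigma_{n}\cong(S,g)$ in a GHMC manifold $M_{n}$, lying in the future of the upper boundary $\partial_{+}C(M_{n})$ of the convex core. Since $g$ is fixed (in particular a compact family) with curvature $<-1$, the monotonicity of induced metrics along a foliation by past-convex surfaces (Lemma \ref{stimametriche}, with the time orientation reversed) gives $\ell_{g}(\gamma)\leq\ell_{m^{+}_{n}}(\gamma)$ for the hyperbolic metric $m^{+}_{n}$ on $\partial_{+}C(M_{n})$, so by Lemma \ref{compactmetric} the metrics $m^{+}_{n}$ lie in a compact subset of $\T(S)$. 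Using the Mess earthquake relations $h_{l_{n}}=E^{l}_{\lambda^{+}_{n}}(m^{+}_{n})$ and $h_{r_{n}}=E^{r}_{\lambda^{+}_{n}}(m^{+}_{n})$, the boundedness of both $m^{+}_{n}$ and $h_{l_{n}}$ forces the measured laminations $\lambda^{+}_{n}$ to remain in a compact subset of $\mathcal{ML}(S)$ (an earthquake of a metric in a fixed compact set by a diverging lamination diverges in $\T(S)$), whence $h_{r_{n}}=E^{r}_{\lambda^{+}_{n}}(m^{+}_{n})$ is bounded as well. Thus the manifolds $M_{n}$ converge up to subsequence, and Corollary \ref{convergenza} produces a convergent subsequence of the embeddings $b_{n}$.

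\textbf{Degree in the constant curvature case, and cobordism.} When $g$ has constant curvature $\K_{0}<-1$, every GHMC manifold contains a unique past-convex surface of constant curvature $\K_{0}$ (Corollary 4.7 in \cite{folKsurfaces}), and the induced data of these surfaces are governed by the landslide construction of \cite{landslide2}: the assignment $b\mapsto h_{l}(g,b)$ can be analyzed explicitly (it is in fact a diffeomorphism of $I(S,g)^{+}$ onto $\T(S)$), so that $p_{1}\circ\phi^{+}_{g}$ has degree $1$ (mod $2$). For an arbitrary metric $g$ with curvature $<-1$, I connect $g$ to such a constant curvature metric $g_{0}$ by a smooth path $\{g_{t}\}_{t\in[0,1]}$ (the space of metrics with curvature $<-1$ is connected, Lemma 2.3 in \cite{casofuchsiano}) and form the cobordism $W^{+}=\bigcup_{t}I(S,g_{t})^{+}$ together with the smooth map $p_{1}\circ\Phi^{+}:W^{+}\to\T(S)$ (Lemma \ref{manifold2}, Proposition \ref{smoothness}), whose boundary restrictions are $p_{1}\circ\phi^{+}_{g}$ and $p_{1}\circ\phi^{+}_{g_{0}}$. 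The properness argument above applies verbatim to $p_{1}\circ\Phi^{+}$, as it only used that $\{g_{t}\}$ is a compact family; in particular the preimage of a point is compact, and the cobordism invariance of the degree mod $2$ (Proposition \ref{invcobgen} applied to a regular value, as in Remark \ref{defdegree}) shows that $p_{1}\circ\phi^{+}_{g}$ also has degree $1$ (mod $2$). This yields the surjectivity of $p_{1}\circ\phi^{+}_{g}$ and hence the proposition.

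\textbf{Main obstacle.} The delicate point is the properness of $p_{1}\circ\phi^{+}_{g}$: since the projection $p_{1}:\dT\to\T(S)$ is far from proper, controlling the right metric $h_{r_{n}}$ from a bound on the left metric alone requires genuine $AdS_{3}$ geometry — the convex-core comparison of Lemmas \ref{stimametriche} and \ref{compactmetric} together with the earthquake description of the Mess metrics. Everything else amounts to transporting the explicit constant-curvature degree through this proper cobordism.
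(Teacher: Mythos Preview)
Your proposal is correct and follows the paper's own argument step for step: properness via the convex-core comparison (Lemmas~\ref{stimametriche} and \ref{compactmetric}) combined with Mess's earthquake relations to bound the right metric (the paper quotes Lemma~\ref{stimaterremoti} where you give the equivalent parenthetical remark), then the constant-curvature degree computation transported by the proper cobordism $p_1\circ\Phi^+$. The only divergence is in the constant-curvature step: where you assert that $p_1\circ\phi^+_g$ is a diffeomorphism (which is true, via Corollary~4.7 of \cite{folKsurfaces} and the landslide bijection of \cite{landslide1}), the paper instead works at a single point, exhibiting the unique preimage $b=\sqrt{-\K-1}\,E$ of $-\K g\in\T(S)$ and verifying by an explicit differential computation (using \cite{Tromba}) that it is a regular value---more self-contained, but to the same effect.
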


If we denote with  
\[
	p_{1}: \dT \rightarrow \T(S)
\]
the projection onto the left factor, Propostition \ref{halfhol} is equivalent to proving that the map $p_{1}\circ\phi^{+}_{g}: I(S,g)^{+}\rightarrow \T(S)$ is surjective. After showing that $p_{1}\circ\phi^{+}_{g}$ is proper (Corollary \ref{properdefproj}), this will follow from the fact that its degree (mod $2$) is non-zero.\\
\\
\indent In order to prove properness of the map $p_{1}\circ\phi^{+}_{g}$, we will need the following well-known result about the behaviour of the length function while performing an earthquake. 
\begin{lemma}[Lemma 7.1 in \cite{terremoti}]\label{stimaterremoti}Given a geodesic lamination $\lambda\in \mathpzc{M}\mathpzc{L}(S)$ and a hyperbolic metric $g\in \T(S)$, let $g'=E_{l}^{\lambda}(g)$. Then for every closed geodesic $\gamma$ in $S$ the following estimate holds
\[
	\ell_{g}(\gamma)+\ell_{g'}(\gamma)\geq \lambda(\gamma) \ .
\]
\end{lemma}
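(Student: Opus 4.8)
The plan is to reduce to weighted simple multicurves and then work in the universal cover, treating the single–crossing case as a clean trace computation and isolating the multiple–crossing case as the genuine difficulty.

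First I would reduce to the case in which $\lambda=\sum_i w_i c_i$ is a weighted disjoint union of simple closed geodesics. Both $\ell_{E_l^{\lambda}(g)}(\gamma)$ and $\lambda(\gamma)=i(\lambda,\gamma)$ depend continuously on $\lambda\in\mathcal{ML}(S)$ (the earthquake deformation $E_l^{\lambda}(g)$ varies continuously with $\lambda$, and the intersection pairing is continuous), and weighted multicurves are dense in $\mathcal{ML}(S)$; hence it suffices to prove the estimate for multicurves and pass to the limit. We may also assume $i(\lambda,\gamma)>0$, the inequality being trivial otherwise since both lengths are nonnegative.

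Next I would set things up in $\h^{2}$. Lift the $g$-geodesic representative of $\gamma$ to the axis $\tilde\alpha$ of $\rho(\gamma)$, where $\rho\colon\pi_{1}(S)\to\PSL(2,\R)$ is the holonomy of $g$. The preimage of $\lambda$ is a $\rho$-invariant family of complete geodesics, and in one period $\tilde\alpha$ crosses leaves $\ell_{1},\dots,\ell_{k}$ carrying weights $w_{1},\dots,w_{k}$ with $\sum_{j}w_{j}=\lambda(\gamma)$. The left earthquake is realised by a $(\rho,\rho')$-equivariant map of $\h^{2}$ that is an isometry on each complementary region and jumps across $\ell_{j}$ by the left hyperbolic translation $t_{j}$ of length $w_{j}$ along $\ell_{j}$, where $\rho'$ is the holonomy of $g'=E_l^{\lambda}(g)$. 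Combining equivariance with this piecewise description yields the shear relation $\rho'(\gamma)=t_{1}t_{2}\cdots t_{k}\,\rho(\gamma)$. The model case is $k=1$: then $\rho'(\gamma)=t\,\rho(\gamma)$ is a product of two hyperbolic isometries whose axes cross at some angle $\theta$, with translation lengths $w=\lambda(\gamma)$ and $a=\ell_{g}(\gamma)$. The trace identity for such a product gives $2\cosh\tfrac12\ell_{g'}(\gamma)=\bigl|\,2\cosh\tfrac a2\cosh\tfrac w2+2\sinh\tfrac a2\sinh\tfrac w2\cos\theta\,\bigr|$, and the trivial bound $\cos\theta\ge -1$ forces $\cosh\tfrac12\ell_{g'}(\gamma)\ge\cosh\tfrac12(w-a)$, that is $\ell_{g'}(\gamma)\ge\lambda(\gamma)-\ell_{g}(\gamma)$. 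This is exactly the claimed inequality, and it is sharp as $\theta\to\pi$.

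The \textbf{main obstacle} is the case $k\ge 2$, where $\rho'(\gamma)=t_{1}\cdots t_{k}\,\rho(\gamma)$ composes translations along distinct crossing axes. Naive triangle inequalities only bound $\ell_{g'}$ from \emph{above}, and translation length is not subadditive, so one must genuinely rule out cancellation among the shears; this is where the coherence of a \emph{left} earthquake enters, all the $t_{j}$ turning the same way. Concretely, the earthquake-image of one period of $\tilde\alpha$ is a broken geodesic consisting of the isometric geodesic pieces, of total length $\ell_{g}(\gamma)$, joined by segments along the leaves of total length $\lambda(\gamma)$, and the point is that this path does not backtrack. I would make this rigorous by induction on $k$, showing that the fixed points of the partial products $t_{j}\cdots t_{k}\,\rho(\gamma)$ on $\partial_{\infty}\h^{2}$ remain monotonically ordered—using that left shears act monotonically on the circle—so that for $A\in\tilde\alpha$ one obtains $d(A,\rho'(\gamma)A)\ge\lambda(\gamma)-\ell_{g}(\gamma)$, hence $\ell_{g'}(\gamma)\ge\lambda(\gamma)-\ell_{g}(\gamma)$ and the lemma follows. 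Establishing this monotonicity, equivalently the absence of cancellation, is the principal technical point.
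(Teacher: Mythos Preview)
The paper does not prove this lemma: it is quoted from \cite{terremoti} (Bonsante--Schlenker) and used as a black box in the proof of Proposition~\ref{projproper}. There is therefore no argument in the present paper to compare your proposal against.

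On the proposal itself: the reduction to weighted multicurves, the cocycle formula $\rho'(\gamma)=t_{1}\cdots t_{k}\,\rho(\gamma)$, and the trace computation in the single-crossing case $k=1$ are all correct. The problem lies in the concluding step of the $k\ge 2$ sketch. You propose to deduce $\ell_{g'}(\gamma)\ge \lambda(\gamma)-\ell_{g}(\gamma)$ from a lower bound $d(A,\rho'(\gamma)A)\ge \lambda(\gamma)-\ell_{g}(\gamma)$ for $A\in\tilde\alpha$. But the translation length of $\rho'(\gamma)$ is the \emph{infimum} of $A\mapsto d(A,\rho'(\gamma)A)$, achieved on the axis of $\rho'(\gamma)$ rather than on $\tilde\alpha$; a lower bound on the displacement of some particular points gives no lower bound on $\ell_{g'}(\gamma)$. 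Indeed, the broken path you describe has total length $\ell_{g}(\gamma)+\lambda(\gamma)$ and joins $A$ to $\rho'(\gamma)A$, which yields only the opposite estimate $\ell_{g'}(\gamma)\le d(A,\rho'(\gamma)A)\le \ell_{g}(\gamma)+\lambda(\gamma)$. The coherence of the left shears is certainly the right ingredient, and your monotonicity-of-fixed-points heuristic points in the right direction, but turning it into a genuine lower bound on the translation length of $t_{1}\cdots t_{k}\,\rho(\gamma)$ is exactly the substantive work that remains; as written the ``hence'' is a non sequitur.
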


\begin{prop}\label{projproper}For every path of metrics $\{g_{t}\}_{t\in [0,1]}$ with curvature less than $-1$, the projection $p_{1}: \Phi^{+}(W^{+}) \rightarrow \T(S)$ is proper.
\end{prop}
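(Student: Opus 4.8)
The plan is to establish properness of $p_{1}\circ \Phi^{+}: W^{+} \to \T(S)$ directly. I take a sequence of isometric embeddings $b_{t_{n}} \in W^{+}$, each representing a past-convex space-like surface $\Sigma_{n}$ isometric to $(S, g_{t_{n}})$ inside a GHMC manifold $M_{n}$, and I assume that the left metrics $h_{l_{n}} = (p_{1}\circ \Phi^{+})(b_{t_{n}})$ remain in a compact subset of $\T(S)$; the goal is to extract a subsequence of $b_{t_{n}}$ converging in $W^{+}$. Since Corollary \ref{convergenza} already reduces convergence of the embeddings to convergence of the ambient manifolds $M_{n}$, and since $M_{n}$ is parametrised by the pair $(h_{l_{n}}, h_{r_{n}})$ via Mess' theorem, the whole problem reduces to showing that the \emph{right} metrics $h_{r_{n}}$ also remain in a compact subset of $\T(S)$.

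First I would control the geometry of the convex cores. Because $\{g_{t}\}_{t\in[0,1]}$ is a compact family of metrics with curvature less than $-1$ and each $\Sigma_{n}$ is a past-convex surface lying in the future of the convex core of $M_{n}$, the argument already used in Proposition \ref{compact} applies verbatim: the (time-reversed) monotonicity of the induced metrics provided by Lemma \ref{stimametriche}, combined with Lemma \ref{compactmetric}, shows that the hyperbolic metrics $m_{n}^{+}$ on the upper boundary of the convex core of $M_{n}$ lie in a compact subset of $\T(S)$.

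The key step is to bound the bending laminations. By Mess' description, $h_{l_{n}} = E_{l}^{\lambda_{n}^{+}}(m_{n}^{+})$, where $\lambda_{n}^{+}$ is the measured geodesic lamination on the upper boundary of the convex core. Applying Lemma \ref{stimaterremoti} to this earthquake gives, for every closed geodesic $\gamma$,
\[
	i(\lambda_{n}^{+}, \gamma) \leq \ell_{m_{n}^{+}}(\gamma) + \ell_{h_{l_{n}}}(\gamma) \ .
\]
Since both $m_{n}^{+}$ and $h_{l_{n}}$ vary in compact subsets of $\T(S)$, the right-hand side is uniformly bounded as $\gamma$ ranges over a fixed finite filling collection of simple closed curves, so $\lambda_{n}^{+}$ lies in a compact subset of $\mathpzc{M}\mathpzc{L}(S)$. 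Using then $h_{r_{n}} = E_{r}^{\lambda_{n}^{+}}(m_{n}^{+})$ together with the continuity of the earthquake map in both the lamination and the basepoint, the right metrics $h_{r_{n}}$ are forced into a compact subset of $\T(S)$ as well.

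With both $h_{l_{n}}$ and $h_{r_{n}}$ precompact, I pass to a subsequence so that $M_{n}$ can be realised as $(S\times\R, h_{n})$ with $h_{n}\to h_{\infty}$ in the $C^{\infty}$ topology, and Corollary \ref{convergenza} then yields a subsequence of the $b_{t_{n}}$ converging to an isometric embedding, proving properness. I expect the main obstacle to be the passage in the third step from the pointwise estimate of Lemma \ref{stimaterremoti} to genuine precompactness of $\lambda_{n}^{+}$ in $\mathpzc{M}\mathpzc{L}(S)$, and the invocation of continuity of the earthquake flow to transfer compactness of $(m_{n}^{+}, \lambda_{n}^{+})$ to compactness of $h_{r_{n}}$; everything else is a reduction to results already established.
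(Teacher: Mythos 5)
Your proposal is correct and takes essentially the same route as the paper's proof: both control the convex-core metrics $m_{n}^{+}$ via Lemma \ref{stimametriche} and Lemma \ref{compactmetric}, bound the bending laminations $\lambda_{n}^{+}$ using Lemma \ref{stimaterremoti} together with compactness of $m_{n}^{+}$ and $h_{l_{n}}$, and conclude precompactness of the right metrics $h_{r_{n}}=E_{r}^{\lambda_{n}^{+}}(m_{n}^{+})$ from the continuity of the earthquake map. Your extra final step (passing back to the embeddings via Corollary \ref{convergenza}) only adds the properness of $p_{1}\circ\Phi^{+}$ on $W^{+}$, which the paper obtains by combining this proposition with Corollary \ref{properdef}; it is a harmless strengthening, not a different argument.
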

\begin{proof}Let $h_{l}(g_{t_{n}}, b_{t_{n}})$ be a convergent sequence of left metrics. We need to prove that the corresponding sequence of right metrics $h_{r}(g_{t_{n}}, b_{t_{n}})$ is convergent, as well. By hypothesis, $(S,g_{t_{n}})$ is isometrically embedded as past-convex space-like surface in each GHMC $AdS_{3}$ manifold $M_{n}$ parametrised by $(h_{l}(g_{t_{n}}, b_{t_{n}}), h_{r}(g_{t_{n}}, b_{t_{n}}))$.  By Lemma \ref{stimametriche} and Lemma \ref{compactmetric}, the metrics $m_{n}^{+}$ on the past-convex boundary of the convex core of $M_{n}$ are contained in a compact set of $\T(S)$. Moreover, by a result of Mess (\cite{Mess}), the left metrics $h_{l}(g_{t_{n}}, b_{t_{n}})$, the metrics $m_{n}^{+}$ and the measured laminations on the convex core $\lambda^{+}_{n}$ are related by an earthquake
\[
	h_{l}(g_{t_{n}}, b_{t_{n}})=E_{l}^{\lambda_{n}^{+}}(m_{n}^{+}) \ .
\]
Since $h_{l}(g_{t_{n}}, b_{t_{n}})$ is convergent, by Lemma \ref{stimaterremoti}, the sequence of measured laminations $\lambda_{n}^{+}$ must be contained in a compact set. Therefore, by continuity of the right earthquake
	\begin{align*}
	E_{r}: \T(S) &\times \mathpzc{M}\mathpzc{L}(S) \rightarrow \T(S)\\
		(h,\lambda) &\mapsto E_{r}^{\lambda}(h)
	\end{align*}
the sequence 
\[
	h_{r}(g_{t_{n}}, b_{t_{n}})=E_{r}^{\lambda_{n}^{+}}(m_{n}^{+})
\]
is convergent, up to subsequences. 
\end{proof}

 In particular, considering a constant path of metrics, we obtain the following:
\begin{cor}\label{properdefproj}The projection $p_{1}:\phi^{+}_{g}(I(S,g)^{+}) \rightarrow \T(S)$ is proper .
\end{cor}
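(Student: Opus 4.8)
The plan is to obtain this as the special case of Proposition~\ref{projproper} corresponding to a \emph{constant} path of metrics. First I would fix the path $\{g_{t}\}_{t\in[0,1]}$ defined by $g_{t}=g$ for every $t$. Since the metric is the same at all times, the manifold
\[
	W^{+}=\bigcup_{t\in[0,1]}I(S,g_{t})^{+}
\]
is simply the product $I(S,g)^{+}\times[0,1]$, and for each $t$ the restriction of $\Phi^{+}$ to the slice $I(S,g)^{+}\times\{t\}$ coincides with $\phi^{+}_{g}$. In particular $\Phi^{+}(W^{+})=\phi^{+}_{g}(I(S,g)^{+})$, so the projection $p_{1}\colon\Phi^{+}(W^{+})\to\T(S)$ is literally the projection $p_{1}\colon\phi^{+}_{g}(I(S,g)^{+})\to\T(S)$ whose properness we want.

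Applying Proposition~\ref{projproper} to this constant path then yields the claim at once. Unwinding the statement, the content is the following: given a sequence $b_{n}\in I(S,g)^{+}$ whose left metrics $h_{l}(g,b_{n})$ converge in $\T(S)$, the corresponding right metrics $h_{r}(g,b_{n})$ remain in a compact subset of $\T(S)$. This is exactly what the argument of Proposition~\ref{projproper} delivers: confine the metric $m_{n}^{+}$ on the convex-core boundary to a compact set via Lemma~\ref{stimametriche} and Lemma~\ref{compactmetric}, confine the bending lamination $\lambda_{n}^{+}$ via Lemma~\ref{stimaterremoti}, and conclude by continuity of the right earthquake.

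I do not expect any genuine obstacle at this stage: all of the difficulty is already contained in Proposition~\ref{projproper}, and the corollary is nothing more than its specialization to a constant path. The only point worth a single line of care is to observe that passing from the nonconstant-path statement to a single time slice loses no information, which is immediate because along a constant path every slice carries the same map $\phi^{+}_{g}$.
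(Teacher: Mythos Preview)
Your proposal is correct and matches the paper's approach exactly: the paper derives the corollary by simply ``considering a constant path of metrics'' in Proposition~\ref{projproper}. Your additional unwinding of the argument (bounding $m_n^+$, then $\lambda_n^+$, then $h_r$) is faithful to that proposition but not needed for the corollary itself.
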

 
\begin{prop}\label{degree}For every metric $g$ of curvature $\K<-1$, the map 
\[
	p_{1}\circ\phi^{+}_{g}: I(S,g)^{+}\rightarrow \T(S)
\]
is proper of degree $1$ mod $2$.
\end{prop}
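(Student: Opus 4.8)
The plan is to prove Proposition \ref{degree} by exploiting the invariance of the degree (mod $2$) under proper cobordism, which reduces the general statement to a single metric of constant curvature where the map can be analysed explicitly. Properness of $p_{1}\circ\phi^{+}_{g}$ has already been established in Corollary \ref{properdefproj}, so by Remark \ref{defdegree} the degree (mod $2$) is well-defined: for a regular value $h\in \T(S)$ the fibre $(p_{1}\circ\phi^{+}_{g})^{-1}(h)$ is finite, and its cardinality (mod $2$) is independent of the regular value. The key structural observation is that for any two metrics $g$ and $g'$ with curvature $\K<-1$, the space of such metrics is connected (Lemma 2.3 in \cite{casofuchsiano}), so we may choose a smooth path $\{g_{t}\}_{t\in[0,1]}$ joining them. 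By Lemma \ref{manifold2} the set $W^{+}=\bigcup_{t}I(S,g_{t})^{+}$ is a manifold with boundary $I(S,g_{0})^{+}\cup I(S,g_{1})^{+}$, and by Proposition \ref{projproper} the map $p_{1}\circ\Phi^{+}:W^{+}\to\T(S)$ is proper. This furnishes a proper cobordism between $p_{1}\circ\phi^{+}_{g}$ and $p_{1}\circ\phi^{+}_{g'}$, so their degrees (mod $2$) agree.

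Consequently it suffices to compute the degree for one convenient choice of $g$, and the natural choice is a metric of constant curvature $\K\equiv c<-1$. Here the geometry simplifies dramatically: by the theory of constant-curvature foliations (Corollary 4.7 in \cite{folKsurfaces}) every GHMC $AdS_{3}$ manifold $M$ contains a \emph{unique} past-convex space-like surface of constant curvature $c$. Therefore, given a target left metric $h=p_{1}(\rho)$, an isometric embedding of $(S,g)$ with the prescribed left holonomy corresponds to choosing the right factor $h_{r}\in\T(S)$ and then taking the unique constant-$c$ surface in the associated manifold $M=(h,h_{r})$; the induced metric on that surface is determined by $(h,h_{r})$. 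I would therefore identify the fibre $(p_{1}\circ\phi^{+}_{g})^{-1}(h)$ with the set of $h_{r}\in\T(S)$ for which the unique constant-$c$ surface in $M=(h,h_{r})$ is isometric to $(S,g)$. The plan is to show this fibre consists of a single point, which gives degree $1$ (mod $2$).

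To carry this out I would fix the left metric $h$ and study how the induced metric on the constant-$c$ surface varies as the right metric $h_{r}$ ranges over $\T(S)$. Using the explicit formulas from \cite{SchKra} recalled in Section \ref{parametrization}, namely $h_{l}=I((E+JB)\cdot,(E+JB)\cdot)$ and $h_{r}=I((E-JB)\cdot,(E-JB)\cdot)$, together with the constant-curvature constraint $\det(B)=-c-1$, the induced metric $I$ and the left metric $h_{l}$ together determine $B$ up to the Codazzi and Gauss constraints, and hence determine $h_{r}$. The heart of the computation is an injectivity (and ideally bijectivity) statement: for a fixed constant-curvature metric $g$ on $S$ and fixed left metric $h$, there is exactly one $h_{r}$ realising $g$ as the induced metric of the constant-$c$ surface in $(h,h_{r})$. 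This should follow from the uniqueness part of the Fundamental Theorem (the operator $b$ determines the embedding up to global isometry) combined with the rigidity of the constant-curvature foliation.

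The main obstacle I anticipate is precisely this fibre-counting step: making the correspondence between the right metric $h_{r}$ and the induced metric on the constant-curvature surface both well-defined and injective requires controlling the interaction between the Gauss--Codazzi data and Mess's parameterisation, and in particular ruling out the Fuchsian degeneracy where the two factors coincide. I expect the cleanest route is to appeal to a known realisation result in the constant-curvature case---the intersection $\phi^{+}_{g}(I(S,g)^{+})\cap\phi^{-}_{g'}(I(S,g')^{-})$ is already understood under constant-curvature hypotheses via \cite{landslide2}, and the associated landslide/minimal-surface machinery yields the uniqueness of the right metric. Once the single-point fibre is established at one regular value, the degree equals $1$ (mod $2$), and transport along the cobordism completes the proof for arbitrary $g$.
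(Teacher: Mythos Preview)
Your cobordism reduction to the constant-curvature case is exactly what the paper does, and your justification via Lemma \ref{manifold2} and Proposition \ref{projproper} is correct. The divergence is entirely in the constant-curvature computation.

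The paper does not try to analyse an arbitrary fibre. It singles out one very convenient target left metric, namely $h=-\K g$ (the hyperbolic metric conformal to $g$), and shows directly that $b=\sqrt{-\K-1}\,E$ is the unique preimage, invoking the landslide uniqueness statement of \cite{landslide1} (for each $(h,h')$ there is a unique $h^{*}$ with $L^{1}_{e^{i\theta}}(h,h^{*})=h'$). It then verifies that this $h$ is a \emph{regular} value by computing the derivative of the induced complex structure $\dot J=\frac{2}{\K}[E-\sqrt{-\K-1}J]\dot b$ and appealing to the Fischer--Tromba description of $T_{h}\T(S)$ to see that this is nonzero whenever $\dot b\neq 0$.

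Your approach---identify the fibre over a generic $h$ with the set of right metrics $h_{r}$ whose constant-$c$ leaf is isometric to $g$, and argue that this set is a singleton---is in principle workable, but as written it has two gaps. First, the uniqueness of the constant-$c$ leaf in a given GHMC manifold runs in the wrong direction for what you need: it tells you each $(h,h_{r})$ produces one surface, but you need that for fixed $h$ exactly one $h_{r}$ produces $g$. That is precisely the landslide bijectivity statement, not a consequence of the foliation result alone; your appeal to ``rigidity of the constant-curvature foliation'' and the Fundamental Theorem does not give it. Second, and more seriously, you never check that your chosen $h$ is a regular value. Degree (mod $2$) is the parity of the fibre over a regular value; exhibiting a single-point fibre somewhere is not enough without knowing the differential is an isomorphism there. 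The paper's explicit computation at $b=\sqrt{-\K-1}E$ is not decorative---it is what makes the degree count valid. You should either carry out an analogous linearisation, or follow the paper and work at the special point where the computation is clean.
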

\begin{proof}Consider a path of metrics $(g_{t})_{t\in [0,1]}$ with curvature less than $-1$ connecting $g=g_{0}$ with a metric of constant curvature $g_{1}$. By Corollary \ref{properdef} and Corollary \ref{properdefproj}, the maps $p_{1}\circ\phi_{g_{0}}^{+}:I(S,g_{0})^{+}\rightarrow \T(S)$ and $p_{1}\circ\phi^{+}_{g_{1}}:I(S,g_{1})^{+}\rightarrow \T(S)$ are proper and cobordant, hence they have the same degree (mod $2$). (This follows from Remark \ref{defdegree}, Proposition \ref{invcobgen} and Proposition \ref{projproper}). Thus, we can suppose that $g$ has constant curvature $\K<-1$. \\
\indent We notice that there exists a unique element in $I(S,g)^{+}$ such that $h_{l}(g, b)=-\K g$: a direct computation shows that $b=\sqrt{-\K-1}E$ works and uniqueness follows by the theory of landslides developed in \cite{landslide1}. We sketch here the argument and we invite the interested reader to consult the aforementioned paper for more details. Pick $\theta \in (0,\pi)$ such that $\K=-\frac{1}{\cos^{2}(\theta/2)}$. The landslide
	\begin{align*}
	L_{e^{i\theta}}^{1}:\T(S)\times \T(S) &\rightarrow \T(S)\\
		(h,h^{*}) &\mapsto h'
	\end{align*}
associates to a couple of hyperbolic metrics $(h,h^{*})$, the left metric of a GHMC $AdS_{3}$ manifold containing a space-like embedded surface with induced first fundamental form $I=\cos^{2}(\theta/2)h$ and third fundamental form $III=\sin^{2}(\theta/2)h^{*}$. It has been proved (Theorem 1.14 in \cite{landslide1}) that for every $(h,h')\in \dT$, there exists a unique $h^{*}$ such that $L_{e^{i\theta}}^{1}(h,h^{*})=h'$. Moreover, the shape operator $b$ of the embedded surface can be recovered by the formula (Lemma 1.9 in \cite{landslide1})
\[
	b=\tan(\theta/2)B
\]
where $B:TS \rightarrow TS$ is the unique $h$-self-adjoint operator such that $h^{*}=h(B\cdot, B\cdot)$. Therefore, if we choose $h=h'=-\K g$, the uniqueness of the operator $b$ follows by the uniqueness of $h^{*}$ and $B$. \\
\indent Hence, the degree (mod $2$) of the map is $1$, provided $-\K g$ is a regular value. Let $\dot{b}\in T_{b}I(S,g)^{+}$ be a non-trivial tangent vector. We remark that, since elements of $I(S,g)^{+}$ are $g$-self-adjoint, Codazzi tensor of determinant $-1-\K$, the tangent space $T_{b}I(S,g)^{+}$ can be identified with the space of traceless, Codazzi, $g$-self-adjoint tensors. We are going to prove that the deformation induced on the left metric is non-trivial, as well. Let $b_{t}$ be a path in $I(S,g)^{+}$ such that $b_{0}=b=\sqrt{-\K-1}E$ and $\frac{d}{dt}b_{t}=\dot{b}$ at $t=0$. The complex structures induced on $S$ by the metrics $h_{l}(g, b_{t})$ are 
\[
	J_{t}=(E+Jb_{t})^{-1}J(E+Jb_{t})
\]
where $J$ is the complex structure induced by $g$. Taking the derivative of this expression at $t=0$ we get
\[
	\dot{J}=\frac{2}{\K}[E-\sqrt{-\K-1}J]\dot{b}
\]
which is non-trivial in $T_{-\K g}\T(S)$ because, as explained in Theorem 1.2 of \cite{Tromba}, the space of traceless and Codazzi operators in $T_{J}\mathpzc{A}$ has trivial intersection with the kernel of the differential of the projection $\pi: \mathpzc{A} \rightarrow \T(S)$, which sends a complex structure $J$ to its isotopy class. 
\end{proof}

In particular, for every smooth metric $g$ on $S$ with curvature less than $-1$, the map $p_{1}\circ\phi^{+}_{g}: I(S,g)^{+}\rightarrow \T(S)$ is surjective (a proper, non-surjective map has vanishing degree (mod $2$)) and we deduce Proposition \ref{halfhol}.

\section{Proof of the main result}\label{proofmainthm} 
 We have now all the ingredients to prove Theorem \ref{mainteo}. As outlined in the Introduction, the first step consists of verifying that in one particular case, i.e. when we choose the metrics $g'_{+}=-\frac{1}{\K}h$ and $g'_{-}=-\frac{1}{\K^{*}}h$, where $h$ is any hyperbolic metric and $\K^{*}=-\frac{\K}{\K+1}=\K=-2$, the maps $\phi^{+}_{g'_{+}}$ and $\phi^{-}_{g'_{-}}$ have a unique transverse intersection. \\
\indent It is a standard computation to verify that $b^{+}=E$ and $b^{-}=-E$ are Codazzi operators corresponding to an isometric embedding of $(S, g'_{+})$ as a past-convex space-like surface and to an isometric embedding of $(S, g'_{-})$ as a future-convex space-like surface respectively into the GHMC $AdS_{3}$ manifold $M$ parametrised by $(h,h) \in \dT$. This manifold $M$ is unique due to the following:

\begin{teo}[Theorem 1.15 in \cite{landslide2}]Let $h_{+}$ and $h'_{-}$ be hyperbolic metrics and let $\K_{+}$ and $\K_{-}$ be real numbers less than $-1$. There exists a GHMC $AdS_{3}$ manifold $M$ which contains an embedded future-convex space-like surface with induced metric $\frac{1}{|\K_{-}|}h_{-}$ and an embedded past-convex space-like surface with induced metric $\frac{1}{|\K_{+}|}h_{+}$. Moreover, if $\K_{+}=-\frac{\K_{-}}{\K_{-}+1}$, then $M$ is unique. 
\end{teo}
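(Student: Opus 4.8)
\emph{Plan.} The plan is to combine the constant-curvature foliation of GHMC $AdS_{3}$ manifolds with the landslide parametrisation recalled above, turning the statement into a surjectivity (resp.\ bijectivity) question for a single map $F:\dT\to\dT$. Fix $\theta_{\pm}\in(0,\pi)$ by $\K_{\pm}=-1/\cos^{2}(\theta_{\pm}/2)$, so that a space-like surface of constant curvature $\K_{\pm}$ has induced metric $\tfrac{1}{|\K_{\pm}|}h_{\pm}=\cos^{2}(\theta_{\pm}/2)\,h_{\pm}$. By Corollary 4.7 of \cite{folKsurfaces} every GHMC manifold $(h_{l},h_{r})\in\dT$ is foliated by constant-curvature surfaces and contains a unique past-convex leaf of curvature $\K_{+}$ and a unique future-convex leaf of curvature $\K_{-}$; rescaling their induced metrics to curvature $-1$ defines
\[
	F:\dT\to\dT,\qquad (h_{l},h_{r})\mapsto(\hat h_{+},\hat h_{-}).
\]
Existence of $M$ for the data $(h_{+},h_{-})$ is then equivalent to $(h_{+},h_{-})\in\Ima(F)$, and uniqueness to the injectivity of $F$ at that point.

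\emph{The dual case.} I would treat first the distinguished locus $\K_{+}=-\K_{-}/(\K_{-}+1)$, which by Lemma \ref{duality}(iii) is exactly $\theta_{+}+\theta_{-}=\pi$, equivalently $\sin^{2}(\theta_{-}/2)=\cos^{2}(\theta_{+}/2)$. Here the two prescribed surfaces are forced to be dual: by Lemma \ref{duality} the dual $\Sigma^{*}$ of the future-convex constant-curvature $\K_{-}$ leaf $\Sigma$ is a past-convex surface of constant curvature $\K_{-}^{*}=\K_{+}$, hence by uniqueness of the leaves it \emph{is} the past-convex $\K_{+}$ leaf of the same manifold, and its induced metric equals the third fundamental form of $\Sigma$. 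Prescribing both boundary metrics thus amounts to prescribing $I=\cos^{2}(\theta_{-}/2)\,h_{-}$ and $III=\sin^{2}(\theta_{-}/2)\,h_{+}$ on the single surface $\Sigma$. By the landslide parametrisation such a $\Sigma$ lies in the manifold whose left and right metrics are the two landslides $L^{1}_{e^{\pm i\theta_{-}}}(h_{-},h_{+})$, which exists and is completely determined by $(h_{-},h_{+})$; existence and uniqueness in this case both reduce to the uniqueness of $h^{*}$ in Theorem 1.14 of \cite{landslide1}, so $F$ is a bijection on the dual locus.

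\emph{General existence.} For general $\K_{\pm}$ the two leaves are independent and $F$ need not be injective, so I would obtain surjectivity by a degree argument. First, $F$ is proper: if $(h_{l},h_{r})$ leaves every compact set of $\dT$, then, arguing as in Proposition \ref{compact} via the length-spectrum comparisons of Lemma \ref{stimametriche}, Lemma \ref{compactmetric} and the convex-core estimate Lemma \ref{stimalaminazioni}, the induced metrics on the constant-curvature leaves diverge as well, so $F(h_{l},h_{r})$ escapes every compact set. Letting $(\theta_{+},\theta_{-})$ vary over a compact path in $(0,\pi)^{2}$ yields a proper homotopy between the maps $F_{\theta_{+},\theta_{-}}$, so their degree mod $2$ is constant by the cobordism invariance of Proposition \ref{invcobgen} and Remark \ref{defdegree}. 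Deforming $(\theta_{+},\theta_{-})$ to the dual locus, where $F$ is a bijection and hence of degree $1$, gives $\deg_{2}F=1$ for all admissible $\K_{\pm}$; therefore $F$ is surjective and $M$ exists for every $(h_{+},h_{-})$.

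\emph{Main obstacle.} The genuinely delicate step is the properness of $F$, i.e.\ ruling out the escape of the constant-curvature leaves, or equivalently of the left/right metrics, to the boundary of Teichm\"uller space; this is precisely where all the $AdS_{3}$ and Teichm\"uller-theoretic input of Section \ref{studymaps} is concentrated. The second subtle point, needed to upgrade the dual case from ``degree $1$'' to honest uniqueness, is the injectivity of the landslide $h^{*}\mapsto L^{1}_{e^{i\theta}}(h,h^{*})$ (Theorem 1.14 of \cite{landslide1}), the smooth analogue of the uniqueness in Thurston's earthquake theorem; this strict monotonicity cannot be replaced by a soft degree count.
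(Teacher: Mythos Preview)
The paper does not prove this statement: it is quoted verbatim as Theorem~1.15 of \cite{landslide2} and used as a black box in Section~\ref{proofmainthm}. So there is no ``paper's own proof'' to compare against; what you have written is a proof sketch for a result the present paper merely imports.

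That said, your sketch is coherent and lines up well with how \cite{landslide2} actually proceeds. Your reduction in the dual case is correct: when $\K_{+}=-\K_{-}/(\K_{-}+1)$, Lemma~\ref{duality}(iii) together with the uniqueness of the constant-curvature leaves forces the two prescribed surfaces to be mutually dual, so prescribing both induced metrics is the same as prescribing $I$ and $III$ on a single leaf; the landslide formulas then give $(h_{l},h_{r})$ explicitly from $(h_{-},h_{+})$, and conversely any $M$ realising the data must have these left/right metrics, which is exactly uniqueness. Your general existence via a degree argument on the map $F:(h_{l},h_{r})\mapsto(\hat h_{+},\hat h_{-})$ is also the right shape, and you correctly identify properness of $F$ as the substantive analytic input. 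One small point worth making explicit: for the degree computation on the dual locus you use that $F$ is a bijection there; strictly speaking you should also check that this bijection is transverse (i.e.\ a local diffeomorphism) at least at one point, so that the mod~$2$ degree is honestly $1$ rather than relying on a set-theoretic count. This follows from the smoothness of the landslide flow and the explicit description of its inverse, but it deserves a sentence.
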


We notice that $M$ is Fuchsian, i.e. it is parametrised by a couple of isotopic metrics in Teichm\"uller space. A priori, there might be other isometric embeddings of $(S, g'_{+})$ as a past-convex space-like surface and of $(S, g'_{-})$ as a future-convex space-like surface into $M$ not equivalent to the ones found before. Actually, this is not the case due to the following result about isometric embeddings of convex surfaces into Fuchsian Lorentzian manifolds:

\begin{teo}[Theorem 1.1 in \cite{casofuchsiano}]\label{unicità} Let $(S,g)$ be a Riemannian surface of genus $\tau\geq 2$ with curvature strictly smaller than $-1$. Let $x_{0} \in \tilde{AdS}_{3}$ be a fixed point. There exists an equivariant isometric embedding $(f, \rho)$ of $(S,g)$ into $\tilde{AdS}_{3}$ such that $\rho$ is a representation of the fundamental group of $S$ into the group $\Isom(\tilde{AdS}_{3}, x_{0})$ of isometries of $\tilde{AdS}_{3}$ fixing $x_{0}$. Such an embedding is unique modulo $\Isom(\tilde{AdS}_{3}, x_{0})$.
\end{teo}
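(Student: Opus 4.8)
The plan is to reduce the problem to a single scalar elliptic PDE on the closed surface $S$, and then treat existence and uniqueness separately.

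First I would describe the geometry of the region of $\tilde{AdS}_{3}$ adapted to $G=\Isom(\tilde{AdS}_{3},x_{0})$. This group preserves the totally geodesic space-like plane $P_{0}$ dual to $x_{0}$ (Lemma \ref{duality}), and its identity component is isomorphic to $\Isom^{+}(\h^{2})$, acting on $P_{0}\cong \h^{2}$ as its full orientation-preserving isometry group. The time-like geodesics emanating from $x_{0}$ foliate a neighbourhood of $P_{0}$, and in the associated Gaussian coordinates the metric becomes the Lorentzian warped product
\[
	-d\tau^{2}+\cos^{2}(\tau)\,g_{\h^{2}} \ , \qquad \tau \in (-\pi/2,\pi/2) \ ,
\]
with $G$ acting on the $\h^{2}$-factor and $\{\tau=0\}$ equal to $P_{0}$. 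A $\rho$-equivariant space-like embedding with $\rho(\pi_{1}(S))\subset G$ is then exactly the graph $\tau=u(p)$ of a function $u$ equivariant for a Fuchsian representation $\bar{\rho}\colon \pi_{1}(S)\to \Isom^{+}(\h^{2})$, i.e. a function on the closed quotient surface.

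Next comes the reduction step. The induced metric of such a graph is $I=\cos^{2}(u)\,g_{\h^{2}}-du\otimes du$. Imposing $I=g$ gives $g_{\h^{2}}=\sec^{2}(u)\,(g+du\otimes du)$, so that the hyperbolic metric $g_{\h^{2}}$, and hence the representation $\bar{\rho}$ up to conjugacy, is \emph{determined} by $u$ rather than being an independent unknown. The only remaining condition is that $g_{\h^{2}}$ be hyperbolic, which becomes the quasilinear elliptic equation
\[
	\mathrm{curv}\big(\sec^{2}(u)\,(g+du\otimes du)\big)=-1
\]
for a function $u\colon S\to(-\pi/2,\pi/2)$. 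This is the Gauss equation of the situation: the hypothesis $\kappa<-1$ is precisely what makes the associated shape operator strictly definite, so that solutions correspond to strictly convex space-like graphs. Fixing $x_{0}$ together with a frame rigidifies $G$, and unravelling this correspondence shows that equivariant embeddings modulo $G$ are in bijection with solutions $u$ of the displayed PDE.

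I would then establish existence by a continuity method, connecting $g$ to a metric $g_{1}$ of constant curvature through a path $g_{s}$ with curvature $\kappa<-1$; at the constant-curvature endpoint the solution is the explicit umbilic slice $u\equiv\mathrm{const}$ (the surface is a $G$-orbit). Openness along the path follows from ellipticity and the implicit function theorem once the linearised operator is seen to be invertible, while closedness requires a priori $C^{0}$, $C^{1}$ and $C^{2}$ estimates: the $C^{0}$ bound must keep $u$ away from $\pm\pi/2$, and the higher estimates follow from convexity and the curvature pinching. The compactness of families of equivariant isometric embeddings from Corollary \ref{convergenza} can be used to package these estimates and to ensure the solution set does not escape as $s$ varies; this is where I expect the main analytic difficulty to lie. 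Finally, uniqueness follows from a maximum-principle argument: given two solutions $u_{1},u_{2}$, one compares them at an extremum of $u_{1}-u_{2}$, and the strict monotonicity of the warping factor $\cos^{2}(u)$ in $|u|$ endows the operator with the monotonicity needed to force $u_{1}=u_{2}$ there, whence $u_{1}\equiv u_{2}$ by the strong maximum principle. Geometrically this is the analogue of the Lorentzian Maximum Principle used in Section \ref{studymaps}: two convex space-like graphs with the same induced metric and the same Fuchsian holonomy admit no interior first contact unless they coincide. Since $u$ determines $g_{\h^{2}}$ and hence $\bar{\rho}$, uniqueness of $u$ yields uniqueness of the embedding modulo $\Isom(\tilde{AdS}_{3},x_{0})$, as claimed.
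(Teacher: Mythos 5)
A preliminary remark: the paper never proves this statement. It is quoted, with attribution, as Theorem 1.1 of \cite{casofuchsiano} and used as a black box in Section \ref{proofmainthm} to identify the unique point of $(\phi^{+}_{g'_{+}}\times \phi^{-}_{g'_{-}})^{-1}(\Delta)$; so your proposal can only be compared with the proof of Labourie and Schlenker. Their argument is a continuity/deformation argument whose two pillars are \emph{infinitesimal rigidity} of Fuchsian convex embeddings (injectivity of the linearised problem, obtained by a global argument of the same nature as the proof of Lemma \ref{manifold} in this paper, via positive intersections of pseudo-holomorphic curves) and properness; existence and uniqueness then follow \emph{together} from a monodromy argument over the connected, simply connected space of metrics of curvature $<-1$, anchored at the constant-curvature metrics, where the umbilic solution is explicit. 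Your reduction itself is correct as far as it goes: the Fermi form $-d\tau^{2}+\cos^{2}(\tau)\,g_{\mathbb{H}^{2}}$, the induced metric $\cos^{2}(u)\,g_{\mathbb{H}^{2}}-du\otimes du$ of a graph, and the observation that $g_{\mathbb{H}^{2}}=\sec^{2}(u)(g+du\otimes du)$ is determined by $u$ are all right, and your continuity scheme for existence matches theirs in spirit.

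The genuine gap is the uniqueness step, and it cannot be repaired as stated. The operator $F(u):=\mathrm{curv}(\sec^{2}(u)(g+du\otimes du))+1$ is invariant under $u\mapsto -u$, so solutions come in pairs $\{u,-u\}$, both nontrivial because $u\equiv 0$ would force $\kappa\equiv -1$; geometrically these are the convex realisations on the two sides of the dual plane, exchanged by the reflection in that plane, an isometry which fixes the plane pointwise but does \emph{not} fix $x_{0}$ in $\tilde{AdS}_{3}$. Hence literal uniqueness of solutions of your PDE is false, and no pointwise monotonicity in $u$ can hold on any region containing both solutions; your own phrase ``monotonicity of $\cos^{2}(u)$ in $|u|$'' concedes this, since monotone in $|u|$ is not monotone in $u$. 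Even after restricting to one convexity class, say $u>0$, the structure your comparison needs is absent: writing $h=g+du\otimes du$, the conformal change formula gives $F=\cos^{2}(u)K_{h}-|\nabla u|^{2}_{h}-\sin(u)\cos(u)\Delta_{h}u+1$, so the zeroth-order derivative is $\partial F/\partial u=-\sin(2u)K_{h}-\cos(2u)\Delta_{h}u$, which has no pointwise sign in general (only its integral is controlled at a solution); so the evaluation at an extremum of $u_{1}-u_{2}$ does not close. This is not an accident: your geometric paraphrase conflates the maximum principle for $K$-surfaces used in Section \ref{studymaps}, where the curvature is prescribed as a function of the \emph{ambient} point and two tangent surfaces are compared at the contact point, with the isometric embedding problem, where a tangency $f_{1}(p_{1})=f_{2}(p_{2})$ compares $\kappa(p_{1})$ with $\kappa(p_{2})$, two unrelated values. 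Finally, the same missing ingredient undermines existence as well: openness in your continuity method requires invertibility of the linearisation, and since that operator is elliptic of index $0$ on a closed surface, invertibility is \emph{equivalent} to the infinitesimal rigidity you defer in the clause ``once the linearised operator is seen to be invertible''. Both halves of your argument thus funnel into the one statement that is never proved, and that statement is the actual content of the theorem. (Two smaller points you would also need: that every equivariant embedding is a graph over some lift of the dual plane, which is the domain-of-dependence argument of Mess; and an analysis of how the pairs $\{u,-u\}$ over the countably many lifts of the dual plane in $\tilde{AdS}_{3}$ are identified by $\Isom(\tilde{AdS}_{3},x_{0})$, which is exactly where the precise meaning of the uniqueness clause is decided.)
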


 As a consequence, if we denote with $\Delta$ the diagonal of $\T(S)^{2}\times \T(S)^{2}$, we have proved that
\[
 (\phi^{+}_{g'_{+}}\times \phi^{-}_{g'_{-}})^{-1}(\Delta)=(E, -E)\in I(S,g'_{+})^{+}\times I(S,g'_{-})^{-} \ .
\]

We need to verify next that at this point the intersection 
\[
	\phi^{+}_{g'_{+}}(I(S,g'_{+})^{+})\cap \phi^{-}_{g'_{-}}(I(S,g'_{-})^{-})
\]
is transverse. Suppose by contradiction that the intersection is not transverse, then there exists a non-trivial tangent vector $\dot{b}^{+} \in T_{E}I(S,g'_{+})^{+}$ and a non-trivial tangent vector $\dot{b}^{-} \in T_{-E}I(S,g'_{-})^{-}$ such that 
\[
	d\phi^{+}_{g'_{+}}(\dot{b}^{+})=d\phi^{-}_{g'_{-}}(\dot{b}^{-})\in T_{h}\T(S) \times T_{h}\T(S) \ .
\]
We recall that elements of $T_{E}I(S,g'_{+})^{+}$ can be represented by traceless, $g'_{+}$-self-adjoint, Codazzi operators. With this in mind, let us compute explicitly $d\phi^{+}_{g'_{+}}(\dot{b}^{+})$. Let $b_{t}^{+}$ be a smooth path in $I(S,g'_{+})^{+}$ such that $b^{+}_{0}=E$ and $\frac{d}{dt}_{|_{t=0}}b^{+}_{t}=\dot{b}^{+}\ne 0$. The complex structures induced on $S$ by the left metrics $h_{l}(b_{t})$ are
\[
	J_{l}^{+}=(E+Jb_{t}^{+})^{-1}J(E+Jb_{t}^{+}) \ ,
\]
where $J$ is the complex structure of $(S, g'_{+})$. We compute now the derivative of this expression at $t=0$. First notice that, since the operators $b_{t}$ are $g_{+}'$-self-adjoint, $Jb_{t}^{+}$ is traceless, hence the Hamilton-Cayley equation reduces to $(Jb_{t}^{+})^{2}+\det(Jb_{t}^{+})E=(Jb_{t}^{+})^{2}+E=0$. We deduce that
\[
	(E+Jb_{t}^{+})(E-Jb_{t}^{+})=2E \ .
\]
Therefore, the variation of the complex structures induced by the left metrics is 
	\begin{align*}
	\dot{J}_{l}^{+}&=\frac{d}{dt}_{|_{t=0}}J_{l}^{+}=\frac{d}{dt}_{|_{t=0}}\frac{1}{2}(E-Jb_{t}^{+})J(E+Jb_{t}^{+})\\
		&=\frac{1}{2}(-J\dot{b}^{+})J(E+J)+\frac{1}{2}(E-J)J^{2}\dot{b}^{+}\\
		&=-(E-J)\dot{b}^{+}
	\end{align*}
where, in the last passage we used the fact that, since $\dot{b}^{+}$ is traceless and symmetric, the relation $J\dot{b}^{+}=-\dot{b}^{+}J$ holds. \\
With a similar procedure we compute the variation of the complex structures of the right metrics and we obtain
\[
	\dot{J}_{r}^{+}=(E+J)\dot{b}^{+} \in T_{J}\mathpzc{A} \ .
\]
Noticing that $\dot{J}_{l}^{+}$ and $\dot{J}_{r}^{+}$ are both traceless Codazzi operators, the image of $\dot{b}^{+}$ under the differential $d\phi^{+}_{g'_{+}}$ is simply
\[
	d\phi^{+}_{g'_{+}}(\dot{b}^{+})=(-(E-J)\dot{b}^{+}, (E+J)\dot{b}^{+})\in T_{h}\T(S)\times T_{h}\T(S)
\]
because, as explained in Theorem 1.2 of \cite{Tromba}, the space of traceless and Codazzi operators in $T_{J}\mathpzc{A}$ is in direct sum with the kernel of the differential of the projection $\pi: \mathpzc{A} \rightarrow \T(S)$, which sends a complex structure $J$ to its isotopy class and gives an isomorphism between the space of traceless, Codazzi, self-adjoint tensors and $T_{h}\T(S)$. \\
With a similar reasoning we obtain that 
\[
	d\phi^{-}_{g'_{-}}(\dot{b}^{-})=(-(E+J)\dot{b}^{-}, (E-J)\dot{b}^{-}) \in  T_{h}\T(S)\times T_{h}\T(S) \ .
\]
By imposing that $d\phi^{+}_{g'_{+}}(\dot{b}^{+})=d\phi^{-}_{g'_{-}}(\dot{b}^{-})$ we obtain the linear system
\[
	\begin{cases}
	(-E+J)\dot{b}^{+}=-(E+J)\dot{b}^{-} \\
	(E+J)\dot{b}^{+}=(E-J)\dot{b}^{-}
	\end{cases}
\]
which has solutions if and only if $\dot{b}^{+}=\dot{b}^{-}=0$. Therefore, the intersection is transverse and we can finally state that 
\[
	\Im(\phi^{+}_{g'_{+}}, \phi_{g'_{-}}^{-})=1 \ .
\]
\\
\indent Now we use the theory described in Section \ref{intersectiontheory} to prove that an intersection persists under a deformation of one metric that fixes the other. Let $g_{+}$ and $g_{-}$ be two arbitrary metrics on $S$ with curvature less than $-1$. We will still denote with $g'_{+}$ and with $g'_{-}$ the metrics introduced in the previous paragraph with self-dual constant curvature and in the same conformal class. Consider two paths of metrics $\{g_{+}^{t}\}_{t \in [0,1]}$ and $\{g_{-}^{t}\}_{t \in [0,1]}$ with curvature less than $-1$ such that $g_{+}^{0}=g_{+}$, $g_{+}^{1}=g'_{+}$, $g_{-}^{0}=g_{-}$ and $g_{-}^{1}=g'_{-}$. 
We will first prove that 
\[
	\phi_{g_{+}}^{+}(I(S,g_{+})^{+}) \cap \phi^{-}_{g'_{-}}(I(S,g'_{-})^{-}) \ne \emptyset \ .
\]
Suppose by contradiction that this intersection is empty. Then the map $\phi_{g_{+}}^{+} \times \phi_{g'_{-}}^{-}$ is trivially transverse to $\Delta$. Consider the manifold
\[
	W^{+}=\bigcup_{t \in [0,1]} I(S, g^{+}_{t})^{+}
\]
and the map 
\[
	\Phi^{+}\times \phi_{g'_{-}}^{-}: X=W^{+} \times I(S, g'_{-})^{-} \rightarrow (\T(S))^{4}=Y
\]
as defined in Section \ref{studymaps}. By assumption the restriction of $\Phi^{+}\times \phi_{g'_{-}}^{-}$ to the boundary is transverse to $\Delta$ and by Proposition \ref{compact}, the set $D=(\Phi^{+}\times  \phi_{g'_{-}}^{-})^{-1}(\Delta)$ is compact. Let $B$ be the interior of a compact set containing $D$ and let $C=(X \setminus B)\cup \partial X$. By construction, $\Phi^{+}\times \phi_{g_{-}}^{-}$ is transverse to $\Delta$ along the closed set $C$. Applying Theorem \ref{perturb}, there exists a smooth map $\Psi: X \rightarrow Y$ which is transverse to $\Delta$ and which coincides with $\Phi^{+}\times \phi_{g'_{-}}^{-}$ on $C$. In particular, the value on the boundary remains unchanged and $\Psi^{-1}(\Delta)$ is still a compact set. By Proposition \ref{invcob}, the intersection number of the maps
\[
	\phi^{+}_{g'_{+}}: I(S,g'_{+})^{+} \rightarrow \dT \ \ \text{and} \ \  \phi_{g_{+}}^{+}: I(S,g_{+})^{+} \rightarrow \dT
\]
with the map
\[
	\phi_{g'_{-}}^{-}: I(S,g_{-})^{-} \rightarrow \dT \ ,
\] 
as defined in Section \ref{intersectiontheory}, must be the same. This gives a contradiction, because 
\[
	0=\Im(\phi^{+}_{g_{+}}, \phi_{g'_{-}}^{-})\ne \Im(\phi^{+}_{g'_{+}}, \phi_{g'_{-}}^{-})=1 \ .
\]
\indent So we have proved that $\phi_{g_{+}}^{+}(I(S,g_{+})^{+}) \cap \phi^{-}_{g'_{-}}(I(S,g'_{-})^{-}) \ne \emptyset$, but we do not know if the intersection is transverse. Repeating the above argument choosing the closed set $C=(X\setminus B)\cup I(S,g'_{+})^{+}$, we obtain that a perturbation $\psi$ of $\phi_{g_{+}}^{+}\times \phi^{-}_{g'_{-}}$ which is transverse to $\Delta$ and coincides with $\phi_{g_{+}}^{+}\times \phi^{-}_{g'_{-}}$ outside the interior of a compact set containing $(\phi_{g_{+}}^{+}\times \phi^{-}_{g'_{-}})^{-1}(\Delta)$ has intersection number $\Im(\psi,\Delta)=1$. By Proposition \ref{invpert}, every perturbation of the map $\phi_{g_{+}}^{+}\times \phi^{-}_{g'_{-}}$ obtained in this way has intersection number with $\Delta$ equal to $1$. \\
\\
\indent This enables us to deform the metric $g_{-}$ without losing the intersection, by repeating a similar argument. Suppose by contradiction that 
\[
	\phi_{g_{+}}^{+}(I(S,g_{+})^{+}) \cap \phi^{-}_{g_{-}}(I(S,g_{-})^{-}) = \emptyset \ .
\]
Consider the manifold
\[
	W^{-}=\bigcup_{t \in [0,1]} I(S, g^{-}_{t})^{-}
\]
and the map 
\[
	\Phi^{-}\times \phi_{g_{+}}^{+}: X=W^{-} \times I(S, g_{+})^{+} \rightarrow (\T(S))^{4}=Y \ .
\]
By assumption the restriction of the map $\Phi^{-}\times \phi_{g_{+}}^{+}$ to the first boundary component $X_{0}=I(S,g_{+})^{+}\times I(S,g_{-})^{-}$ is transverse to $\Delta$ and by Proposition \ref{compact}, the pre-image $D=(\Phi^{-}\times  \phi_{g_{+}}^{+})^{-1}(\Delta)$ is a compact set. Let $B$ be the interior of a compact set containing $D$ and let $C=(X \setminus B)\cup X_{0}$. By construction, $\Phi^{-}\times \phi_{g_{+}}^{+}$ is transverse to $\Delta$ along the closed set $C$. Applying Theorem \ref{perturb}, there exists a smooth map $\Psi: X \rightarrow Y$ which is transverse to $\Delta$ and which coincides with $\Phi^{-}\times \phi_{g_{+}}^{+}$ on $C$. In particular, the value on the boundary $X_{0}$ remains unchanged and $\Psi^{-1}(\Delta)$ is still a compact set. Moreover, the value of $\Psi$ on the other boundary component is a perturbation of $\phi_{g_{+}}^{+}\times \phi^{-}_{g'_{-}}$ which is transverse to $\Delta$ and coincides with $\phi_{g_{+}}^{+}\times \phi^{-}_{g'_{-}}$ outside the interior of a compact set containing $(\phi_{g_{+}}^{+}\times \phi^{-}_{g'_{-}})^{-1}(\Delta)$. Hence, by Proposition \ref{invpert} and by Proposition \ref{invcob}, the intersection number $\Im(\phi^{+}_{g_{+}}, \phi_{g_{-}}^{-})$ must be equal to $1$, thus giving a contradiction.

\begin{oss}\label{unique?} It might be possible to prove the uniqueness of this intersection by applying Proposition \ref{unico}. To this aim, it would be necessary to show that for every couple of metrics $g_{+}$ and $g_{-}$ with curvature strictly smaller than $-1$, the functions	$\phi_{g_{+}}^{+}: I(S,g_{+})^{+} \rightarrow \dT$ and $\phi_{g_{-}}^{-}: I(S,g_{-})^{-} \rightarrow \dT$ are transverse. 
\end{oss}

\section*{Acknowledgment}
 This is a part of a PhD project I have been doing under the supervision of Prof. Jean-Marc Schlenker. I would like to thank him for his patient guidance and invaluable support throughout this work. I am also grateful to Nicolas Tholozan and Daniel Monclair for our fruitful conversations about the subject.

\bigskip

\noindent \footnotesize \textsc{DEPARTMENT OF MATHEMATICS, UNIVERSITY OF LUXEMBOURG}\\
\emph{E-mail address:}  \verb|andrea.tamburelli@uni.lu|

\end{document}